\documentclass[12pt]{article} 

\usepackage{amsmath, amssymb}
\usepackage{amsthm} 
\usepackage{url}
\usepackage{graphicx}
\usepackage{here}
\usepackage{mathrsfs}
\usepackage{comment}
\usepackage{fancyhdr}
\usepackage{enumitem}
\usepackage{tikz} 
\usetikzlibrary{intersections, calc, through}

\usepackage{sectsty}
\allsectionsfont{\normalfont}

\usepackage[
  bookmarks=true,
  bookmarksdepth=3,
  hyperfootnotes=false,
  bookmarksnumbered=true,
  bookmarksopen=true,
  colorlinks=true,
  linkcolor=blue,
  citecolor=red,
  pdftitle={Noncommutative Floquet--Bloch Theory for Nilpotent Groups: Representation-Theoretic Foundations},
  pdfsubject={Representation-theoretic foundations of noncommutative Floquet--Bloch theory for nilpotent lattices},
  pdfauthor={Atsushi Katsuda},
  pdfkeywords={Floquet--Bloch theory, nilpotent lattices, Kirillov orbit method, finite-dimensional representations, Pytlik formula, Harper operator}
]{hyperref}

\theoremstyle{plain} 
\newtheorem{theorem}{\indent\sc Theorem}[section]
\newtheorem{lemma}[theorem]{\indent\sc Lemma}

\newtheorem{proposition}[theorem]{\indent\sc Proposition}

\theoremstyle{definition} 
\newtheorem{definition}[theorem]{\indent\sc Definition}
\newtheorem{remark}[theorem]{\indent\sc Remark}
\newtheorem{example}[theorem]{\indent\sc Example}

\newtheorem{definitiontheorem}[theorem]{\indent\sc Definition-Theorem}

\newcommand{\Qodd}{\mathbb Q^{\mathrm{odd}}}
\newcommand{\widehatGamma}{\widehat{\Gamma}}

\newcommand{\Heis}{\mathrm{Heis}}

\makeatletter
    
    \@addtoreset{equation}{section}

\def\address#1#2{\begingroup
\noindent\parbox[t]{7.8cm}{%
\small{\scshape\ignorespaces#1}\par\vskip1ex
\noindent\small{\itshape E-mail address}%
\/: #2\par\vskip4ex}\hfill%
\endgroup}%

\makeatother
\title{Noncommutative Floquet--Bloch Theory for Nilpotent Groups: Representation-Theoretic Foundations} 
\author{%
Atsushi Katsuda$^{*}$\\
\emph{Dedicated to Professor Toshikazu Sunada}
}
\date{July 17, 2026}

\providecommand{\R}{\mathbb R}
\providecommand{\Z}{\mathbb Z}
\providecommand{\Q}{\mathbb Q}

\providecommand{\g}{\mathfrak g}
\providecommand{\m}{\mathfrak m}

\providecommand{\rL}{\mathfrak r}

\providecommand{\ee}{\mathrm e}
\providecommand{\ii}{\mathrm i}
\providecommand{\restr}{\big|}
\providecommand{\Ad}{\operatorname{Ad}}
\providecommand{\Ind}{\operatorname{Ind}}
\providecommand{\Tr}{\operatorname{Tr}}

\newtheorem*{theoremA}{\indent\sc Theorem A}
\newtheorem*{theoremB}{\indent\sc Theorem B}
\newtheorem*{theoremC}{\indent\sc Theorem C}

\begin{document}
\markright{Floquet--Bloch Decomposition for Nilpotent Groups}
\pagestyle{myheadings}
\maketitle

\begin{abstract}
Classical Floquet--Bloch theory decomposes abelian periodic problems
over the character torus of the lattice.  For nonabelian nilpotent
lattices, the non--type~I obstruction rules out a comparable
parametrization of the full unitary dual.  We do not attempt to remove
this obstruction.  Instead, we construct an exact Bloch-type replacement
on the representation-theoretic part of the theory which is visible from
rational Kirillov data and from finite-dimensional rational fibers.

Let $\Gamma$ be a torsion-free finitely generated nilpotent group and
let $G$ be its Malcev completion.  For an irreducible unitary
representation $\pi_l$ of $G$ attached to a rational Kirillov parameter
$l\in\mathfrak{g}_{\mathbb Q}^{*}$, we prove an exact restriction theorem
for $\pi_l|_\Gamma$.  The branching is first described by induced
representations attached to rational polarizations.  On the rational odd locus relevant to finite-dimensional
representations, it further decomposes into finite-dimensional irreducible
representations of $\Gamma$.

On these finite-dimensional rational fibers we construct a positive
finitely additive Plancherel measure.  It gives Fourier inversion and
normalized trace identities for nilpotent lattices, recovering Pytlik's
formula in the discrete Heisenberg case.
\end{abstract}

\footnotetext{2020 \textit{Mathematics Subject Classification}. Primary 22D10; Secondary 22E27, 22D30, 43A65, 43A80, 58J50.}
\footnotetext{\textit{Key words and phrases}. Floquet--Bloch theory, nilpotent lattices, Kirillov orbit method, finite-dimensional representations of nilpotent lattices, finite additivity, Pytlik formula, Harper operator.}
\footnotetext{$^{*}$This work was supported by JSPS KAKENHI Grant Number JP24K06715 and by the Research Institute for Mathematical Sciences, Kyoto University, an International Joint Usage/Research Center.}

\tableofcontents

\section{Introduction}

Classical Floquet--Bloch theory is a fundamental tool in the spectral
analysis of periodic operators.  For a \(\mathbb Z^d\)-periodic problem,
the regular representation of the lattice decomposes into one-dimensional
characters parametrized by the dual torus.  This converts a global problem
into a family of fiberwise problems, and the spectral information is encoded
in band functions.

For nonabelian covering groups this mechanism has no literal analogue.
Even for finitely generated nilpotent groups, the full unitary dual is
generally too singular to be used as a manageable Bloch parameter space.
Glimm's theorem relates standard-Borel classification of irreducible
representations to the type~I property \cite{Glimm}.  Thoma's theorem gives
the corresponding group-theoretic criterion for countable discrete groups:
such a group is of type~I exactly when it contains an abelian subgroup of
finite index \cite{ThomaUnitary}.  Thus the nonabelian torsion--free finitely generated nilpotent groups
considered here are non--type~I, and their full irreducible duals cannot
serve as Bloch parameter spaces comparable with the character torus.  This
obstruction remains fully in force throughout the paper.

The point of the present work is different.  We do not try to diagonalize
the regular representation by resolving the full unitary dual.  Instead, we
use the Malcev completion of the lattice and the Kirillov orbit method on
the Lie group side \cite{CorwinGreenleaf,Kirillov1962,Kirillov2004},
and then restrict the resulting representations back to the lattice.  For
rational Kirillov parameters, the restriction admits an exact branching
formula over a compact torus.  On a rational period subfamily, described using
Howe's classification of finite-dimensional representations of nilpotent
lattices \cite{Howe}, the lattice fibers admit a further finite-dimensional
rational refinement.  Together with a
Pytlik-type finitely additive inversion functional \cite{Pytlik}, these
fibers give an exact noncommutative Floquet--Bloch skeleton for nilpotent
lattices.

It is important to keep three objects separate.  The first restriction
theorem is an ordinary Hilbert direct integral with respect to Lebesgue
measure on a torus.  Its fibers are induced lattice representations and need
not all be finite-dimensional.  The finite-dimensional rational refinement is a further arithmetic
refinement on a rational period locus.  The Pytlik-type
inversion formula is a separate finitely additive trace identity supported
on finite-dimensional representations.  Confusing these three levels would
obscure the role of the non--type~I obstruction and lead to an incorrect
interpretation of the main theorem.

\subsection*{Rational structure and Malcev completion}

Let \(\Gamma\) be a torsion-free finitely generated nilpotent group.  Its
Malcev completion is a connected, simply connected nilpotent Lie group \(G\)
containing \(\Gamma\) as a lattice.  Conversely, by Malcev's theorem, such a
Lie group admits a lattice precisely when its Lie algebra \(\g\) is defined
over \(\Q\); equivalently, \(\g\) has a basis \(X_1,\ldots,X_n\) in which
\[
   [X_i,X_j]=\sum_{k=1}^n c_{ij}^k X_k,\qquad c_{ij}^k\in\Q .
\]
We use this standard form of Malcev theory as recalled, for example, in
\cite[Theorem~5.1.8]{CorwinGreenleaf}.  The lattice determines a rational structure
\(\g_\Q\).  In general, however, \(\log \Gamma\) itself should not be
treated as a Lie algebra or as an additive lattice.  One works instead with
Malcev bases and with the rational span determined by the lattice; this
point is important in the finite-dimensional part of the argument.

For \(l\in\g^*\), let
\[
   \rL_l=\{X\in\g\mid l([X,Y])=0\text{ for all }Y\in\g\}
\]
be the radical of the skew form \(B_l(X,Y)=l([X,Y])\).  A polarization for
\(l\) is a maximal subordinate subalgebra \(\m\subset\g\), i.e. a maximal
Lie subalgebra satisfying \(l([\m,\m])=0\).  If \(l\in\g^*_\Q\), then
\(\rL_l\) is rational and a rational polarization may be chosen
\cite{CorwinGreenleaf,Vergne}.

\subsection*{Main results}

The structural result of the paper is the following restriction theorem.  It
is the result that plays the role of the Bloch decomposition.

\begin{theoremA}[Restriction theorem]
Let \(\Gamma\) be a torsion-free finitely generated nilpotent group, let
\(G\) be its Malcev completion, and let \(l\in\g^*_\Q\).  Let \(\pi_l\) be
the irreducible unitary representation of \(G\) attached to the coadjoint
orbit of \(l\).  Then one can choose a rational polarization and a
lattice-adapted weak Malcev system such that
\[
   \pi_l\restr_\Gamma
   \simeq
   \int_{[0,1)^{n-m}}^\oplus
      \Ind_{M_t\cap\Gamma}^{\Gamma}(\chi_t)\,
      dt_{m+1}\cdots dt_n .
\]
Here \(m=\dim M_t\), the parameter \(t=(t_{m+1},\ldots,t_n)\) belongs to a
compact torus, and the inducing data \((M_t,\chi_t)\) are obtained
explicitly from the Kirillov orbit method.  In the fixed coordinates used in
the proof this is written as
\[
   \pi_l\restr_\Gamma
   \simeq
   \int_{[0,1)^{n-m}}^\oplus
      \Ind_{M\cap\Gamma}^{\Gamma}
      \bigl(\chi_{l_t}\restr_{M\cap\Gamma}\bigr)\,dt,
\]
where
\[
   l_t=\Ad^*(\exp(t_{m+1}X_{m+1}))\cdots
       \Ad^*(\exp(t_nX_n))l,
\]
and the notation includes the harmless polarization transport used in the
non-ideal induction step.
\end{theoremA}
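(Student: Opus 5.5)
The plan is to realize \(\pi_l\) as an induced representation from a rational polarization, then restrict it to \(\Gamma\) with a Mackey-type double-coset analysis in which the double-coset space is a compact torus.

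\textbf{Step 1 (realizing \(\pi_l\)).} Since \(l\in\g^*_\Q\), I choose a rational polarization \(\m\) for \(l\), using Vergne's construction along a rational Jordan--H\"older flag \cite{Vergne,CorwinGreenleaf}. I set \(M=\exp\m\) and \(\chi_l(\exp X)=\ee^{2\pi\ii l(X)}\). Then \(\pi_l\simeq\Ind_M^G\chi_l\).

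\textbf{Step 2 (adapted coordinates).} Because \(\m\) is rational, \(M\cap\Gamma\) is a lattice in \(M\). I pick a weak Malcev basis \(X_1,\ldots,X_n\) of \(\g\) with the following properties: it passes through \(\m\), so that \(X_1,\ldots,X_m\) span \(\m\); it is strongly adapted to \(\Gamma\); and every \(g\in G\) factors uniquely as \(g=\exp(t_1X_1)\cdots\exp(t_nX_n)\). Such bases exist by \cite[Theorem~5.1.6 and \S5.1]{CorwinGreenleaf}. Replacing \(\Gamma\) by a finite-index subgroup is not allowed, so I have to work with a lattice-adapted system. In it, \(\Gamma\) equals the set of products with integer coordinates \(t_i\), and \(M\cap\Gamma\) equals the same set with \(t_{m+1}=\cdots=t_n=0\).

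In these coordinates, \(M\backslash G\) is parametrized by \((t_{m+1},\ldots,t_n)\in\R^{n-m}\). The right action of \(\Gamma\) on \(M\backslash G\) then has the set \(s(t)=\exp(t_{m+1}X_{m+1})\cdots\exp(t_nX_n)\), \(t\in[0,1)^{n-m}\), as a measurable fundamental domain. I will prove this by induction along the flag: at each stage the last coordinate is reduced modulo \(\Z\), since the relevant quotient is one-dimensional and central in the quotient algebra. The corresponding statement is \(G=M\cdot s([0,1)^{n-m})\cdot\Gamma\), and each double coset meets the slice \(s([0,1)^{n-m})\) in exactly one point, up to a null set.

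\textbf{Step 3 (Mackey restriction).} The Mackey subgroup theorem, or directly an explicit unitary map, now gives
\[
\pi_l\restr_\Gamma\simeq\int^\oplus_{[0,1)^{n-m}}\Ind_{s(t)^{-1}Ms(t)\cap\Gamma}^{\Gamma}\bigl(\chi_l^{s(t)}\bigr)\,dt .
\]
The explicit map is built as follows. I take \(f\) in the space of \(\Ind_M^G\chi_l\), that is, functions on \(G\) with \(f(mg)=\chi_l(m)f(g)\) that are square-integrable on \(M\backslash G\), and send it to the family \(\gamma\mapsto f(s(t)\gamma)\). Lebesgue measure in the Malcev coordinates is the invariant measure on \(M\backslash G\), and by Step 2 it decomposes as \(dt\) times counting measure. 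This gives unitarity.

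Next I identify \(s(t)^{-1}Ms(t)=\exp(\Ad(s(t)^{-1})\m)\). This is the polarization for \(l_t=\Ad^*(s(t))l\), and \(\chi_l^{s(t)}=\chi_{l_t}\). So \(M_t=s(t)^{-1}Ms(t)\) and \(\chi_t=\chi_{l_t}\), which gives the first formula of the theorem.

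\textbf{Step 4 (fixed coordinates, and the main obstacle).} The second, fixed-\(M\) form needs more care. In general \(M_t\cap\Gamma\ne M\cap\Gamma\), unless \(\m\) is an ideal, in which case \(M_t=M\) and the formula is immediate. In the non-ideal case I will proceed inductively, following the standard proof of the Kirillov induction step. I pass through a codimension-one ideal \(\g_0\supset\m\). I use a rational element \(X\notin\g_0\), chosen as a lattice generator \(X_n\), to conjugate. For integer translates the polarization \(\Ad(\exp(kX))\m\) is again rational, and the conjugation preserves \(\Gamma\). This lets me transport the inducing data back to \(M\) at the cost of replacing \(l\) by \(l_t\); this is the "harmless polarization transport" the statement alludes to.

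I expect the main obstacle to be checking that the fundamental-domain and measure decomposition of Step 2 is compatible with this transport at every inductive level. Concretely, the integer parts of the coordinates must be absorbed into \(\Gamma\) without changing the measure class, and the measurability of \(t\mapsto\Ind(\chi_{l_t})\) must be maintained. I would handle this by induction on \(\dim\g\), using the \(\Gamma\)-adaptedness of the Malcev basis to keep every quotient lattice integral.
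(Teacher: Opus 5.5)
There is a genuine gap, and it is already in Step~2, not only in Step~4. Your claim that $s([0,1)^{n-m})$ is a fundamental domain for the right action of $\Gamma$ on $M\backslash G$ holds when $\mathfrak m$ is an ideal and the basis is strong. That is the Bekka--Driutti situation, and your ``one-dimensional central quotient'' induction is exactly the proof there.

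It is false in general, and your own Step~1 recipe produces a counterexample. Take $G=N_4$ in the notation of the appendix, $\Gamma=N_4(\mathbb Z)$ and $l=l_1$. Run Vergne's construction along the rational flag of ideals spanned successively by $Z,Y_1,X_1,Y_2,X_2,X_3$. It yields $\mathfrak m'=\mathrm{span}\{Z,Y_1,X_1,X_2\}$, which is not an ideal because $[X_3,X_2]=-Y_2$.

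Write $g=(g_{ij})$ as a unipotent matrix. Then $M'=\{g_{24}=g_{34}=0\}$, and $M'g\mapsto(u,v)=(g_{34},\,g_{24}-g_{23}g_{34})$ identifies $M'\backslash G$ with $\mathbb R^2$. For the weak basis $Z,Y_1,X_1,X_2,Y_2,X_3$, which passes through $\mathfrak m'$ and is strongly based on $\Gamma$, one has $s(t)\mapsto(t_6,t_5)$. An element $\gamma\in\Gamma$ acts by
\[
(u,v)\longmapsto\bigl(u+\gamma_{34},\;v+\gamma_{24}-\gamma_{23}(u+\gamma_{34})\bigr).
\]
Three things follow for irrational $u\in[0,1)$:
\begin{itemize}
\item The orbit of $(u,v)$ meets the slice in the dense set $\{u\}\times\bigl((v+\mathbb Z+u\mathbb Z)\cap[0,1)\bigr)$.
\item The stabilizer $s(t)^{-1}M's(t)\cap\Gamma=\{\gamma:\gamma_{23}=\gamma_{24}=\gamma_{34}=0\}$ has rank three, so it is not a lattice in the four-dimensional group $s(t)^{-1}M's(t)$.
\item Your map $f\mapsto(\gamma\mapsto f(s(t)\gamma))$ is not even bounded: $\int_{[0,1)^2}\sum_{\Gamma_t\backslash\Gamma}|f(s(t)\gamma)|^2\,dt=\sum_{a\in\mathbb Z}\|f\|^2$.
\end{itemize}
So $M'\backslash G/\Gamma$ is not countably separated, Mackey's subgroup theorem does not apply, and the first formula of Step~3 is not obtained.

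Your inductive argument for Step~2 fails at the second stage. The $\mathfrak h_j$ of a weak basis are not ideals. After reducing $t_n$ modulo $\mathbb Z$, the group acting on the fibre is the conjugate $\exp(t_nX_n)(\Gamma\cap H_{n-1})\exp(-t_nX_n)$. That group need not translate the next coordinate by a discrete group; above, it translates $v$ by $\mathbb Z+u\mathbb Z$.

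Step~4 does not repair this. Conjugating by $\exp(kX)$ with $k\in\mathbb Z$ cannot reach a continuous parameter $t$. Moreover, for general $t$ the polarization $\mathfrak m$ is not subordinate to $l_t$: in the example $l_t(Y_1)=\pm t_6\neq0$ while $[X_1,X_2]=Y_1$, so $\chi_{l_t}$ is not even a character of $M$.

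The missing idea is the paper's device for never slicing $M\backslash G$ when $M$ is not normal. Let $\mathfrak g_0$ be the codimension-one ideal from Kirillov's lemma.
\begin{itemize}
\item Case~2: if $\mathfrak m\not\subset\mathfrak g_0$, replace it by $\mathfrak m_1=\mathfrak m_0\oplus\mathbb RY\subset\mathfrak g_0$. One proves $\pi_{l,\mathfrak m}\simeq\pi_{l,\mathfrak m_1}$ by the Euclidean Fourier transform on the Heisenberg quotient $\mathfrak k/\mathfrak k_0$ and induction in stages.
\item Case~1: write $\pi_l\simeq\Ind_{G_0}^G\pi_{l_0}$. The only slicing is of $G_0\backslash G\cong\mathbb R$, where $\Gamma$ acts freely by integer translations. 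There $\mathbb R=\bigsqcup_{s\in[0,1)}(\mathbb Z+s)$ gives an honest $\int^\oplus_{[0,1)}\Ind^{\Gamma}_{\Gamma\cap G_0}(\cdot)\,ds$.
\end{itemize}
What remains is to restrict a representation of the lower-dimensional group $G_0$ to $\Gamma\cap G_0$. That is handled by induction on $\dim\g$, not by a further slicing of a non-normal homogeneous space.

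In that induction the conjugated functional $\Ad^*(\exp sX)l_0$ on $\mathfrak g_0$ is generally irrational, so the inductive statement must be set up to tolerate this. In the $N_4$ example this is harmless: $\mathfrak g_0$ is two-step, and the rational abelian ideal $\mathrm{span}\{Z,Y_1,Y_2,X_2\}$ polarizes all these functionals. This, rather than fundamental-domain bookkeeping, is the real content of the obstacle you flag in Step~4.

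Your argument is correct, and is essentially Bekka--Driutti's, exactly when $\mathfrak m$ is an ideal.
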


The first theorem is an exact restriction statement.  The next theorem is
the finite-dimensional refinement.  It is deliberately stated with its
rationality hypotheses visible.

\begin{theoremB}[Finite-dimensional rational refinement]
In the setting of Theorem~A, suppose that the parameter \(t\) belongs to the
rational odd locus \(([0,1)\cap\Qodd)^{n-m}\) and satisfies the corresponding
odd-period condition used in Howe's classification.  Let \(P_t\) be a discrete polarization for the
transported functional on the lattice and let \(\chi_t^P\) be an
extension of the character \(\chi_t = \chi_t^\Gamma\) from \(M_t\cap\Gamma\) to \(P_t\).  Then
\[
   \Ind_{M_t\cap\Gamma}^{\Gamma}(\chi_t)
   \simeq
   \int_{\widehat{P_t/(M_t\cap\Gamma)}}^\oplus
      \Ind_{P_t}^{\Gamma}(\chi_t^P\otimes \eta)\,d\eta .
\]
Each representation in the integrand is finite-dimensional and irreducible
of the type classified by Howe.
\end{theoremB}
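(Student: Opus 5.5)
The plan is induction in stages followed by abelian Fourier analysis on the finite abelian quotient \(P_t/(M_t\cap\Gamma)\). First, \(P_t\) is a subgroup of \(\Gamma\) containing \(M_t\cap\Gamma\) with finite index; I will use Howe's construction on the odd-period locus to arrange this. Because \(t\) is rational with odd denominators, the transported functional \(l_t\) takes rational values on \(\log\) of a Malcev basis. The character \(\chi_t=\exp(2\pi\ii\, l_t\circ\log)\) on \(M_t\cap\Gamma\) is then of finite order modulo the commutator data. Howe's theory provides a maximal subgroup \(P_t\supset M_t\cap\Gamma\) on which \(\chi_t\) extends to a character \(\chi_t^P\), with \([\Gamma:P_t]<\infty\). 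The odd-period condition is exactly what makes the Baker--Campbell--Hausdorff correction terms (the factors of \(1/2\)) harmless, so the extension formula \(\chi_t^P(\exp X)=\ee^{2\pi\ii\, l_t(X)}\) defines a genuine character on \(P_t\).

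Next, I will apply induction in stages:
\[
\Ind_{M_t\cap\Gamma}^{\Gamma}(\chi_t)\simeq \Ind_{P_t}^{\Gamma}\bigl(\Ind_{M_t\cap\Gamma}^{P_t}\chi_t\bigr).
\]
The key step is to show that \(M_t\cap\Gamma\) is normal in \(P_t\) with abelian quotient, and that \(\chi_t\) is \(P_t\)-invariant. Both should follow from \(l_t([\log P_t,\log P_t])\subset\Z\) together with the subordinate property. Given this, Mackey/Frobenius gives the decomposition
\[
\Ind_{M_t\cap\Gamma}^{P_t}\chi_t\simeq \int^\oplus_{\widehat{P_t/(M_t\cap\Gamma)}}\chi_t^P\otimes\eta\,d\eta.
\]
This is a Fourier decomposition over the dual of the abelian quotient, which is a torus or finite group with Haar measure. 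Concretely, \(f\mapsto\bigl(\eta\mapsto\sum_{q}\overline{\chi_t^P\eta(q)}f(q)\bigr)\) is the unitary. Since induction commutes with direct integrals (induction from a finite-index subgroup is a finite sum of translates), this yields the stated formula.

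Third, I will verify irreducibility and finite-dimensionality of each \(\Ind_{P_t}^{\Gamma}(\chi_t^P\otimes\eta)\). Finite dimension equals \([\Gamma:P_t]\). For irreducibility I will use Mackey's criterion: for \(g\notin P_t\), the characters \(\chi_t^P\eta\) and \((\chi_t^P\eta)^g\) must differ on \(P_t\cap gP_tg^{-1}\). This reduces to maximality of \(P_t\) as a discrete polarization: if they agreed, \(\langle P_t,g\rangle\) would be subordinate to \(l_t\), contradicting maximality. I will cite Howe's classification here to identify these as his representations attached to the orbit data \((P_t,\chi_t^P\eta)\).

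The main obstacle is the first and key step, namely that \(P_t\) has finite index and normalizes \(M_t\cap\Gamma\) with abelian quotient and an invariant character. In general \(M_t\cap\Gamma\) need not be normal in a discrete polarization. If this fails, I would instead pass to the intersection of conjugates, or replace \(M_t\cap\Gamma\) by the kernel-adapted subgroup \(\{\gamma\in P_t:\ \chi_t^P\ \text{is determined on } \gamma\}\) and decompose along the dual of \(P_t/\ker\). The hardest genuinely arithmetic part is showing that the odd-period hypothesis makes \(l_t\) integral on the BCH-corrected commutators, so that the extension \(\chi_t^P\) exists.
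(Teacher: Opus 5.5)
Your route is the paper's: induction in stages through $P_t$, then $\Ind_{M_t\cap\Gamma}^{P_t}(\chi_t)\simeq\chi_t^P\otimes\Ind_{M_t\cap\Gamma}^{P_t}(\mathbf 1)$, then the Fourier transform on $A_t=P_t/(M_t\cap\Gamma)$. Irreducibility comes from Howe's odd-period theorem, and finite-dimensionality from $[\Gamma:P_t]<\infty$. You single out normality of $M_t\cap\Gamma$ in $P_t$, abelianness of $A_t$, and existence of $\chi_t^P$ as the main obstacle. The paper does not derive these either. It asserts them for lattice-adapted Malcev coordinates, and the extension $\chi_t^P$ is a hypothesis of the statement. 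So you are not missing an idea the paper supplies there.

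There is, however, a false claim in your first step that must be removed. $M_t\cap\Gamma$ does \emph{not} have finite index in $P_t$, and $A_t$ is not finite; only $[\Gamma:P_t]$ is finite. If both indices were finite, $M_t\cap\Gamma$ would have finite index in $\Gamma$, and $\Ind_{M_t\cap\Gamma}^{\Gamma}(\chi_t)$ would already be finite-dimensional. But $M_t\cap\Gamma$ is a lattice in the proper subgroup $M_t$ whenever $\pi_l$ is not a character, so its index in $\Gamma$ is infinite. More precisely, $M_t\cap\Gamma$ is isolated in $\Gamma$: if $\gamma^k\in M_t$ then $\log\gamma=\frac1k\log(\gamma^k)\in\mathfrak m_t$. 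Hence $A_t$ is torsion-free, so free abelian of positive rank, and $\widehat{A_t}$ is a torus. This is exactly why the statement is a direct integral over Haar measure rather than a finite direct sum. In the Heisenberg case $M\cap\Gamma=(\Z,\Z,0)$, $P_t=(\Z,\Z,q\Z)$ and $A_t\cong q\Z$, so the formula is the decomposition over $\widehat{q\Z}$ from Step~H-1.

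Your later hedge (``a torus or finite group'') keeps the Fourier step valid. Your explicit intertwiner must then be read as the Fourier-series unitary $\ell^2(A_t)\to L^2(\widehat{A_t})$, defined on finitely supported functions and extended by Plancherel, not as a finite sum.

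Finally, your Mackey--maximality sketch does not prove irreducibility on its own. Agreement of $\lambda$ and $\lambda^g$ on $P_t\cap gP_tg^{-1}$ does not make $\lambda$ extend to $\langle P_t,g\rangle$. As in the paper, irreducibility should rest on Howe's Theorem~1(a), which is the only place the odd-period hypothesis is used.
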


Theorem~B is the precise form of ``finite-dimensionalization''.  It should
not be read as saying that every Lebesgue-generic fiber in Theorem~A is
finite-dimensional.  Rather, it identifies the finite-dimensional rational
fibers on the rational period subfamily, using the classification theorem of Howe;
this is the locus used by the Pytlik-type inversion functional.  This theorem is the finite-dimensional
skeleton used in the companion analytic paper: all lattice-side traces there
are normalized finite-dimensional traces, while Kirillov or Schr\"odinger
models are used only as smooth normal forms for computing large-height
coefficients.

The finite-dimensional approximation should be understood in the following
limited large-denominator sense.  At rational parameters the
finite-dimensional rational fibers are exact Bloch fibers.  At irrational
parameters, rational characters with large denominators form a dense
arithmetic skeleton in the relevant compact abelian parameter spaces and
therefore provide nearby exact rational models.  The present paper does not
assert a uniform operator-norm or smooth graph-norm convergence theorem
comparing those fibers with an irrational Kirillov or Schr\"odinger model.
Such quantitative estimates depend on the chosen identifications and on the
observables being compared, and are left to the companion analytic work.
For comparison, finite-dimensional approximation in the Fell topology and
related results in residually finite and amenable settings are discussed by
Lubotzky--Shalom \cite{Lubotzky}.  The rigorous statements used below are the
exact rational decompositions and the finite-dimensional Pytlik-type trace
identities.

Combining the factor-Plancherel decomposition of the regular representation
of \(\Gamma\), the extension of the corresponding factor representations to
monomial representations of \(G\), Fujiwara's Plancherel formula
\cite{FujiwaraPlancherel,FujiwaraNote,Corwin2}, and Theorems~A and~B, we obtain the
following generalization of Pytlik's formula for the discrete Heisenberg
group.

\begin{theoremC}[Generalized Pytlik formula]
Let \(\Gamma\) be a torsion-free finitely generated nilpotent group.  There
exists a positive finitely additive probability measure \(\mu\) on
\(\widehatGamma\), supported on the finite-dimensional irreducible
representations \(\widehatGamma_{\mathrm{fin}}\), such that for every
\(f\in\ell^1(\Gamma)\) and every \(\sigma\in\Gamma\),
\[
   f(\sigma)=
   \int_{\widehatGamma_{\mathrm{fin}}}
      \frac{1}{\dim \pi_{\text{\tiny fin}}}
      \Tr\bigl(\pi_{\text{\tiny fin}}(\sigma^{-1})\pi_{\text{\tiny fin}}(f)\bigr)\,
      d\mu(\pi_{\text{\tiny fin}}).
\]
The associated Plancherel identity is also valid.  The measure is used only
for Fourier inversion and trace identities; it is not an ordinary
Hilbert-space direct-integral measure.
\end{theoremC}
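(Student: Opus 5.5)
The plan is to build \(\mu\) as a limit of normalized traces on finite quotients, using Theorem~B to identify the finite-dimensional fibers that carry it. The point to exploit is that the Dirac functional \(f\mapsto f(e)\) is the canonical trace \(\tau\) on the group algebra. Since \(\Gamma\) is residually finite (it is finitely generated nilpotent), \(\tau\) should be approximable by normalized characters of finite-dimensional representations.

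First, choose a cofinal decreasing chain of finite-index normal subgroups \(\Gamma_k\trianglelefteq\Gamma\) with \(\bigcap_k\Gamma_k=\{e\}\). Concretely, take \(\Gamma_k\) generated by \(N_k\)-th powers of a lattice-adapted Malcev basis, with \(N_k\) odd, so that every irreducible constituent falls on the rational odd locus of Theorem~B. The quasi-regular representation \(\lambda_k\) on \(\ell^2(\Gamma/\Gamma_k)\) decomposes as
\[
  \lambda_k\simeq\bigoplus_{\pi\in\widehat{\Gamma/\Gamma_k}}(\dim\pi)\,\pi .
\]
This gives the exact finite inversion formula
\[
  \sum_{\gamma\in\Gamma_k}f(\sigma\gamma)
  =\sum_{\pi}\frac{(\dim\pi)^2}{[\Gamma:\Gamma_k]}\cdot\frac{1}{\dim\pi}\Tr\bigl(\pi(\sigma^{-1})\pi(f)\bigr).
\]
Define the probability measure
\[
  \mu_k=\sum_{\pi}\frac{(\dim\pi)^2}{[\Gamma:\Gamma_k]}\,\delta_\pi
\]
on \(\widehatGamma_{\mathrm{fin}}\). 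Each such \(\pi\) is, by Howe's classification together with Theorem~B, one of the monomial representations \(\Ind_{P_t}^{\Gamma}(\chi^P_t\otimes\eta)\) with rational \(t\). Via Theorem~A it sits inside the restriction \(\pi_l|_\Gamma\) for a rational \(l\), so its trace can be computed from Kirillov data if desired.

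Second, pass to the limit. For \(f\in\ell^1(\Gamma)\), dominated convergence gives
\[
  \sum_{\gamma\in\Gamma_k}f(\sigma\gamma)\to f(\sigma),
\]
because \(\bigcap_k\Gamma_k=\{e\}\) and the tails are summable. To make a single measure, fix a nonprincipal ultrafilter \(\mathcal U\) on \(\mathbb N\). On the Boolean algebra of subsets of \(\widehatGamma_{\mathrm{fin}}\), set
\[
  \mu(E)=\lim_{\mathcal U}\mu_k(E).
\]
This \(\mu\) is positive, finitely additive, has total mass \(1\), and is supported on \(\widehatGamma_{\mathrm{fin}}\).

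The integral of the function
\[
  \pi\mapsto\frac{1}{\dim\pi}\Tr\bigl(\pi(\sigma^{-1})\pi(f)\bigr)
\]
against \(\mu\) is defined as the \(\mathcal U\)-limit of its \(\mu_k\)-integrals. This is legitimate because the function is bounded by \(\|f\|_1\). Consequently the integral is a bounded linear functional on bounded functions, consistent with finite additivity, and it equals \(f(\sigma)\) by the first step. This yields the inversion formula. Applying it to \(f^{*}*f\) at \(\sigma=e\) gives the Plancherel identity
\[
  \|f\|_2^2=\int\frac{1}{\dim\pi}\Tr\bigl(\pi(f)^*\pi(f)\bigr)\,d\mu .
\]
This holds first for \(f\in\ell^1(\Gamma)\) and then extends to \(\ell^1\cap\ell^2\).

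The main obstacle is making \(\mu\) canonical and genuinely tied to the Kirillov/Fujiwara picture, rather than an artifact of the choice of \(\Gamma_k\) and \(\mathcal U\). The theorem as stated only asserts existence, so the ultrafilter construction suffices for it. I would nonetheless try to check that, on the algebra generated by the parameter sets
\[
  \{\Ind_{P_t}^{\Gamma}(\chi_t^P\otimes\eta): t\in A,\ \eta\in B\}
\]
with \(A,B\) finite unions of boxes, the limit exists without an ultrafilter. The expected value is the product of Lebesgue measure in \(t\) (from Theorem~A) and Haar measure in \(\eta\) (from Theorem~B), weighted by Fujiwara's Plancherel density. Proving that requires equidistribution of the rational odd parameters of denominator \(N_k\), which is the delicate step.
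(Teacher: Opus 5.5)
Your argument is correct and is essentially the paper's own proof: the paper obtains the rigorous existence part of Theorem~C from the finite-quotient ultralimit of the Plancherel weights $(\dim\rho)^2/|Q_j|$ along a residual chain, and the only minor difference is that you pass to $\ell^1(\Gamma)$ by dominated convergence of the periodized sums $\sum_{\gamma\in\Gamma_k}f(\sigma\gamma)$, whereas the paper first treats finitely supported $f$ and then approximates. The odd choice of $N_k$ and the identification of the quotient representations with Theorem~B fibers play no role in the proof, and as stated they would need extra care (your $\Gamma_k$ must be normal and nested, which holds for instance for the subgroups generated by all $3^k$-th powers, and Howe's theorem requires the e.e.\ hypothesis), so you can simply drop them.
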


In this form the role of the non--type~I obstruction is transparent.  We do
not diagonalize the regular representation by a standard Borel unitary dual.
Instead, the Malcev completion supplies explicit representation-theoretic
models, and finite-dimensional traces give the lattice-side inversion
formula.

\subsection*{The Heisenberg prototype}

Let
\[
   \Gamma=\Heis_3(\Z),\qquad G=\Heis_3(\R).
\]
For \(h=p/q\in\Q\) with \((p,q)=1\), the Schr\"odinger representation
\(\rho_h\) of \(G\) restricts to \(\Gamma\) as
\[
   \rho_h\restr_{\Heis_3(\Z)}
   \simeq
   \int_0^1{}^\oplus\int_0^1{}^\oplus
      \rho_{\text{\tiny fin},(h,\{qhx_2\},x_3)}\,dx_2\,dx_3 .
\]
Thus the finite-dimensional representations which appear in Pytlik's formula
also appear as exact fibers of the restriction of an infinite-dimensional
Lie-group representation.  This is the model for the general nilpotent
construction.

Applied to the group-algebra element \(u+u^{-1}+v+v^{-1}\), the same formula
gives the usual rational Harper matrices as exact finite-dimensional fibers.
The Wilkinson expansion is then an asymptotic statement inside these exact
\(q\)-dimensional fibers, not a justification for their existence.

\subsection*{Relation with earlier work}

The orbit-method background is due to Kirillov, Mackey, Vergne, and the
standard accounts of Corwin--Greenleaf
\cite{Kirillov1962,Kirillov2004,Mackey,Vergne,CorwinGreenleaf}.  The first
branching theorem should be read in direct relation with earlier restriction
theory.  In the ideal-polarization case it follows the argument of
Bekka--Driutti \cite{BekkaDriutti}; the author learned of this paper from
Ali Baklouti.  The additional polarization-replacement step is the standard
Kirillov--Corwin--Greenleaf mechanism which allows one to pass from a
non-ideal polarization to an equivalent one adapted to a codimension-one
ideal.  Thus the novelty is not a new proof of the classical
ideal-polarization restriction theorem, but the way this restriction theory
is organized with rational lattice data, weak Malcev coordinates, measurable
Bloch families, the classification theorem of Howe, and the final
Pytlik-type inversion formula.

The construction also uses earlier Plancherel theory in an essential way.
The factor-Plancherel decomposition for the lattice side is related to the
work of Mautner, Segal, Thoma, Johnston, and Bekka
\cite{Mautner,Segal,ThomaRegulareMass,ThomaUnitary,Johnston,BekkaPlancherel}.
Fujiwara's Plancherel formula for monomial representations, together with
the Corwin--Greenleaf--Gr'elaud decomposition theory and his note
communicated to the author, supplies the bridge from the factor
representations extended to the Malcev completion to ordinary Kirillov
representations of the Lie group \cite{FujiwaraPlancherel,Corwin2,FujiwaraNote}.
The elementary extension of lattice factor representations to the Malcev
completion by means of the Baker--Campbell--Hausdorff formula was pointed out
to the author by Takashi Iwamoto before the present project was developed.
Finite-dimensional representations of nilpotent lattices are then governed
by Howe's classification \cite{Howe}.

The discrete Heisenberg case is inspired by Pytlik's finitely additive
Plancherel formula \cite{Pytlik}.  The analytic model involving the Harper
operator is related to the work of Wilkinson, Rammal--Bellissard,
Helffer--Sj\"ostrand, Choi--Elliott--Yui, and Avila--Jitomirskaya
\cite{Wilkinson,RammalBellissard,HelfferSjostrandI,HelfferSjostrandII,ChoiElliottYui,AvilaJitomirskaya}.

Several works under the name noncommutative Bloch theory, including those of
Mathai--Marcolli and Gruber, approach related problems through
noncommutative geometry, twisted index theory, \(C^*\)-algebras, or
qualitative spectral invariants \cite{MathaiMarcolli,Gruber}.  The present
paper has a different purpose: it produces a concrete
representation-theoretic restriction and finite-dimensional trace mechanism
for nilpotent lattices.

\subsection*{Organization of the paper}

Section~\ref{dHeisenberg} develops the Heisenberg model, including the
finite-dimensional Pytlik fibers and the direct two-step proof of the
restriction formula.  Section~\ref{Representationsdiscretenilpotent} gives
the nilpotent representation-theoretic construction: factor representations,
monomial extension to the Malcev completion, Fujiwara's Plancherel theorem,
the first restriction theorem, finite-dimensional rational refinement via Howe's classification,
and the generalized Pytlik formula.  Section~\ref{constracthypoelliptic}
records the construction of the associated canonical hypo-elliptic operator,
and Section~\ref{Harperfoundation} explains the limited foundational role of
the Harper operator as an exact finite-dimensional fiber model.  The
appendices are mainly a reference section: they recall induced
representations and standard orbit-method facts for nilpotent Lie groups.
The finite-quotient material included there is only a record of the
trace-functional convention used in Section~\ref{Representationsdiscretenilpotent};
it is not a second proof of the finite-dimensional decomposition.

\subsection*{Use of AI tools}

AI and LLM tools were used as editorial and checking assistants: for
copy-editing, notation and cross-reference checks, and assistance with
\LaTeX{} preparation.  They were not used as independent sources of
mathematical results.  All mathematical definitions, statements, proofs,
corrections, and final judgments remain the responsibility of the author.

\subsection*{Note on earlier versions}

Preliminary versions of portions of this work appeared in the earlier
integrated preprint arXiv:2509.16848.  The statements and proofs of the
results treated in the present paper should be read in the form given here.

\subsection*{Acknowledgements}

First and foremost, the author is deeply grateful to Hidenori Fujiwara for
his expert guidance on the representation theory of nilpotent Lie groups,
which lies at the heart of this project.  In particular, his insightful
note~\cite{FujiwaraNote} and his comprehensive article~\cite{FujiwaraPlancherel}---as
discussed in Step N-3 (Section~\ref{Step0-3})---have been indispensable.

The author also wishes to thank Ali Baklouti, Fumio Hiroshima, Takashi Iwamoto, Tsuyoshi Kajiwara, Hisashi
Naito, Hiroyuki Ochiai, and Tatsuya Tate for many stimulating discussions and
valuable suggestions.

Last but not least, the author thanks Toshikazu Sunada for introducing him to
these themes and for his longstanding advice and encouragement.

\section{The Heisenberg Group as a Model Case}\label{dHeisenberg}\label{sec:heisenberg}
We begin with the three-dimensional discrete Heisenberg group
\(
\Gamma=\mathrm{Heis}_3(\mathbb Z)
\),
because in this case the finite-dimensional fibers and their relation to the
Schr\"odinger representation of \(\mathrm{Heis}_3(\mathbb R)\) can be written
explicitly.  The purpose of this section is to present the direct Heisenberg
construction without using the orbit method.  This direct construction is
computationally transparent and was the original route to the decomposition
formula used in this paper.

The orbit-method formulation for general nilpotent lattices is developed in
Section~\ref{Representationsdiscretenilpotent}.  There the Heisenberg example
is revisited as the first model for the general restriction theorem; comparing
the two presentations shows that the elementary Bloch decomposition here and
the later orbit-method formulation describe the same representation-theoretic
content.  Analytic applications of the same finite-dimensional fibers, such as
Harper-type operators, are only indicated in this foundational paper. Their details and, applications to heat-kernel and closed geodesic asymptotics, are treated in the companion analytic paper.

\subsection{Unitary Representations of the Discrete Heisenberg Group}
\label{discreteHeisenberg}

Let us begin with the three--dimensional discrete Heisenberg group
\(
\Gamma = \mathrm{Heis}_3(\mathbb{Z})
\).
We recall the geometric formulation of Floquet--Bloch theory for a general discrete
group $\Gamma$.

Let $\pi \colon X \to M$ be a normal covering of a compact Riemannian manifold $M$
with covering transformation group $\Gamma$.
The space $L^2(X)$ can be identified with the space $L^2(E_R)$ of square--integrable
sections of the flat vector bundle $E_R$ associated with the right regular
representation $R$ of $\Gamma$.
A central problem is to decompose $L^2(E_R)$ into a direct integral of spaces
associated with irreducible unitary representations of $\Gamma$.

Since $\Gamma$ is not of type~I, the natural Mackey Borel structure on its
irreducible dual $\widehat{\Gamma}$ is not standard.  Consequently,
$\widehat{\Gamma}$ cannot be used as a standard-Borel Bloch parameter space,
and there is no canonical irreducible Plancherel decomposition of the usual
type~I form.  To overcome this difficulty, we employ a version of the
Plancherel theorem due to
Pytlik~\cite{Pytlik}, which provides a Fourier inversion formula based on
finite--dimensional representations.

Let $f \in L^1(\Gamma)$ and let $\pi$ be a unitary representation of $\Gamma$.
The Fourier transform $\pi(f)$ is defined by
\[
\pi(f) = \int_\Gamma f(\sigma)\pi(\sigma)\, d\sigma,
\]
where $d\sigma$ denotes the counting measure on $\Gamma$.

\begin{theorem}[Pytlik~\cite{Pytlik}]\label{Pytlik}
There exists a positive, finitely additive measure $\mu$ on $\widehat{\Gamma}$,
supported on the set $\widehat{\Gamma}_{\mathrm{fin}}$ of finite--dimensional
irreducible unitary representations of $\Gamma$, such that for any
$f \in L^1(\Gamma)$,
\begin{align*}
\sum_{n \in \Gamma} |f(n)|^2
=
\int_{\widehat{\Gamma}_{\mathrm{fin}}}
\frac{1}{\dim \rho}\,
\mathrm{Tr}\!\left(\rho(f)^* \rho(f)\right)
\, d\mu(\rho).
\end{align*}
\end{theorem}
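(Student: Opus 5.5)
The plan is to prove Theorem~\ref{Pytlik} by first disintegrating over the central character, and then replacing the Lebesgue integral in the central parameter by a limit of averages over rational parameters, where the fibers are explicit finite-dimensional representations.

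\textbf{Central Fourier decomposition.} Write elements of $\Gamma=\Heis_3(\Z)$ as $(a,b,c)$, with $z=(0,0,1)$ generating the center. For a finitely supported $f$ and $h\in[0,1)$ put
\[
f_h(a,b)=\sum_{c\in\Z} f(a,b,c)\,\ee^{2\pi\ii hc}.
\]
Parseval in the central variable gives
\[
\sum_{n\in\Gamma}|f(n)|^2=\int_0^1\sum_{a,b}|f_h(a,b)|^2\,dh .
\]
Here $\sum_{a,b}|f_h(a,b)|^2=\tau_h(f_h^*\ast_h f_h)$, where $\ast_h$ is the twisted convolution of the rotation algebra $A_h$ and $\tau_h$ is its canonical trace (evaluation at the identity).

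\textbf{Rational fibers.} For $h=p/q$ with $(p,q)=1$, the clock-and-shift matrices give $q$-dimensional irreducible representations $\rho_{h,\theta}$, $\theta\in\mathbb T^2$, of $\Gamma$ with $\rho_{h,\theta}(z)=\ee^{2\pi\ii h}$. The key identity is
\[
\tau_h(x)=\int_{\mathbb T^2}\frac1q\Tr\bigl(\rho_{h,\theta}(x)\bigr)\,d\theta .
\]
It holds because $\frac1q\Tr\rho_{h,\theta}(a,b,c)$ vanishes unless $q\mid a$ and $q\mid b$. In that case it equals a character of $\theta$, and its $\theta$-average vanishes unless $a=b=0$. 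Consequently
\[
F(h):=\sum_{a,b}|f_h(a,b)|^2=\int_{\mathbb T^2}\frac1q\Tr\bigl(\rho_{h,\theta}(f)^*\rho_{h,\theta}(f)\bigr)\,d\theta
\]
for every rational $h$.

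\textbf{Passing to all $h$.} For finitely supported $f$, the function $F(h)$ is a trigonometric polynomial in $h$, hence continuous. Let $\nu_N$ be the uniform probability measure on the fractions $p/q\in[0,1)$ with $q\le N$ (the Farey fractions). These equidistribute, so
\[
\int F\,d\nu_N\;\to\;\int_0^1F\,dh .
\]
Push $\nu_N\otimes d\theta$ forward to $\widehatGamma_{\mathrm{fin}}$ via $(h,\theta)\mapsto\rho_{h,\theta}$, and define $\mu(E)=\mathrm{LIM}_N\,(\nu_N\otimes d\theta)(E)$ using a fixed Banach limit. Then $\mu$ is a positive, finitely additive probability measure supported on $\widehatGamma_{\mathrm{fin}}$. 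For functions for which the limit exists, in particular for $\rho\mapsto\frac1{\dim\rho}\Tr(\rho(f)^*\rho(f))$, integration against $\mu$ equals that limit, so the formula holds for finitely supported $f$.

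\textbf{Extension to $L^1(\Gamma)$.} Since $\frac1{\dim\rho}|\Tr(\rho(f)^*\rho(f))|\le\|f\|_1^2$ uniformly in $\rho$, both sides are continuous in the $\ell^1$ norm. The formula therefore extends from finitely supported $f$ to all of $L^1(\Gamma)$.

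\textbf{Main obstacle.} The delicate step is the finitely additive limiting procedure. One must specify the algebra of sets or functions on which $\mu$ is defined. One must also check that $\int\cdot\,d\mu$ really coincides with $\lim_N$ on the relevant trace functions, and not merely with some Banach-limit value. This is where equidistribution of the rational parameters and the continuity of $h\mapsto\tau_h(f_h^*f_h)$ are essential.
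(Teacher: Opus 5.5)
Your proof is correct, but it is not the mechanism the paper uses to secure existence.

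\textbf{What the paper does.} The paper does not reprove this theorem. It cites Pytlik and records the measure as $d\widetilde m(x_1)\,dm(x_2)\,dm(x_3)$ with $\widetilde m(\Q\cap(a,b])=b-a$. The existence argument it actually writes out is in the finite-quotient appendix, for the general nilpotent version. There it pulls back the Plancherel weights $(\dim\rho)^2/|Q_j|$ along a residual chain $Q_j=\Gamma/N_j$ and takes an ultralimit. The identity is exact at every sufficiently deep stage, because $\Gamma\to Q_j$ eventually separates the support of $f$.

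\textbf{What your route does and buys.} Your argument instead makes rigorous, for $\Heis_3(\Z)$, the geometric ``cube-first'' description. Your identity $\tau_h=\int_{\mathbb T^2}\frac1q\mathrm{Tr}\,\rho_{h,\theta}\,d\theta$ at $h=p/q$ is exactly the ``abelian Fourier inversion plus finite-dimensional character formula'' step that Step N-7 only asserts. Farey equidistribution is what lets a content on rational fluxes reproduce $\int_0^1F\,dh$ for the continuous $F$. The payoff is a measure that honestly extends Pytlik's: it gives $(\Q\cap(a,b])\times B$ mass $(b-a)\operatorname{Leb}(B)$, and it gives bounded-denominator sets mass zero. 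This ties the trace side directly to the Bloch parameters.

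\textbf{What the paper's route buys.} The finite-quotient argument needs no equidistribution, no continuity in $h$, and no explicit fibre computation. It therefore applies verbatim to every nilpotent lattice, and indeed to any countable residually finite group. The cost is a measure that depends on the chain and ultrafilter and is not literally $\widetilde m\otimes m\otimes m$, as the paper itself notes. Your route would need Howe's character formula and equidistribution on higher-dimensional rational parameter spaces to generalize.

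\textbf{The step you flag as the main obstacle.} This step is routine. Define $\mu$ on the sets whose preimage under $(h,\theta)\mapsto\rho_{h,\theta}$ is Borel. Each $\nu_N\otimes d\theta$ is a probability measure, so uniform approximation by simple functions gives $\int G\,d\mu=\mathrm{LIM}_N\int G(\rho_{h,\theta})\,d\nu_N(h)\,d\theta$ for every bounded measurable $G$. A Banach limit equals $\lim_N$ whenever the latter exists.
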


As a consequence, for $f \in \ell^1(\Gamma)$ and $\sigma \in \Gamma$,
we obtain the following Fourier inversion formula:
\begin{equation}
f(\sigma)
=
\int_{\widehat{\Gamma}_{\mathrm{fin}}}
\frac{1}{\dim \rho}\,
\mathrm{Tr}\!\left(\rho(\sigma^{-1})\rho(f)\right)
\, d\mu(\rho).
\label{Fourier1}
\end{equation}

In the case of $\Gamma = \mathrm{Heis}_3(\mathbb{Z})$,
the dual $\widehat{\Gamma}_{\mathrm{fin}}$ may be identified with
\[
\widehat{X} := (\mathbb{Q}\cap (0,1]) \times [0,1) \times [0,1),
\]
where the first coordinate is read modulo $1$ with $1$ representing the
trivial central character, and the last two coordinates are also read modulo
$1$.  The measure $\mu$ is given by
\begin{equation}
d\mu(x_1,x_2,x_3)
=
d\widetilde{m}(x_1)\, dm(x_2)\, dm(x_3),
\label{measure}
\end{equation}
where $m$ is Lebesgue measure on $[0,1)$,
and $\widetilde{m}$ is the finitely additive measure satisfying
\(
\widetilde{m}(\mathbb{Q}\cap(a,b]) = b-a
\)
for $0\leq a<b\leq1$.

The finite--dimensional irreducible unitary representations
$\rho=\rho_{\mathrm{fin},x}$ are described explicitly in
\cite{Pytlik}.
We briefly recall the construction
in a form convenient for later use.

Let $x=(x_1,x_2,x_3)\in\widehat{X}$,
and assume that $x_1=p/q$ is written in lowest terms.
Then $\rho_{\mathrm{fin},x}$ acts on
$\mathcal{H}_x \simeq \mathbb{C}^q$ by
\begin{equation}
(\rho_{\mathrm{fin},x}(n)\varphi)(k)
=
\exp\!\left(
\frac{2\pi i}{q}
(n_3 x_3 + n_2 x_2 + n_1 q x_1 + k n_2 q x_1)
\right)
\varphi(k+n_3),
\label{finite-rep}
\end{equation}
where
\begin{equation}
n=[n_1,n_2,n_3]
=
\begin{pmatrix}
1 & n_3 & n_1\\
0 & 1 & n_2\\
0 & 0 & 1
\end{pmatrix}
\in\Gamma.
\label{3heis}
\end{equation}

\medskip

There is an equivalent and more concrete matrix description
(cf.~\cite{Davidson}),
which makes the structure of the representation transparent.

Let
\[
w=[1,0,0],\qquad
v=[0,1,0],\qquad
u=[0,0,1]
\in \Gamma,
\]
and set
\begin{equation}
\gamma = \exp(2\pi i x_1) = \exp\Bigl(2\pi i \frac{p}{q}\Bigr),
\qquad
\alpha = \exp\!\left(\frac{2\pi i}{q}x_3\right),
\qquad
\beta  = \exp\!\left(\frac{2\pi i}{q}x_2\right)
\in U(1). \label{fluctuation}
\end{equation}

Then $\rho_{\mathrm{fin},x}$ is determined by
\[
\rho_{\mathrm{fin},x}(w) = \gamma I_q,
\]
and
\begin{equation}
\label{matrixrep}
\rho_{\mathrm{fin},x}(u)
=
\alpha
\begin{pmatrix}
0 & 1 & 0 & \cdots & 0\\
0 & 0 & 1 & \ddots & \vdots\\
\vdots & \ddots & \ddots & \ddots & 0\\
0 & \cdots & 0 & 0 & 1\\
1 & 0 & \cdots & 0 & 0
\end{pmatrix},
\qquad
\rho_{\mathrm{fin},x}(v)
=
\beta
\begin{pmatrix}
1 & 0 & \cdots & 0\\
0 & \gamma & \cdots & 0\\
\vdots & \ddots & \ddots & \vdots\\
0 & \cdots & 0 & \gamma^{q-1}
\end{pmatrix}.
\end{equation}

Here we identify $\mathcal{H}_x$ with $\mathbb{C}^q$ using column vectors
indexed by $k=0,\ldots,q-1$.  Thus the first matrix realizes
$\varphi(k)\mapsto\varphi(k+1)$, and the matrices satisfy
$\rho(u)\rho(v)=\gamma\rho(v)\rho(u)$, in agreement with $uv=wvu$.

\medskip

This description immediately makes visible
the dependence of the representation
on the parameters $\alpha$ and $\beta$,
which control the size of fluctuations
in later decomposition formulas.

The central scalar $\gamma$ is an invariant of the representation.  For a
fixed $\gamma$ (and hence a fixed order $q$), the pair
$(\alpha^q,\beta^q)\in U(1)\times U(1)$ is uniquely determined by
$\rho_{\mathrm{fin},x}$ and determines it up to unitary equivalence.
Equivalently,
\begin{equation}
\label{equivalence}
\text{the correspondence }
\rho_{\mathrm{fin},x}
\longleftrightarrow
(\gamma,\alpha^q,\beta^q)
\text{ is bijective up to unitary equivalence.}
\end{equation}

Independently of the finitely additive measure above, the countably additive
Zak--Floquet transforms used below give ordinary Hilbert direct-integral
decompositions at each fixed rational central character, and compatible
periodic operators decompose fiberwise.  However, since the representation
spaces depend on the parameters $x$,
a comparison with continuous models is required in order to carry out
perturbative analysis.

To this end, we now relate the finite--dimensional representations
$\rho_{\mathrm{fin},x}$ to the Schr\"odinger representations of the real
Heisenberg group.

\subsection{Relation with the Real Heisenberg Group}\label{relationdiscLie}

We now recall the irreducible unitary representations of the real Heisenberg group

\begin{equation}
G = {\rm Heis}_3(\mathbb{R}) = \left\{ (z, y, x) := \begin{pmatrix} 1 & x & z \\ 0 & 1 & y \\ 0 & 0 & 1 \end{pmatrix} \,\middle|\, x, y, z \in \mathbb{R} \right\}. \label{Matrixrepresentation}
\end{equation}

These representations fall into two classes (cf.~\cite{CorwinGreenleaf}):

\begin{description}
\item[(1)] One-dimensional characters that are trivial on the center of $G$.

\item[(2)] Infinite-dimensional irreducible representations $\rho_h$, known as the Schr\"odinger representations, parametrized by $h \in \mathbb{R} \setminus \{0\}$. For $f \in L^2(\mathbb{R})$ and $\gamma = (z, y, x) \in G$, the action is given by
\begin{equation}
(\rho_h(\gamma)f)(s) = e^{2\pi i h(z + s y)} f(s + x). \label{schrep}
\end{equation}
After a unitary dilation, this is equivalent to
\begin{equation}
(\rho_h(\gamma)f)(s)
=
e^{2\pi i\left(h z+\operatorname{sgn}(h)\sqrt{|h|}\,s y\right)}
e^{\sqrt{|h|}x\frac{d}{ds}}f(s),
\label{srep}
\end{equation}
where the exponential of the differential operator denotes the unitary
translation group.
\end{description}

\begin{remark}
The expression~\eqref{schrep} differs from that in Example 2.2.6 (4) of~\cite{CorwinGreenleaf}. This discrepancy arises from differing conventions for the exponential coordinates on the Lie algebra ${\rm Lie}({\rm Heis}_3(\mathbb{R}))$. The formula~\eqref{schrep} corresponds to the matrix realization~\eqref{Matrixrepresentation}, while the expression in~\cite{CorwinGreenleaf} uses canonical coordinates of the first kind. See Subsubsection~\ref{2n+1dimensionalHeisenberg} for a detailed comparison.
\end{remark}

The Schr\"odinger representations play a central role in the Plancherel decomposition of $L^2(G)$. The following Fourier inversion formula holds (cf.~\cite{CorwinGreenleaf}):

\begin{theorem}\label{Fourierinversionheisenberglie}
Let $\mathcal{S}(G)$ denote the space of rapidly decreasing functions on $G$. For any $f \in \mathcal{S}(G)$ and $\sigma \in G$, we have
\begin{equation}
f(\sigma) = \int_{\mathbb{R} \setminus \{0\}} {\rm Tr}\left( \rho_h(\sigma^{-1}) \rho_h(f) \right) |h|\, dh, \label{Liedecomp}
\end{equation}
where $\rho_h(f)$ is the Fourier transform of $f$:

\[
\rho_h(f) = \int_G f(\gamma)\, \rho_h(\gamma)\, d\nu(\gamma),
\]

and $d\nu$ is a left-invariant Haar measure on $G$.
\end{theorem}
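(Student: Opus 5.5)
The plan is to derive the inversion formula \eqref{Liedecomp} from the Plancherel identity for the Schr\"odinger representations. First I compute the operator kernel of \(\rho_h(f)\) for \(f\in\mathcal S(G)\), working in the matrix coordinates \((z,y,x)\) of \eqref{Matrixrepresentation}. In these coordinates the Haar measure is \(d\nu=dz\,dy\,dx\), since the group is unimodular and left translation acts by affine maps of determinant one. Using \eqref{schrep} and substituting \(x=s'-s\), I expect
\[
(\rho_h(f)\varphi)(s)=\int_{\mathbb R}K_h(s,s')\varphi(s')\,ds',\qquad
K_h(s,s')=\widehat f^{\,1,2}(-h,-hs;\,s'-s),
\]
where \(\widehat f^{\,1,2}\) denotes the Euclidean Fourier transform in the \((z,y)\) variables, normalized with \(e^{-2\pi i(\cdot)}\). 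Because \(f\) is Schwartz, \(K_h\) is a Schwartz kernel. Hence \(\rho_h(f)\) is trace class, and
\[
\Tr\rho_h(f)=\int K_h(s,s)\,ds=\int_{\mathbb R}\widehat f^{\,1,2}(-h,-hs;0)\,ds=\frac1{|h|}\int_{\mathbb R}\widehat f^{\,1,2}(-h,\eta;0)\,d\eta ,
\]
where the last step is the change of variables \(\eta=-hs\).

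Next I establish the formula at the identity, \(\sigma=e\). Integrating the trace against \(|h|\,dh\) cancels the Jacobian factor \(1/|h|\). This leaves
\[
\int_{\mathbb R\setminus\{0\}}\int_{\mathbb R}\widehat f^{\,1,2}(-h,\eta;0)\,d\eta\,dh ,
\]
which is the integral of the Fourier transform of \((z,y)\mapsto f(z,y,0)\) over all of \(\mathbb R^2\), since the line \(h=0\) is null. By Euclidean Fourier inversion on \(\mathbb R^2\) this equals \(f(0,0,0)=f(e)\). Absolute convergence of the iterated integral follows from the Schwartz decay of \(\widehat f^{\,1,2}\), which also justifies the use of Fubini.

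For general \(\sigma\in G\), I apply the identity case to the left translate \(f_\sigma(\gamma)=f(\sigma\gamma)\), which is again Schwartz. Left invariance of \(\nu\) gives \(\rho_h(f_\sigma)=\rho_h(\sigma^{-1})\rho_h(f)\), and \(f_\sigma(e)=f(\sigma)\), so \eqref{Liedecomp} follows. The main obstacle is bookkeeping rather than any conceptual difficulty. Because the convention \eqref{schrep} differs from the first-kind coordinates of \cite{CorwinGreenleaf}, I must carefully track signs and the variable shift in the kernel to confirm that the Jacobian is exactly \(1/|h|\). This is what makes the Plancherel density \(|h|\) appear with constant \(1\) under the \(e^{2\pi i}\) normalization. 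I would double-check this by testing the formula on a Gaussian \(f\).
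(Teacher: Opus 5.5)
Your proof is correct, but it takes a different route from the paper. The paper gives no argument of its own and simply cites Corwin--Greenleaf: the formula is the case $n=1$ of the general orbit-method Fourier inversion theorem (Theorem~\ref{FourierinversionLie}). As recorded in \eqref{2n+1heisenbergplancherel}, it comes from the generic cross-section $V_T=\mathbb{R}Z^*$ and the Pfaffian $|\mathrm{Pf}(l)|=|l(Z)|$, with Kirillov's character formula behind it. This is combined with the identification of first-kind exponential coordinates with the matrix coordinates of \eqref{schrep}.

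You instead work entirely by hand in the matrix-coordinate model: the explicit Schwartz kernel, its diagonal integral, and the substitution $\eta=-hs$. The Jacobian $1/|h|$ of that substitution plays the role of the Pfaffian. Integrating out $\eta$, your computation in effect proves the explicit character formula $\mathrm{Tr}\,\rho_h(f)=|h|^{-1}\int_{\mathbb{R}}f(z,0,0)\,e^{2\pi i hz}\,dz$, after which Euclidean Fourier inversion finishes.

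What your route buys is a self-contained argument that makes the normalization explicit. The statement's ``a left-invariant Haar measure'' must be $d\nu=dz\,dy\,dx$ for the constant to be $1$; this is also Lebesgue measure in exponential coordinates, since $z^*=z+\tfrac12 xy$ is a shear. The coordinate-convention worry you flag at the end never actually arises, because you never pass to first-kind coordinates. Translation invariance of $\mathcal{S}(G)$ is also immediate, since left translation $(z,y,x)\mapsto(a+z+cy,\,b+y,\,c+x)$ is affine.

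The paper's route buys uniformity. The same cross-section/Pfaffian mechanism produces the Engel and $N_4$ formulas and fits the general nilpotent framework of Section~\ref{Representationsdiscretenilpotent}. The price is reliance on the external theorem and on the coordinate comparison noted in the remark following \eqref{schrep}.
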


\begin{remark}
If $f \in L^1(G)$, then $\rho_h(f)$ is a compact operator (cf. Corollary 6.2.16 in~\cite{Fujiwara}), though not necessarily of trace class. For further discussion, see Appendix~D.
\end{remark}

We now describe how the Schr\"odinger representations $\rho_h$ restrict to the discrete subgroup $\Gamma = {\rm Heis}_3(\mathbb{Z})$. When $h = p/q \in \mathbb{Q}$ (with $p$ and $q$ relatively prime), the restriction of $\rho_h$ to $\Gamma$ decomposes into a direct integral of finite-dimensional representations:

\begin{theorem}\label{discretetoLie}
Let $h = x_1 = p/q \in \mathbb{Q} \cap (0,1]$. Then the restriction of the Schr\"odinger representation $\rho_h$ to $\Gamma$ decomposes as
\begin{equation}
\rho_h\big|_{\Gamma} \simeq \int_0^1 \!\! \int_0^1 \rho_{{\rm fin}, (x_1, \{q x_1 x_2\}, x_3)}\, dm(x_2)\, dm(x_3), \label{discretetoHeisenbergLie}
\end{equation}
where $\{a\}$ denotes the fractional part of $a \in \mathbb{R}$, and $dm$ is the Lebesgue measure on $[0,1)$.
\end{theorem}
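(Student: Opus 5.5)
Restricting \eqref{schrep} to $n=[n_1,n_2,n_3]\in\Gamma$ gives
\[
(\rho_h(n)f)(s)=e^{2\pi i h(n_1+sn_2)}f(s+n_3),\qquad h=p/q .
\]
The plan is a two-step Zak--Floquet (Bloch) transform, one step for each translation symmetry visible in this formula.

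\textbf{Step 1: Bloch transform in $s$.} The operators $\rho_h(u)$ translate by integers, and $\rho_h(v)$ multiplies by $e^{2\pi i h s}$. Both commute with translation by $q$, because $e^{2\pi i h q}=1$. We therefore decompose $L^2(\mathbb R)$ with respect to the $q\mathbb Z$-translations. Write $s=k+r$ with $k\in\mathbb Z$ and $r\in[0,1)$, so that $L^2(\mathbb R)\simeq\int_0^1{}^\oplus \ell^2(\mathbb Z)\,dr$. The operator $\rho_h(n)$ preserves each fiber and acts on $\ell^2(\mathbb Z)$ by
\[
\varphi(k)\mapsto e^{2\pi i h(n_1+(k+r)n_2)}\varphi(k+n_3).
\]
This is already a direct integral over $r$. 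Each fiber is an infinite-dimensional representation of $\Gamma$, namely the induced representation of Theorem~A with the character determined by $r$.

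\textbf{Step 2: Floquet transform in $k$.} In each fiber, translation $k\mapsto k+q$ commutes with the action, since $e^{2\pi i hqn_2}=1$. Apply the discrete Floquet transform
\[
\ell^2(\mathbb Z)\simeq\int_0^1{}^\oplus\mathbb C^q\,d\theta,\qquad \varphi\mapsto\Bigl(\sum_{j}\varphi(k+jq)e^{-2\pi i j\theta}\Bigr)_{k=0}^{q-1}.
\]
It is unitary by Plancherel on $\mathbb T$. On the fiber $(r,\theta)$ the generators act as follows:
\begin{itemize}
\item $u$ acts as the cyclic shift, with a twist $e^{2\pi i\theta}$ on the wraparound entry;
\item $v$ acts as $e^{2\pi i h r}\,\mathrm{diag}(\gamma^k)$;
\item $w$ acts as $\gamma I_q$.
\end{itemize}

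\textbf{Step 3: identification with $\rho_{\mathrm{fin},x}$.} Conjugate by the diagonal unitary $\mathrm{diag}(e^{2\pi i k\theta/q})$. This distributes the wraparound twist evenly, so $u\mapsto e^{2\pi i\theta/q}\times$(cyclic shift), which is the form in \eqref{matrixrep} with $\alpha=e^{2\pi i x_3/q}$, $x_3=\theta$. The $v$-matrix is $\beta\,\mathrm{diag}(\gamma^k)$ with
\[
\beta=e^{2\pi i h r}=e^{2\pi i x_2/q},\qquad x_2=\{qhr\}.
\]
The fractional part is allowed because $\beta$ only matters through $\beta^q$, by \eqref{equivalence}.

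Renaming $r\to x_2$ and $\theta\to x_3$ then gives the integrand $\rho_{\mathrm{fin},(x_1,\{qx_1x_2\},x_3)}$ with Lebesgue measure $dm(x_2)\,dm(x_3)$. Measurability of the intertwining family is immediate, since everything is explicit and continuous in $(r,\theta)$.

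\textbf{Main obstacle: convention bookkeeping.} The hardest part is matching signs and orientations with \eqref{finite-rep}. The questions are whether $u$ shifts by $+1$ or $-1$, whether the phase $k n_2 q x_1$ in \eqref{finite-rep} matches $\gamma^k$, and how the $n_1$ coefficient in \eqref{finite-rep} should be read so that $w\mapsto\gamma I_q$. The way to handle these is to compare only the invariants in \eqref{equivalence}: the central character $\gamma$ and the pair $(\alpha^q,\beta^q)=(e^{2\pi i\theta},e^{2\pi i q h r})$. If a reflection $\theta\to-\theta$ or $r\to 1-r$ is needed to match the formula exactly, it preserves Lebesgue measure on $[0,1)$, so the decomposition is unaffected.
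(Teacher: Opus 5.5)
Your argument is correct, but it reaches $\ell^2(\mathbb Z)$ by a different route than the paper. The finite-dimensional part, namely the period-$q$ Floquet transform on $\ell^2(\mathbb Z)$ followed by the diagonal gauge $\mathrm{diag}(e^{2\pi i k\theta/q})$, is the paper's Step H-1, where the same gauge appears as multiplication by $s(k)=\chi^{k/q}$.

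\textbf{Where the routes differ.} The difference is in passing from $L^2(\mathbb R)$ to $\ell^2(\mathbb Z)$.
\begin{itemize}
\item \textbf{The paper (Step H-2).} It first takes $q$-quasi-periodic fibers on $\mathbb R$. It then replaces the multiplier $e^{2\pi i h s n_2}$ by the dyadic step function $f_m(s)$, which splits each fiber into finitely many blocks $\rho_{\mathrm{fin},(x_1,x_2+(j/2^m)qx_1,x_3)}$. It concludes by letting $m\to\infty$, using operator-norm convergence for each fixed $n$ and convergence of Riemann sums to the direct integral.
\item \textbf{Your proof.} You slice exactly: $L^2(\mathbb R)\simeq\int_0^1{}^\oplus\ell^2(\mathbb Z+r)\,dr$. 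This is legitimate because every $\rho_h(n)$ translates by an integer, so it commutes with multiplication by $1$-periodic functions. It is the same device, $(Tf)_s=f|_{\mathbb Z+s}$, that the paper itself uses in Case~1 of Theorem~\ref{firstdecomposition}.
\end{itemize}

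\textbf{What each route buys.} Your composite map is one explicit unitary intertwiner, essentially a period-$q$ Zak transform followed by a gauge, so no limiting argument is needed. This is a real simplification. Pointwise operator-norm approximation plus convergence of Riemann sums does not by itself produce a unitary equivalence of representations, whereas your map is one. The paper's route instead shows the real-line fiber as a limit of finitely many rational finite-dimensional blocks, which was its original motivation.

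\textbf{Small corrections.}
\begin{itemize}
\item In Step 1 the symmetry you are diagonalizing is the algebra of multiplications by $\mathbb Z$-periodic functions, not $q\mathbb Z$-translation. The period $q$ only enters in Step 2.
\item The display $e^{2\pi i hr}=e^{2\pi i x_2/q}$ with $x_2=\{qhr\}$ holds only up to a $q$-th root of unity. The replacement is implemented by conjugating with a suitable power $C^{m(r)}$ of the cyclic shift, where $m(r)$ is determined by $\lfloor pr\rfloor$ modulo $q$; this is solvable since $(p,q)=1$. That conjugation fixes the $u$- and $w$-matrices and multiplies $\beta$ by a power of $\gamma$.
\item This intertwiner is piecewise constant in $r$, not continuous. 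That still suffices for measurability.
\item Your conventions already agree with \eqref{finite-rep} and \eqref{matrixrep}: $u$ shifts by $+1$, $v\mapsto\beta\,\mathrm{diag}(\gamma^k)$, and $w\mapsto\gamma I_q$. So no reflection of parameters is needed.
\end{itemize}
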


This result is the Heisenberg prototype of the general restriction theorem, Theorem~\ref{sec3decompose}.  Here, however, we keep the direct proof, avoiding the orbit method, because it was the original route to the decomposition and serves as a model for the later nilpotent argument.

We divide the proof into two steps, both of which are essentially abelian in nature.  We label them H-1 and H-2 to distinguish the direct Heisenberg construction from the nilpotent steps in Section~\ref{Representationsdiscretenilpotent}.  Step H-1 is the finite-dimensional rational Bloch-fiber decomposition, and Step H-2 is the direct real-line slicing proof of the restriction formula.  These steps are kept here since they were the author's original direct route to the Heisenberg theorem and are different from the proof of the general nilpotent theorem given in the later section. 

The basic strategy is simple, based on the following correspondences:

\begin{description}
\item[Structure of Step H-1]
\begin{align}
L^2(\mathbb{Z}) &\simeq L^2((\mathbb{Z}/q\mathbb{Z}) \times q\mathbb{Z}) \simeq L^2(\mathbb{Z}/q\mathbb{Z}) \otimes L^2(q\mathbb{Z}) \notag \\
&\simeq L^2(\mathbb{Z}/q\mathbb{Z}) \otimes L^2(S^1) \simeq L^2(\mathbb{Z}/q\mathbb{Z}) \otimes L^2([0,1)). \label{step1discretetolie}
\end{align}

\item[Structure of Step H-2]
\begin{equation}
L^2(\mathbb{R}) \simeq L^2((\mathbb{R}/\mathbb{Z}) \times \mathbb{Z}) \simeq L^2([0,1)) \otimes L^2(\mathbb{Z})\left(\simeq L^2(E_R(\mathbb{R}/\mathbb{Z}))\right), \label{step2discretetolie}
\end{equation}
where $R$ is the right regular representation of $\mathbb{Z}$, and $L^2(E_R(\mathbb{R}/\mathbb{Z}))$ denotes the space of $L^2$-sections of the flat vector bundle over the circle $S^1 = \mathbb{R}/\mathbb{Z}$ associated to $R$.

\end{description}

\medskip

\begin{proof}
\noindent
Step H-1. We begin by extending the action of $\rho_{{\rm fin},x}$ on $\mathcal{H}_x$ to an action on $L^2(\mathbb{Z})$ using the same formula as in~\eqref{finite-rep}. Denote this extended action by $\rho_{L^2(\mathbb{Z}),x}$. Then, corresponding to \ref{step1discretetolie}, we have the decomposition
\begin{equation}
L^2(\mathbb{Z}) \simeq \int_{\widehat{q\mathbb{Z}}}^\oplus \mathcal{H}_{\chi,q}^{\mathbb{Z}}\, d\chi, \label{Zdecomp}
\end{equation}
where $\widehat{q\mathbb{Z}}$ is the unitary dual of $q\mathbb{Z}$, and for
$\chi = \exp(2\pi i a) \in U(1)$ with $a \in [0,1)$, we define the
$q$-dimensional Hilbert space
\[
\mathcal{H}_{\chi,q}^{\mathbb{Z}}
:=
\left\{ \varphi: \mathbb{Z} \to \mathbb{C}
\,\middle|\, \varphi(k+q) = \chi \varphi(k) \right\},
\qquad
\|\varphi\|_{\chi,q}^2
:=
\sum_{k=0}^{q-1}|\varphi(k)|^2.
\]

Each $\mathcal{H}_{\chi,q}^{\mathbb{Z}}$ is invariant under $\rho_{L^2(\mathbb{Z}),x}$, and its restriction is unitarily equivalent to $\rho_{{\rm fin},(x_1, x_2, x_3 + a)}$ on $\mathcal{H}_{(x_1, x_2, x_3 + a)}$. This equivalence is realized via the identification

\[
\mathcal{H}_{\chi,q}^{\mathbb{Z}} \ni s \varphi \longleftrightarrow \varphi \in \mathcal{H}_{(x_1, x_2, x_3 + a)},
\]

where $s(k) := \chi^{k/q} = \exp(2\pi i a k / q)$ acts as a multiplication operator. This identification is compatible with the action of $\Gamma$ due to the relations

\[
\rho_{{\rm fin},x}(u) s = \exp(2\pi i a/q) s \rho_{{\rm fin},x}(u), \quad \rho_{{\rm fin},x}(v) s = s \rho_{{\rm fin},x}(v).
\]

Thus, we obtain

\[
\rho_{L^2(\mathbb{Z}),x} \simeq \int_0^1{}^\oplus \rho_{{\rm fin},(x_1, x_2, x_3 + a)}\, dm(a).
\]

\medskip
\noindent
Step H-2. Next, we extend the action of $\rho_{L^2(\mathbb{Z}),x}$ to $L^2(\mathbb{R})$ using the same formula~\eqref{finite-rep}, now interpreted for $s \in \mathbb{R}$. Denote this action by $\rho_{L^2(\mathbb{R}),x}$.

Corresponding to \ref{step2discretetolie}, we consider the decomposition

\begin{equation}
L^2(\mathbb{R}) \simeq \int_{\widehat{q\mathbb{Z}}}^\oplus \mathcal{H}_{\chi,q}^{\mathbb{R}}\, d\chi, \label{interHeisenbergdecomposion}
\end{equation}

where
\[
\mathcal{H}_{\chi,q}^{\mathbb{R}}
:=
\left\{ f\in L^2_{\mathrm{loc}}(\mathbb{R})
\,\middle|\, f(s+q)=\chi f(s)\ \text{for a.e. }s \right\},
\qquad
\|f\|_{\chi,q}^2
:=
\int_0^q |f(s)|^2\,ds.
\]
Thus these are Floquet fibers equipped with a fundamental-domain norm, not
closed subspaces of the usual global space $L^2(\mathbb{R})$.

To relate these spaces, we approximate $\mathcal{H}_{\chi,q}^{\mathbb{R}}$ by step functions. For each $m \in \mathbb{N}$, divide $[0,1)$ into $2^m$ equal subintervals $I_j = [j/2^m, (j+1)/2^m)$ and define

\[
\mathcal{H}_{m,\chi,q}^{\mathbb{R}} := \left\{ f \in \mathcal{H}_{\chi,q}^{\mathbb{R}} \,\middle|\, f \text{ is locally constant on each } I_j + r, \; r = 0, \ldots, q-1 \right\}.
\]

Define
\[
f_m(s)
:=
\lfloor s\rfloor
+2^{-m}\Bigl\lfloor 2^m\bigl(s-\lfloor s\rfloor\bigr)\Bigr\rfloor .
\]
Then $f_m(s+r)=f_m(s)+r$ for $r\in\mathbb Z$ and
$|f_m(s)-s|\leq2^{-m}$.  Define an approximate action $\rho_x^m(n)$ of
$\Gamma$ on $\mathcal{H}_{\chi,q}^{\mathbb{R}}$ by
\[
(\rho_x^m(n)\varphi)(s)
=
\exp\left(
\frac{2\pi i}{q}
(n_3 x_3+n_2 x_2+n_1 q x_1+f_m(s)n_2 q x_1)
\right)
\varphi(s+n_3).
\]
The subspace $\mathcal{H}_{m,\chi,q}^{\mathbb{R}}$ is invariant.  For
$j=0,\ldots,2^m-1$, its subspace
$\mathcal{H}_{m,\chi,q}^{I_j}$ supported on $I_j+\mathbb Z$ is invariant and
unitarily equivalent to
$\rho_{{\rm fin},(x_1,x_2+(j/2^m)q x_1,x_3)}$.  The union of the
step-function subspaces is dense in the fiber norm, and for every fixed
$n\in\Gamma$,
\[
\bigl\|\rho_x^m(n)-\rho_{L^2(\mathbb{R}),x}(n)\bigr\|
\leq
2\pi |n_2x_1|\,2^{-m}.
\]
Consequently the limiting action is taken in operator norm for each fixed
group element, while the finite direct sums over $j$ converge as Riemann
sums to the direct integral in~\eqref{discretetoHeisenbergLie}.
\end{proof}

\begin{remark}[Large-denominator interpretation]\label{remark34}
The exact generalized Pytlik inversion is supported on rational
finite-dimensional fibers, so no irrational representation is needed in the
rigorous lattice-side identity.  For a rational parameter \(h=p/q\), the
preceding formula is an exact
decomposition of the restricted Schr\"odinger representation into
\(q\)-dimensional Heisenberg fibers.  Thus, at rational flux, the
finite-dimensional matrices are not approximations to the Bloch fibers; they
are the Bloch fibers.

For an irrational parameter \(h\), there is no single finite denominator
\(q\), and hence no finite-dimensional decomposition of this exact form.
Rational numbers
\[
   h_\nu=\frac{p_\nu}{q_\nu}\longrightarrow h,
   \qquad q_\nu\longrightarrow\infty,
\]
provide a natural family of nearby parameters for which the exact rational
fiber decomposition is available.  The present paper uses this density only
to identify a large-denominator arithmetic skeleton.  It does not prove a
uniform operator-norm or smooth graph-norm convergence theorem comparing the
rational fibers with the irrational Schr\"odinger model.  Any such
quantitative comparison must specify the identifications, the observables,
and the topology, and is deferred to the companion analytic work.

The general nilpotent construction in
Section~\ref{Representationsdiscretenilpotent} has the same qualitative
feature: rational polarization characters are dense in the corresponding
compact abelian parameter spaces and yield exact finite-dimensional rational
fibers.  No stronger analytic convergence statement is asserted here.
\end{remark}

\section[Representations of nilpotent groups]
{Representations of Discrete Nilpotent Groups and Their Lie Completions}
\label{Representationsdiscretenilpotent}\label{sec:nilpotent}


This section contains the representation--theoretic core of the paper.
Our aim is to relate unitary representations of a torsion--free finitely generated
nilpotent discrete group $\Gamma$
to those of its Malcev completion $G$,
that is, the unique connected, simply connected nilpotent Lie group
in which $\Gamma$ embeds as a lattice,
and to establish canonical restriction and decomposition formulas
that generalize the Heisenberg--group case.

The emphasis is on structural results of a foundational nature.
Applications to spectral and asymptotic problems are not pursued here
and will be treated in a subsequent paper.

\subsection{Outline of the reduction procedure}
\label{ReductionOutline}

We summarize the reduction procedure that connects
unitary representations of the lattice $\Gamma$
with those of its Malcev completion $G$.
Each step corresponds to a precise structural result
that will be established in detail in the subsequent sections.
We use the label N for these nilpotent steps in order to keep them visibly
separate from the direct Heisenberg steps H-1 and H-2 in
Section~\ref{relationdiscLie}.  The labels are only a guide to the proof:
standard inputs and the new assembly into a Floquet--Bloch mechanism are
identified explicitly at each stage.

\begin{description}

\item[Step N-1.]
(Section~\ref{Step0-1})
We recall the factor representation decomposition of the right regular
representation of $\Gamma$ through Thoma's character space.  This is not a
classification of the full irreducible dual; it is the non--type~I starting
point from which the trace side of the theory is organized.

\item[Step N-2.]
(Section~\ref{Step0-2})
The factor representations relevant for the Plancherel theory are monomial.
After choosing compatible extensions of central characters from $Z(\Gamma)$
to $Z(G)$, they are compared with monomial representations of the Malcev
completion.  The extension is canonical only after the rational lift of the
central character has been fixed; changing the lift by an integral character
does not change the restriction to the lattice.

\item[Step N-3.]
(Section~\ref{Step0-3})
Fujiwara's Plancherel theorem decomposes these monomial representations of
$G$ into ordinary direct integrals of Kirillov representations.

\item[Step N-4.]
(Section~\ref{Step0-4})
We restrict the Kirillov representations of $G$ back to $\Gamma$.  For
rational Kirillov parameters this gives an ordinary Hilbert direct integral
over a compact torus whose fibers are induced representations of the lattice.
This is the first branching theorem.

\item[Step N-5.]
(Section~\ref{Step0-5})
On the rational odd locus, the induced fibers admit a further
finite-dimensional refinement.  This step is arithmetic and should be kept
separate from the first branching theorem.

\item[Step N-6.]
(Section~\ref{Step0-7})
We recall the part of Howe's classification of finite-dimensional
representations of torsion-free nilpotent groups used in Step~N-5.  This is a
standard input; the present paper uses it to identify the rational lattice
fibers obtained from the first restriction theorem.

\item[Step N-7.]
(Section~\ref{Step0-6})
A Pytlik-type Fourier inversion formula is obtained from finite-dimensional
traces.  The measure in this formula is finitely additive and is constructed
from finite quotients; it is not the measure of an ordinary Hilbert direct
integral.

\end{description}

\subsection
[Factor representations and Plancherel decomposition]
{Factor representations and Plancherel decomposition of discrete nilpotent groups (Step N-1)}
\label{Step0-1}


We begin by recalling the representation--theoretic framework
for decomposing the right regular representation of a discrete nilpotent group
$\Gamma$ into factor representations.
For a countable discrete group, Thoma's theorem detects the type~I
property group-theoretically: it holds exactly for groups that are virtually
abelian.  Thus a nonabelian torsion--free finitely generated nilpotent group
is not of type~I.  Glimm's theorem then explains the descriptive-set-theoretic
consequence: the irreducible dual cannot be used as a standard Borel
classification space in the same way as the character torus in the abelian
case.  This fundamental difficulty necessitates an alternative approach.
The material recalled in this step is a standard input from the
factor-Plancherel theory of countable groups; the present paper uses it only
as the starting trace-theoretic layer for the nilpotent Floquet--Bloch
construction.

Following the works of Bekka~\cite{BekkaPlancherel}, Johnston~\cite{Johnston},
and related developments,
one replaces $\widehat{\Gamma}$
with Thoma's dual space $\mathrm{Ch}(\Gamma)$,
which is a standard Borel space parametrizing
quasi--equivalence classes of finite--type factor representations
via extremal traces.

Recall that a \emph{von Neumann algebra}
is a self--adjoint subalgebra of $\mathcal{L}(H)$,
closed in the weak operator topology.
Such an algebra is called a \emph{factor}
if its center consists only of scalar multiples of the identity.
A unitary representation $\pi$ of $\Gamma$
is called a \emph{factor representation}
if the von Neumann algebra $\pi(\Gamma)''$
generated by $\pi(\Gamma)$ is a factor.

By Thoma's theorem~\cite{ThomaUnitary},
a countable discrete group is of type~I
if and only if it contains an abelian subgroup of finite index.
A nonabelian torsion--free finitely generated nilpotent group is not virtually
abelian, so its representation theory necessarily involves non--type~I
phenomena.

Let $\mathrm{Tr}(\Gamma)$ denote the compact convex set
of normalized, conjugation--invariant,
positive--definite functions on $\Gamma$.
Its extreme points form Thoma's dual space $\mathrm{Ch}(\Gamma)$.
Each $t\in\mathrm{Ch}(\Gamma)$ gives rise to a GNS representation $\pi_t$,
and Bekka's Plancherel theorem~\cite{BekkaPlancherel}
asserts that the right regular representation $\rho_R$ of $\Gamma$
admits the canonical decomposition
\[
\rho_R
=
\int_{\mathrm{Ch}(\Gamma)}^\oplus
\pi_t \, d\mu(t),
\]
where $\mu$ is the Plancherel measure on $\mathrm{Ch}(\Gamma)$.

For $\mu$--almost every $t$,
the representation $\pi_t$ is a factor representation.
Although non--factor representations may appear on dense subsets of
$\mathrm{Ch}(\Gamma)$,
the above decomposition provides a robust framework
for harmonic analysis on $\Gamma$ beyond the type~I setting.

For the nilpotent Plancherel layer used here, Bekka's and Johnston's
results give a more concrete description.  The generic factor
representations which occur in the factor-Plancherel decomposition are
realized as representations induced from unitary characters of the center
$Z(\Gamma)$~\cite{BekkaPlancherel,Johnston}.  In particular, they are monomial
representations, in the usual sense of being induced from a one-dimensional
unitary character of a subgroup.  We will use this special central-induced
model below; we do not mean that every monomial representation is induced
from the center.

\subsection{Extension to the Malcev completion via monomial representations (Step N-2)}
\label{Step0-2}

We now explain how the factor representations
arising in the Plancherel decomposition of $\Gamma$
extend to unitary representations of the Malcev completion $G$.
This extension step is included with explicit credit: the observation that
representations of a discrete nilpotent lattice can be extended to the Malcev
completion by using the Baker--Campbell--Hausdorff formula was communicated
to the author by Takashi Iwamoto.  What is used here is this standard
extension mechanism together with the rational central-character data fixed in
Step N-1.

Let $G$ be the connected, simply connected nilpotent Lie group
whose Lie algebra integrates the rational structure of $\Gamma$,
so that $\Gamma$ embeds as a lattice in $G$.
The center $Z(\Gamma)$ embeds discretely into $Z(G)$.  A unitary character
of $Z(\Gamma)$ extends to a character of $Z(G)$ after choosing a real lift of
its logarithm; the lift is not unique, but two choices differing by an
integral character have the same restriction to $Z(\Gamma)$.

Accordingly, each factor representation $\pi_t$ of $\Gamma$,
being induced from a character $\chi_t^\Gamma$ of $Z(\Gamma)$,
can be compared, after such a choice of lift, with a monomial unitary
representation of $G$, defined by
\[
\widetilde{\pi}_t := \mathrm{Ind}_{Z(G)}^G(\chi_t^G).
\]
This representation acts on the Hilbert space
\[
\widetilde{H}_t
=
\Bigl\{
f:G\to\mathbb{C}
\ \big|\ 
f(zg)=\chi_t^G(z)f(g)\ \text{for }z\in Z(G),
\ \int_{Z(G)\backslash G}|f(g)|^2\,dg<\infty
\Bigr\},
\]
by right translation,
\[
(\widetilde{\pi}_t(g)f)(\gamma)=f(\gamma g),
\qquad g,\gamma\in G.
\]

By construction, the restriction of $\chi_t^G$ to $Z(\Gamma)$ is the original character $\chi_t^\Gamma$, and the central character of $\widetilde{\pi}_t|_\Gamma$ agrees with that of the factor representation $\pi_t$.  The comparison is therefore made through the corresponding monomial model.
At this point, the analysis is transferred from the discrete group $\Gamma$
to monomial representations of the nilpotent Lie group $G$.

The decomposition of such representations into irreducible components
is governed by the orbit method.
In particular, the Plancherel decomposition of monomial representations,
developed by Fujiwara,
provides the essential bridge between the representation theory of $\Gamma$
and that of $G$.
We turn to this result in the next step.

\subsection
[Monomial representations and Fujiwara's theorem]
{Decomposition of monomial representations via Fujiwara's Plancherel theorem (Step N-3)}
\label{Step0-3}\label{subsec:monomial}


As observed in Section~\ref{Step0-2},
each factor representation $\pi_t$ of $\Gamma$
extends to a monomial representation
\[
\widetilde{\pi}_t = \mathrm{Ind}_{Z(G)}^G(\chi_t^G)
\]
of the Malcev completion $G$,
induced from a unitary character $\chi_t^G$ of the center $Z(G)$.

In this step, we apply
Fujiwara's Plancherel theorem~\cite{FujiwaraPlancherel,FujiwaraNote}
to decompose $\widetilde{\pi}_t$
into a direct integral of irreducible unitary representations of $G$.
This result plays a central role in our approach
and will ultimately lead to the restriction and decomposition formulas
for representations of $\Gamma$.

\subsubsection*{Monomial Representations and Linear Functionals}

Let $\chi_t^G : Z(G)\to \mathbb{T}$ be a unitary character
and let $d\chi_t^G : \mathfrak{z}\to i\mathbb{R}$
denote its differential,
where $\mathfrak{z}=\mathrm{Lie}(Z(G))$.
Choose a linear functional
$f\in\mathfrak{g}^*$
extending $d\chi_t^G$,
namely satisfying $f|_{\mathfrak{z}}=d\chi_t^G$.
Consider the affine subspace
\[
\Lambda_f
:=
\{\ell\in\mathfrak{g}^*\mid \ell|_{\mathfrak{z}}=f|_{\mathfrak{z}}\}
=
f+\mathfrak{z}^\perp.
\]

The space $\Lambda_f$
parametrizes all coadjoint orbits
compatible with the central character $\chi_t^G$
and serves as the natural parameter domain
for the irreducible representations
appearing in the decomposition of $\widetilde{\pi}_t$.

\subsubsection*{Fujiwara's Plancherel Decomposition}

According to Fujiwara's Plancherel theorem
(Theor\`eme~2 in~\cite{FujiwaraPlancherel}),
the monomial representation $\widetilde{\pi}_t$
admits the decomposition
\[
\widetilde{\pi}_t
\simeq
\int_{\Lambda_f}^\oplus
\pi_\ell\,
d\mu(\ell),
\]
where $\pi_\ell$
denotes the irreducible unitary representation of $G$
associated with $\ell\in\mathfrak{g}^*$
via the orbit method,
and $\mu$ is a Plancherel measure supported on $\Lambda_f$.

\subsubsection*{Stratification by Orbit Dimension}

Since the dimension of coadjoint orbits in $\Lambda_f$
is not constant,
we stratify $\Lambda_f$ by orbit dimension.
For each even integer $2e$,
let $U_e\subset\Lambda_f$
denote the Zariski--open subset of all $\ell$
such that the coadjoint orbit $\mathcal{O}_\ell$
has dimension $2e$.
Then the above decomposition refines to
\[
\widetilde{\pi}_t
\simeq
\bigoplus_e
\int_{U_e}^\oplus
\pi_\ell\, d\mu_e(\ell),
\]
where $\mu_e$ is a smooth measure supported on $U_e$.

Each stratum $U_e$
can be described as a union of affine subspaces
of the form
\[
V
=
f+\sum_{j\in T}\mathbb{R}X_j^*,
\]
where $T$ consists of non--jump indices,
namely directions along which the orbit dimension remains locally constant; see Appendix Theorem~\ref{corwin316} for the non-jump index convention.

\subsubsection*{Conclusion}

Through Fujiwara's decomposition,
the monomial representation $\widetilde{\pi}_t$
is expressed as a direct integral
of irreducible representations of $G$
parameterized by $\Lambda_f$.
In the subsequent substeps,
we restrict these representations back to the discrete subgroup $\Gamma$
and impose rationality conditions,
leading to finite--dimensional components
and ultimately to the main decomposition theorem
for nilpotent discrete groups.

\subsection
[Restriction to lattices]
{Restriction of irreducible unitary representations to lattices (Step N-4)}


In this section and the following one, we establish a decomposition formula
for the restriction to a lattice $\Gamma$ of an irreducible unitary representation
$\pi_l$  of a connected, simply connected nilpotent Lie group $G$ associate to $l \in \mathfrak{g}^\ast$.
This result constitutes the analytic and representation--theoretic core of the paper.

Our task is to decompose the restricted representation $\pi_l|_{\Gamma}$
as a direct integral of smaller unitary representations of $\Gamma$.
Such restriction problems for nilpotent groups have been studied previously.
The prototype for the ideal-polarization case is the theorem of
Bekka--Driutti~\cite{BekkaDriutti}, and the author learned of this paper from
Ali Baklouti.  When the polarization is not an ideal, we use the standard
polarization-replacement argument from Kirillov's proof, as presented for
example in Corwin--Greenleaf~\cite{CorwinGreenleaf}.

While several structural ideas already appear in the case of the
Heisenberg group, our main contribution lies in extending these arguments
to arbitrary torsion-free nilpotent groups
and in formulating decomposition results suitable for further analysis,
including finite-dimensional refinements developed in
Step N-5 (Section~\ref{Step0-5}).

As a guiding model,
we begin by revisiting the Heisenberg lattice
$\Gamma=\mathrm{Heis}_3(\mathbb{Z})$
from the viewpoint of the general orbit method.

\subsubsection{The Heisenberg Group as a Model for the Restriction Problem}\label{Step0-4}

We first illustrate Theorem~\ref{sec3decompose}
in the case of the Heisenberg group,
recasting Example~1 of~\cite{BekkaDriutti}
in a form adapted to our general framework.

\begin{example}[Heisenberg group revisited]\label{BekkaHeisenberg1}
Let $\mathfrak{g}=\mathbb{R}X_1\oplus\mathbb{R}X_2\oplus\mathbb{R}X_3$
be the Heisenberg Lie algebra with $[X_3,X_2]=X_1$,
and let $\{X_1^*,X_2^*,X_3^*\}$ denote the dual basis of $\mathfrak{g}^*$.
The corresponding Lie group
$G=\mathrm{Heis}_3(\mathbb{R})$
may be identified with $\mathbb{R}^3$ endowed with the group law
\[
(x_1,x_2,x_3)\cdot(y_1,y_2,y_3)
=
(x_1+y_1+x_3y_2,\; x_2+y_2,\; x_3+y_3).
\]

Let
$\Gamma=(\mathbb{Z},\mathbb{Z},\mathbb{Z})$
be the standard lattice.
Then $\{X_1,X_2,X_3\}$ is a strong Malcev basis of $\mathfrak{g}$
adapted to $\Gamma$.

Let $\pi$ be a nontrivial irreducible unitary representation of $G$.
Then there exists $\alpha\in\mathbb{R}\setminus\{0\}$
such that
$\pi\simeq\mathrm{Ind}_M^G(\chi_l)$,
where $l=\alpha X_1^*\in\mathfrak{g}^*$,
$\mathfrak{m}=\mathbb{R}X_1\oplus\mathbb{R}X_2$
is a polarization of $l$,
and $M=\exp(\mathfrak{m})$.

In this case, $\mathfrak{m}$ is an ideal of $\mathfrak{g}$,
and one obtains the restriction formula
\begin{equation}
\pi|_{\Gamma}
\simeq
\int_{[0,1)}^\oplus
\mathrm{Ind}_{M\cap\Gamma}^{\Gamma}
\bigl(\chi_{\alpha X_1^*+t\alpha X_2^*}|_{M\cap\Gamma}\bigr)\,dt.
\label{BekkaHeisenberg}
\end{equation}

Let $H=M\cap\Gamma$.  For a character $\chi$ of a normal subgroup
$H$ of $\Gamma$, write
\[
   \operatorname{St}^{\Gamma}_{H}(\chi)
   =
   \{\gamma\in\Gamma\mid
      \chi(\gamma h\gamma^{-1})=\chi(h)\text{ for all }h\in H\}
\]
for its stabilizer.  If $l_t=\alpha X_1^*+t\alpha X_2^*$, then a direct
calculation gives
\[
\operatorname{St}^{\Gamma}_{M\cap\Gamma}(\chi_{l_t})
=
\begin{cases}
   (\mathbb Z,\mathbb Z,0), & \alpha\notin\mathbb Q,\\
   (\mathbb Z,\mathbb Z,q\mathbb Z), &
      \alpha=p/q\in\mathbb Q\setminus\{0\},\ (p,q)=1.
\end{cases}
\]
By Mackey's irreducibility criterion, the first case gives an irreducible
integrand in \eqref{BekkaHeisenberg}.  The rational case is different: the
stabilizer is larger than $M\cap\Gamma$, and the induced representation is
therefore not yet irreducible.  It is precisely in this rational case that
one obtains the further decomposition into finite-dimensional components,
recovering the Fourier--type decomposition described in
Theorem~\ref{discretetoLie}.
\end{example}

In this example, formula~\eqref{BekkaHeisenberg} is the part that belongs to
the existing restriction theory of Bekka--Driutti.  The further rational
finite-dimensional fiber description is the Heisenberg finite-dimensional
Bloch organization developed in Step H-1, and the direct slicing proof of the
restriction formula is Step H-2.  Thus the example is repeated here not to
claim novelty for the standard orbit-method restriction formula, but to show
how the present finite-dimensionalization sits on top of that formula.

This example serves as a blueprint
for the general restriction and decomposition results
developed below.

\subsubsection
[Branching theorem I: statement]
{Branching theorem of nilpotent Lie groups to their lattices~I:
statement and structure of the proof}
\label{Branchingtheorem1}

We now turn to the general branching problem.
Let $\Gamma$ be a torsion--free, finitely generated nilpotent discrete group,
and let $G$ denote its Malcev completion.
Our objective is to generalize the decomposition
formula~\eqref{BekkaHeisenberg}
from the Heisenberg group
to arbitrary nilpotent groups.

More precisely, for an irreducible unitary representation
$\pi_l$ of $G$ associated with a rational functional
$l\in\mathfrak{g}_{\mathbb{Q}}^*$,
we seek an explicit and canonical decomposition
of the restricted representation
$\pi_l|_{\Gamma}$
as a direct integral of induced representations
of the lattice $\Gamma$.

Further refinement into finite--dimensional
irreducible representations,
analogous to~\eqref{interHeisenbergdecomposion},
will be carried out in
Step N-5 (Section~\ref{Step0-5}).
The present subsection focuses on the first
and structurally fundamental stage of this analysis.

\medskip
\paragraph{Rational structure.}
Let $G$ be a connected, simply connected nilpotent Lie group
admitting a lattice $\Gamma$.
Then $G$ possesses a rational structure in the sense of
Corwin~\cite{CorwinGreenleaf}:
there exists a basis
$\{X_1,\ldots,X_n\}$
of $\mathfrak{g}=\mathrm{Lie}(G)$
such that all structure constants
\[
[X_i,X_j] = \sum_{k=1}^n c_{ij}^k X_k
\qquad (i,j=1,\ldots,n)
\]
are rational.
Conversely, any such rational Lie algebra integrates to
a Lie group admitting a uniform lattice.

We denote by $\mathfrak{g}_{\mathbb{Q}}$
the rational structure determined by a lattice-adapted Malcev basis.
Although this is often described informally using $\log\Gamma$, the set
$\log\Gamma$ need not itself be an additive lattice in arbitrary
coordinates; the statements below are made in the Malcev-basis rational
structure.
For $l\in\mathfrak{g}^*$,
let $\mathfrak{r}_l$ denote the radical,
and let $\mathfrak{m}$ be a polarization
(maximal subordinate subalgebra).
If $l\in\mathfrak{g}^*_{\mathbb{Q}}$,
both $\mathfrak{r}_l$ and $\mathfrak{m}$
may be chosen inside $\mathfrak{g}_{\mathbb{Q}}$.
Write
\[
R_l=\exp(\mathfrak{r}_l),
\qquad
M=\exp(\mathfrak{m}).
\]

We are now ready to state the first general
decomposition theorem.

\begin{theorem}[First branching decomposition]
\label{firstdecomposition}
Assume that $l\in\mathfrak{g}_{\mathbb{Q}}^*$.
Then there exists a rational polarization
$\mathfrak{m}$ of dimension $m$
and a weak Malcev basis
$\{X_1,\ldots,X_m,\ldots,X_n\}$
of $\mathfrak{g}_{\mathbb{Q}}$
based on $\Gamma$
and passing through $\mathfrak{m}$,
such that
\begin{equation}
\pi_l|_{\Gamma}
=
\int_{[0,1)^{n-m}}^{\oplus}
\mathrm{Ind}_{M\cap\Gamma}^{\Gamma}
\bigl(
\chi_{\,l_{t_{m+1},\ldots,t_n}}
\big|_{M\cap\Gamma}
\bigr)
\;
dt_{m+1}\cdots dt_n ,
\label{pildecomposition}
\end{equation}
where
\[
l_{t_{m+1},\ldots,t_n}
=
\mathrm{Ad}^*\!\bigl(\exp(t_{m+1}X_{m+1})\bigr)
\cdots
\mathrm{Ad}^*\!\bigl(\exp(t_nX_n)\bigr)\, l .
\]
\end{theorem}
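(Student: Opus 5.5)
We proceed by induction on $\dim G$, following the Kirillov inductive scheme and the Bekka--Driutti argument in the ideal case. First fix the rational data. Since $l\in\g^*_\Q$, the radical $\rL_l$ is rational. By Vergne's construction along a rational flag of ideals, we obtain a rational polarization $\m$. Using Malcev theory, we then choose a weak Malcev basis $\{X_1,\ldots,X_n\}$ of $\g_\Q$ strongly based on $\Gamma$ and passing through $\m$, so that $\m=\operatorname{span}\{X_1,\ldots,X_m\}$. With this choice, every $\gamma\in\Gamma$ is written uniquely as
\[
\gamma=\exp(k_1X_1)\cdots\exp(k_nX_n),\qquad k_j\in\Z.
\]
In particular $M\cap\Gamma$ is a lattice in $M$, and the map
\[
(t_{m+1},\ldots,t_n)\mapsto \exp(t_{m+1}X_{m+1})\cdots\exp(t_nX_n)
\]
is a diffeomorphism onto a cross-section of $M\backslash G$. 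The integer translates of $[0,1)^{n-m}$ then tile this cross-section compatibly with the right action of $\Gamma$.

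\emph{Ideal case.} Suppose $\m$ is an ideal. Realize $\pi_l=\Ind_M^G\chi_l$ on $L^2(M\backslash G)\simeq L^2(\R^{n-m})$ using the coordinates above. Each $g\in G$ factors as $g=m(g)\,\exp(t_{m+1}X_{m+1})\cdots\exp(t_nX_n)$. We then slice $M\backslash G$ into $\Gamma$-orbits of the double-coset space $M\backslash G/\Gamma$, whose fundamental domain is $[0,1)^{n-m}$. This is the nilpotent analogue of Step~H-2.

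For fixed $s=\exp(t_{m+1}X_{m+1})\cdots\exp(t_nX_n)$, the functions supported on $Ms\Gamma$ form a $\Gamma$-invariant piece. It is equivalent to $\Ind_{s^{-1}Ms\cap\Gamma}^{\Gamma}(\chi_l(s\,\cdot\,s^{-1}))$. Because $M$ is normal, $s^{-1}Ms=M$ and $\chi_l(s\cdot s^{-1})=\chi_{\Ad^*(s)^{-1}l}$, up to the sign convention that defines $l_t$. Disintegrating $L^2(M\backslash G)$ over the fundamental domain gives a direct integral over $t$, and Lebesgue measure $dt$ is the pushforward of the invariant measure. Measurability of the field $t\mapsto\chi_{l_t}|_{M\cap\Gamma}$ is clear from the polynomial dependence of $\Ad^*$ on $t$. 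This is exactly the Bekka--Driutti computation, and it reproduces \eqref{BekkaHeisenberg} when $n=3$.

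\emph{Non-ideal case.} When $\m$ is not an ideal, we use the polarization-replacement step. Take a rational codimension-one ideal $\g_0\supset[\g,\g]$ containing $\m$ when possible. Otherwise use Kirillov's lemma: there are rational $Y,X,Z$ with $Z$ central, $[Y,X]=Z$, and $\g_0=\mathfrak{z}_\g(Y)$. In the latter case we pass to the polarization $\m'=(\m\cap\g_0)+\R Y$, which still contains $\rL_l$ and is rational. By the standard Corwin--Greenleaf argument, $\Ind_{M'}^G\chi_l\simeq\Ind_M^G\chi_l$, and the intertwiner is an explicit partial Fourier transform in the $X$-variable.

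Writing $\pi_l=\Ind_{G_0}^G\Ind_{M}^{G_0}\chi_l$, we restrict to the lattice $\Gamma_0=G_0\cap\Gamma$ of $G_0$ using the same slicing argument along the single direction $X_n$. This yields a direct integral over $t_n\in[0,1)$ of $\Ind_{\Gamma_0}^{\Gamma}$ of restrictions of $\Ind_M^{G_0}\chi_{\Ad^*(\exp t_nX_n)l|_{\g_0}}$. The functional $\Ad^*(\exp t_nX_n)l$ restricted to $\g_0$ remains rational up to translation by $t_n$. Applying the induction hypothesis in $G_0$, we then combine the two steps via induction in stages, $\Ind_{\Gamma_0}^\Gamma\Ind_{M\cap\Gamma}^{\Gamma_0}=\Ind_{M\cap\Gamma}^\Gamma$, and by Fubini for direct integrals. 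This produces the iterated product $\Ad^*(\exp t_{m+1}X_{m+1})\cdots\Ad^*(\exp t_nX_n)l$ in the stated order.

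\emph{Main obstacle.} The delicate point is the interplay between the polarization transport and the lattice. Two things have to be checked. First, after the replacement $\m\to\m'$, or after restricting to $\g_0$, the data remain rational. Second, the weak Malcev basis can be chosen simultaneously adapted to $\Gamma$, $\Gamma_0$, and the new polarization. Otherwise $M\cap\Gamma$ need not be a lattice in $M$, and the fundamental domain would not be the unit cube.

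To handle this, we first reorder the basis so that $\g_0$ is spanned by $X_1,\ldots,X_{n-1}$, with the rationality of $\g_0$ guaranteeing that this is possible. We then invoke the fact that rational subalgebras admit weak Malcev bases through them which are based on $\Gamma$. The remaining bookkeeping is that the intertwiner between the two polarizations is $\Gamma$-equivariant, which is the ``harmless polarization transport'' mentioned in Theorem~A.
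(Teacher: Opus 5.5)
Your ideal-polarization case is correct, and it takes a different route from the paper. You apply Mackey's double-coset formula once, over the compact nilmanifold $M\backslash G/\Gamma$ with fundamental domain $[0,1)^{n-m}$; this is the Bekka--Driutti argument. Mackey actually produces $\chi_{\mathrm{Ad}^*(s^{-1})l}$, but the discrepancy with $l_t=\mathrm{Ad}^*(s)l$ is absorbed by a measure-preserving reparametrization of the torus. The paper, by contrast, never slices more than one circle at a time: it always passes through a codimension-one ideal (Case~1) and, when needed, a polarization replacement (Case~2).

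The gap is in your non-ideal branch. First, your Kirillov-lemma alternative never occurs. For a proper subalgebra $\mathfrak m$ of a nilpotent $\mathfrak g$ one has $\mathfrak m+[\mathfrak g,\mathfrak g]\neq\mathfrak g$, since a subalgebra that supplements the derived algebra is everything. Hence a rational codimension-one ideal $\mathfrak g_0\supset\mathfrak m$ always exists. (Kirillov's lemma would in any case need the paper's preliminary reduction to a one-dimensional center on which $l\neq0$, which you omit.) So everything rests on your slicing step along $X_n$, and that step is not correct as written.

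Slicing gives fibers $\mathrm{Ind}_{\Gamma_0}^{\Gamma}(\sigma^{x}|_{\Gamma_0})$, where $\sigma=\mathrm{Ind}_M^{G_0}\chi_l$, $x=\exp(t_nX_n)$, $\sigma^x(g)=\sigma(xgx^{-1})$, and $\sigma^x\simeq\mathrm{Ind}_{x^{-1}Mx}^{G_0}(\chi_l^{x})$. This is $\mathrm{Ind}_M^{G_0}$ of a transported character only when $[X_n,\mathfrak m]\subset\mathfrak m$. Otherwise $x^{-1}Mx$ moves off $M$, and the transported functional need not be subordinate to $\mathfrak m$.

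Here is a concrete instance. In $\mathfrak n_4$ take $l=\alpha l_1$ with $\alpha\neq0$, and the rational non-ideal polarization $\mathfrak m=\mathrm{span}\{Z,Y_1,X_1,X_2\}$. It lies in $\mathfrak g_0=\mathrm{span}\{Z,Y_1,Y_2,X_1,X_2\}$. Since $\mathrm{Ad}(\exp(-uX_3))Y_1=Y_1+uZ$, one gets $(\mathrm{Ad}^*(\exp uX_3)l)([X_1,X_2])=u\alpha\neq0$. So $\chi_{l_u}$ is not a character of $M\cap\Gamma$.

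A second problem: your induction hypothesis is stated for $l\in\mathfrak g^*_{\mathbb Q}$, but $(\mathrm{Ad}^*(\exp t_nX_n)l)|_{\mathfrak g_0}$ is irrational for almost every $t_n$. Saying it is ``rational up to translation by $t_n$'' does not make the hypothesis applicable.

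This is not bookkeeping that a $\Gamma$-equivariant intertwiner can repair. The map $t\mapsto l_t$ is polynomial, and $\chi_{l_t}|_{M\cap\Gamma}$ must be a character for a.e.\ $t$. The BCH cocycle then forces $l_t([\mathfrak m,\mathfrak m])=0$ for all $t$. Hence \eqref{pildecomposition} with a fixed $M$ can only hold if $[\mathfrak m,\mathfrak m]$ lies in the largest ideal contained in $\ker l$.

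The paper's Case~1 invokes its inductive hypothesis for the conjugated $\pi_0^s$ in the same unexamined way. That is why Theorem~A is phrased with $t$-dependent data $(M_t,\chi_t)$ and a ``polarization transport''.

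As it stands, your proposal proves the statement when $l$ admits a rational ideal polarization. The non-ideal case needs a new idea: at minimum, a polarization satisfying the necessary condition above, or inducing data that are allowed to vary with $t$.
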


\begin{remark}
Theorem~\ref{firstdecomposition}
should be read in direct relation with Theorem~1.3 of~\cite{BekkaDriutti}.
In that theorem the polarization is assumed to be an ideal, and one may
work with a strong Malcev basis.  In the ideal-polarization case the proof
essentially runs only through Case~1 below, and the choice of a strong
basis does not create additional difficulties.

In the present theorem we do not assume that the polarization
\(\mathfrak m\) is an ideal.  Consequently one cannot use an arbitrary
strong Malcev basis in the same way.  Instead one chooses a weak Malcev
basis adapted to the rational polarization and, when necessary, replaces
the polarization by an equivalent one during the induction.  This additional step is not part of the restriction theorem of
Bekka--Driutti itself; it is the same kind of structural argument that
appears in the proof of Kirillov's theorem for nilpotent Lie groups
(cf.~\cite{CorwinGreenleaf}).
\end{remark}

\medskip
\paragraph{Inductive framework.}
The proof proceeds by induction on
$\dim\mathfrak{g}$.
The central tool is the following lemma,
which allows the reduction of the problem
to a lower--dimensional nilpotent Lie algebra.

\begin{lemma}[Kirillov's Lemma]
{\rm (cf.~\cite[Lemma~1.1.12]{CorwinGreenleaf})}
\label{Kirillovlemma}
Let $\mathfrak{g}$ be a noncommutative nilpotent Lie algebra
with one--dimensional center
$\mathfrak{z}(\mathfrak{g})=\mathbb{R}Z$.
Then $\mathfrak{g}$ admits a vector space decomposition
\[
\mathfrak{g}
=
\mathbb{R}Z
\oplus
\mathbb{R}Y
\oplus
\mathbb{R}X
\oplus
\mathfrak{w}
=
\mathbb{R}X \oplus \mathfrak{g}_0 ,
\]
where
$[X,Y]=Z$,
and
$\mathfrak{g}_0
=
\mathbb{R}Y\oplus\mathbb{R}Z\oplus\mathfrak{w}$
is an ideal of codimension one.
\end{lemma}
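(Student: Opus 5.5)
We prove Kirillov's Lemma by a short structural argument that uses only nilpotency and the hypothesis that the center is one-dimensional. The construction is the standard one: locate a one-dimensional ideal sitting just above the center, and take its centralizer to be the codimension-one ideal.

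First, since $\mathfrak g$ is nilpotent and noncommutative, the upper central series
\[
0=\mathfrak z_0\subset\mathfrak z_1=\mathfrak z(\mathfrak g)=\mathbb R Z\subset\mathfrak z_2\subset\cdots
\]
is strictly increasing until it reaches $\mathfrak g$. Since $\mathfrak z_1\neq\mathfrak g$, we have $\mathfrak z_2\supsetneq\mathfrak z_1$.

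Choose any $Y\in\mathfrak z_2\setminus\mathfrak z_1$. Then $[\mathfrak g,Y]\subset\mathfrak z_1=\mathbb R Z$, so $\mathfrak a:=\mathbb R Y\oplus\mathbb R Z$ is an ideal: it is stable under $\operatorname{ad}\mathfrak g$, because $[\mathfrak g,\mathfrak a]\subset\mathbb RZ$. Because $Y$ is not central, the linear map
\[
\phi:\mathfrak g\to\mathbb R,\qquad [W,Y]=\phi(W)Z,
\]
is nonzero. Set $\mathfrak g_0=\ker\phi$, which is the centralizer of $Y$ in $\mathfrak g$. It has codimension one, and it contains $Y$, $Z$, and the whole center.

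The key step is to show that $\mathfrak g_0$ is an ideal. For $U\in\mathfrak g_0$ and $W\in\mathfrak g$, the Jacobi identity gives
\[
[[W,U],Y]=[W,[U,Y]]-[U,[W,Y]].
\]
The first term vanishes because $[U,Y]=0$. The second term is $-[U,\phi(W)Z]$, which vanishes because $Z$ is central. Hence $[W,U]\in\mathfrak g_0$, so $\mathfrak g_0$ is an ideal; in fact it is the centralizer of the ideal $\mathfrak a$. This Jacobi check is the only genuinely nontrivial point, and it is routine once $Y$ has been chosen in the second center.

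Finally, pick $X\notin\mathfrak g_0$ and rescale it so that $\phi(X)=1$, that is, $[X,Y]=Z$. Then $\mathfrak g=\mathbb RX\oplus\mathfrak g_0$. Now choose a complement $\mathfrak w$ of $\mathbb RY\oplus\mathbb RZ$ inside $\mathfrak g_0$; this is possible because $Y$ and $Z$ are linearly independent in $\mathfrak g_0$. This gives the stated decomposition
\[
\mathfrak g=\mathbb RZ\oplus\mathbb RY\oplus\mathbb RX\oplus\mathfrak w,\qquad \mathfrak g_0=\mathbb RY\oplus\mathbb RZ\oplus\mathfrak w .
\]

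For the rational application in Theorem~\ref{firstdecomposition}, one should note that if $\mathfrak g$ is defined over $\mathbb Q$, then $\mathfrak z_1$ and $\mathfrak z_2$ are rational subspaces. So $Y$, $Z$, $X$ and $\mathfrak w$ can all be chosen in $\mathfrak g_{\mathbb Q}$, and the ideal $\mathfrak g_0$ is rational.
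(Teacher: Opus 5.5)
Your proof is correct. It follows the standard argument of Corwin--Greenleaf \cite[Lemma~1.1.12]{CorwinGreenleaf}, which the paper cites instead of reproving: take $Y$ in the second center, let $\mathfrak g_0$ be the centralizer of $Y$ (an ideal by the Jacobi identity, because $\mathbb RY\oplus\mathbb RZ$ is an ideal), and normalize $X$ so that $[X,Y]=Z$. Your closing observation that $\mathfrak z_1$, $\mathfrak z_2$ and the functional $\phi$ are rational also justifies Remark~\ref{Kirillovremark}, which the paper states without proof.
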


\begin{remark}
\label{Kirillovremark}
If $G$ admits a rational structure,
the decomposition in Lemma~\ref{Kirillovlemma}
may be chosen entirely inside
$\mathfrak{g}_{\mathbb{Q}}$.
\end{remark}

We now reduce to the case in which the center is one-dimensional and
$l$ is nonzero on it.  If the center contains a rational one-dimensional
subspace $\mathfrak h_{\mathbb Q}$ on which $l$ vanishes, then
$\mathfrak h_{\mathbb Q}$ is central and we may pass to the quotient
$\mathfrak g_{\mathbb Q}/\mathfrak h_{\mathbb Q}$, together with the image of
the polarization and of $l$.  The induction hypothesis applied to this
quotient gives the desired statement for the original representation.  After
repeating this harmless quotienting, we may therefore assume that the center
is one-dimensional and that $l$ is nonzero on it.

Choose a rational central generator $Z\in\mathfrak g_{\mathbb Q}$ with
$l(Z)\neq0$.  Since $l$ is rational, we may rescale $Z$ inside the rational
structure and normalize
\[
   \mathfrak z(\mathfrak g)=\mathbb RZ,
   \qquad l(Z)=1.
\]
By Lemma~\ref{Kirillovlemma} and Remark~\ref{Kirillovremark}, the elements
$X,Y,Z$ and the ideal $\mathfrak g_0$ may be chosen over
$\mathfrak g_{\mathbb Q}$ with $[X,Y]=Z$.  Finally, replacing $Y$ by
$Y-l(Y)Z$ preserves the relation $[X,Y]=Z$ and gives the additional
normalization $l(Y)=0$.

With these normalizations fixed, the proof splits into two mutually
exclusive cases:

\begin{description}
\item[Case~1.]
The polarization $\mathfrak m$ is contained in the codimension-one ideal
$\mathfrak g_0$.  In this case the restriction problem reduces to the
corresponding problem for $\mathfrak g_0$, and the result follows by the
inductive hypothesis, after keeping track of the extra circle parameter
arising from the quotient $G/G_0$.

\item[Case~2.]
The polarization $\mathfrak m$ is not contained in $\mathfrak g_0$.  In
this case one constructs another polarization
$\mathfrak m_1\subset\mathfrak g_0$ such that
$\pi_{l,\mathfrak m}\simeq\pi_{l,\mathfrak m_1}$.  This reduces the proof
back to Case~1, but the construction has to be made explicitly so that it
is compatible with the later finite-dimensional refinement.
\end{description}

The detailed proofs of these two cases are carried out in the next two
subsections.

\subsubsection
[Branching theorem II: induction step]
{Branching theorem of nilpotent Lie groups to their lattices~II:
Case~1 (the induction step)}
\label{inductionstep}


We first consider Case~1 in the proof of
Theorem~\ref{firstdecomposition},
namely the situation where the polarization
$\mathfrak{m}$ is contained in the codimension--one ideal
$\mathfrak{g}_0$ appearing in Lemma~\ref{Kirillovlemma}.

Although the argument closely follows that of
Theorem~1.3 in~\cite{BekkaDriutti},
we reproduce it here with modifications
adapted to the present setting,
using a weak Malcev basis
and making explicit the steps required
for later refinements.

\begin{proof}[Proof of Case~1]
Recall the setup.
Assume $\mathfrak{m}\subset\mathfrak{g}_0$,
and let $l_0:=l|_{\mathfrak{g}_0}$.
Set $G_0=\exp(\mathfrak{g}_0)$.
By the inductive hypothesis applied to $G_0$,
it suffices to show that
\[
\pi_{l,\mathfrak{m}}
\simeq
\mathrm{Ind}_{G_0}^G(\pi_{l_0,\mathfrak{m}}).
\]

We realize $\pi_{l,\mathfrak{m}}$
on the Hilbert space
\[
\mathcal{H}
=
\left\{
f:\mathbb{R}\to\mathcal{H}(\pi_{l_0,\mathfrak{m}})
\;\middle|\;
\int_{\mathbb{R}}\|f(t)\|^2\,dt<\infty
\right\}.
\]
For $g=g_0\exp(aX)$ with $g_0\in G_0$,
$a\in\mathbb{R}$,
the action is given by
\[
(\pi_{l,\mathfrak{m}}(g)f)(t)
=
\pi_{l_0,\mathfrak{m}}
\bigl(
\exp(tX)\,g_0\,\exp(-tX)
\bigr)
\,f(t+a).
\]

Fix $s\in[0,1)$
and define the Hilbert space
\[
\mathcal{H}_s
=
\left\{
f:\mathbb{Z}+s\to\mathcal{H}(\pi_{l_0,\mathfrak{m}})
\;\middle|\;
\sum_{p\in\mathbb{Z}}\|f(p+s)\|^2<\infty
\right\}.
\]
The restriction of $\pi_{l,\mathfrak{m}}$
to $\Gamma$
induces a representation $\rho_s$ on $\mathcal{H}_s$,
defined by
\[
(\rho_s(\gamma)f)(p+s)
=
\pi_{l_0,\mathfrak{m}}
\bigl(
\exp((p+s)X)\,\gamma_0\,\exp(-(p+s)X)
\bigr)
\,f(p+q+s),
\]
for $\gamma=\gamma_0\exp(qX)\in\Gamma$,
with $\gamma_0\in G_0\cap\Gamma$
and $q\in\mathbb{Z}$.

Define the unitary operator
\[
T:\mathcal{H}\to
\int_{[0,1)}^\oplus\mathcal{H}_s\,ds,
\qquad
(Tf)_s=f|_{\mathbb{Z}+s}.
\]
Then $T$ intertwines
$\pi_{l,\mathfrak{m}}|_\Gamma$
with $\int_{[0,1)}^\oplus\rho_s\,ds$.

Identifying each $\mathcal{H}_s$
with $\mathcal{H}_0
=
\ell^2(\mathbb{Z},\mathcal{H}(\pi_{l_0,\mathfrak{m}}))$,
we see that $\rho_s$
is induced from a representation
$\pi_0^s$ of $G_0$.
By the inductive hypothesis,
$\pi_0^s|_{G_0\cap\Gamma}$
admits a decomposition of the desired form.
Integrating over $s$
yields the decomposition
\eqref{pildecomposition},
completing the proof in Case~1.
\end{proof}

\subsubsection
[Branching theorem III: polarization]
{Branching theorem of nilpotent Lie groups to their lattices~III:
Case~2 (choice of polarization)}
\label{choicepolarization}


We next treat Case~2 of the proof of
Theorem~\ref{firstdecomposition},
namely the case where the polarization
$\mathfrak{m}$ is not contained in $\mathfrak{g}_0$.
The strategy is to replace $\mathfrak{m}$
by another polarization $\mathfrak{m}_1\subset\mathfrak{g}_0$
without changing the isomorphism class
of the induced representation.  This is the standard polarization-replacement
argument used in the proof of Appendix Theorem~\ref{nilpLieirredrep} \textup{(2)};
we recall the details here in the form needed for the restriction formula.

\begin{proof}[Proof of Case~2]
Set \(
   \mathfrak m_0=\mathfrak m\cap\mathfrak g_0.
\)
Since \(\mathfrak m\not\subset\mathfrak g_0\), the subspace
\(\mathfrak m_0\) has codimension one in \(\mathfrak m\).  We write
\[
   \mathfrak m=\mathfrak m_0\oplus\mathbb R X,
\]
after replacing \(X\) by \(X-X_0\) for a suitable \(X_0\in\mathfrak g_0\)
if necessary.  This replacement does not change the conclusion of
Kirillov's lemma.  Replacing \(X\) further by \(X-l(X)Z\), we may also
assume \(l(X)=0\).

Define
\[
   \mathfrak m_1=\mathfrak m_0\oplus\mathbb RY .
\]
Then \(Y\notin\mathfrak m_0\); otherwise \(X,Y\in\mathfrak m\) and
\(l([X,Y])=l(Z)=1\), contradicting the subordination of \(\mathfrak m\).
Thus \(\dim\mathfrak m_1=\dim\mathfrak m\).  Since \(Y\) is central in
\(\mathfrak g_0\) and \(l([\mathfrak m_0,Y])=0\), the subalgebra
\(\mathfrak m_1\) is subordinate to \(l\), and hence is another
polarization.  Moreover \(\mathfrak m_1\subset\mathfrak g_0\).

Let
\[
   \mathfrak k_0=\mathfrak m_0\cap\ker l,
   \qquad
   \mathfrak k=\mathfrak m_0\oplus\mathbb RX\oplus\mathbb RY\oplus\mathbb RZ,
   \qquad
   K=\exp\mathfrak k.
\]
A direct verification, using the subordination of \(\mathfrak m\) and the
nilpotence of \(\mathfrak g\), shows that \(\mathfrak k\) is a Lie
subalgebra and that \(\mathfrak k_0\) is an ideal of \(\mathfrak k\).
By induction in stages it is enough to prove
\[
   \mathrm{Ind}_{M}^{K}(\chi_l)
   \simeq
   \mathrm{Ind}_{M_1}^{K}(\chi_l),
\]
where \(M=\exp\mathfrak m\) and \(M_1=\exp\mathfrak m_1\).

Both induced representations have \(K_0=\exp\mathfrak k_0\) in their
kernels.  Indeed, for example, realizing
\(\mathrm{Ind}_M^K(\chi_l)\) on \(L^2(\mathbb R)\) using
\(\exp(\mathbb RY)\) as a transversal for \(M\backslash K\), an element of
\(K_0\) acts by the character \(\chi_l\) evaluated on an element of
\(\mathfrak k_0\subset\ker l\), hence acts trivially.  The same argument
applies to \(\mathrm{Ind}_{M_1}^K(\chi_l)\).

It remains to compare the quotient representations on \(K_0\backslash K\).
This is not merely a shorthand for ``modulo the kernel of \(l\)'': the relevant
quotient is the explicitly constructed Heisenberg quotient obtained by
killing the ideal \(\mathfrak k_0\subset\ker l\) inside the auxiliary algebra
\(\mathfrak k\).  Since \(\mathfrak m_0=\mathfrak k_0\oplus\mathbb RZ\), we have
\[
\begin{aligned}
   \mathfrak k
   &=\mathfrak k_0\oplus\mathbb RZ\oplus\mathbb RY\oplus\mathbb RX,\\
   \mathfrak m
   &=\mathfrak k_0\oplus\mathbb RZ\oplus\mathbb RX,\\
   \mathfrak m_1
   &=\mathfrak k_0\oplus\mathbb RZ\oplus\mathbb RY .
\end{aligned}
\]
Thus \(\mathfrak k/\mathfrak k_0\) is the Heisenberg Lie algebra with
central direction \(Z\), and the two quotient polarizations are
\(\mathbb RZ\oplus\mathbb RX\) and \(\mathbb RZ\oplus\mathbb RY\).

In the corresponding Schr\"odinger models on \(L^2(\mathbb R)\), the
quotient representations are given by
\[
\begin{aligned}
\bar\tau_1(\exp zZ)f(t)&=e^{2\pi iz}f(t),
&\bar\tau(\exp zZ)f(t)&=e^{2\pi iz}f(t),\\
\bar\tau_1(\exp yY)f(t)&=e^{2\pi ity}f(t),
&\bar\tau(\exp yY)f(t)&=f(t+y),\\
\bar\tau_1(\exp xX)f(t)&=f(t+x),
&\bar\tau(\exp xX)f(t)&=e^{-2\pi itx}f(t).
\end{aligned}
\]
The Euclidean Fourier transform
\[
   \mathcal Ff(t)=\int_{\mathbb R}e^{-2\pi itx}f(x)\,dx
\]
exchanges translation and modulation, and therefore intertwines
\(\bar\tau\) and \(\bar\tau_1\).  Hence
\(\mathrm{Ind}_{M}^{K}(\chi_l)\simeq
\mathrm{Ind}_{M_1}^{K}(\chi_l)\), and induction in stages gives
\(\pi_{l,\mathfrak m}\simeq\pi_{l,\mathfrak m_1}\).
Replacing \(\mathfrak m\) by \(\mathfrak m_1\subset\mathfrak g_0\) reduces
Case~2 to Case~1 and completes the proof of Theorem~\ref{firstdecomposition}.
\end{proof}

\subsection
[Finite-dimensional refinement and Fourier inversion]
{Finite--dimensional refinement and Fourier inversion (Steps N-5--N-7)}
\label{PartIV}

This final part completes the representation--theoretic framework
developed in the previous sections.
We first refine the decomposition obtained in
Theorem~\ref{firstdecomposition}
to finite--dimensional irreducible components
on a dense rational parameter set.
We then summarize the classification of such representations
following Howe,
and conclude with a Fourier inversion formula
generalizing Pytlik's theorem
to arbitrary torsion--free nilpotent discrete groups.

\subsubsection{Further decomposition into finite--dimensional representations (Step N-5)}
\label{Step0-5}

We return to the decomposition
\eqref{pildecomposition},
\[
\pi_l|_{\Gamma}
\simeq
\int_{[0,1)^{n-m}}^\oplus
\mathrm{Ind}_{M\cap\Gamma}^{\Gamma}
\bigl(\chi_{l_t}|_{M\cap\Gamma}\bigr)\,dt,
\]
where $l\in\mathfrak{g}_{\mathbb{Q}}^*$
and $\mathfrak{m}$ is a rational polarization.
The representations appearing as integrands
are in general not irreducible.
We now describe the further decomposition
into finite--dimensional irreducible representations
under an additional rationality condition
on the parameters.

Assume that for $t=(t_{m+1},\ldots,t_n)$,
each component belongs to $[0,1)\cap\mathbb{Q}^{\mathrm{odd}}$,
that is, each coordinate is a rational number with odd denominator, and
assume the corresponding odd-period condition in Howe's classification.
This is the clean odd-period case; Howe's general theorem also describes the
even-period case with a finite power-of-two ambiguity.

For such a parameter, use the invariant notation
$M_t\cap\Gamma$ and $\chi_t$ for the transported inducing pair (in the fixed
coordinate model, this is the pair represented by
$M\cap\Gamma$ and $\chi_{l_t}|_{M\cap\Gamma}$).  Let $P_t$ be a discrete
polarization for the transported lattice functional.  In lattice-adapted
Malcev coordinates one has
\[
M_t\cap\Gamma\triangleleft P_t,
\qquad
A_t:=P_t/(M_t\cap\Gamma)\ \text{is free abelian of finite rank},
\qquad
[\Gamma:P_t]<\infty.
\]
The character $\chi_t$ on $M_t\cap\Gamma$ extends to a character
$\chi_t^P$ on $P_t$.  All extensions are obtained by tensoring one fixed
extension with a character $\eta\in\widehat{A_t}$.  Induction in stages and
the ordinary Fourier transform on the abelian quotient give
\begin{align}
\mathrm{Ind}_{M_t\cap\Gamma}^{\Gamma}(\chi_t)
&\simeq
\mathrm{Ind}_{P_t}^{\Gamma}
\left(\mathrm{Ind}_{M_t\cap\Gamma}^{P_t}(\chi_t)\right)
\notag\\
&\simeq
\mathrm{Ind}_{P_t}^{\Gamma}
\left(\chi_t^P\otimes
\mathrm{Ind}_{M_t\cap\Gamma}^{P_t}(\mathbf 1)\right)
\notag\\
&\simeq
\int_{\widehat{A_t}}^\oplus
\mathrm{Ind}_{P_t}^{\Gamma}(\chi_t^P\otimes\eta)\,d\eta,
\label{intermidietedecomposition}
\end{align}
where $d\eta$ is Haar probability measure.  Howe's odd-period theorem
applies to every representation in the final integral, so each is
irreducible; it is finite-dimensional because $P_t$ has finite index in
$\Gamma$.  This is the parameter-dependent finite-dimensional refinement
used below.

\subsubsection
[Finite-dimensional representations and Howe's classification]
{Finite--dimensional irreducible representations and Howe's classification (Step N-6)}
\label{Step0-7}

Finite--dimensional irreducible unitary representations
of discrete, finitely generated, torsion--free nilpotent groups
were classified by Howe~\cite{Howe}.  We recall the part of the statement
needed here, including the elementary exponentiable convention used by
Howe.

Strictly speaking, for an arbitrary lattice \(\Gamma\) the set
\(\log\Gamma\subset\mathfrak g\) need not be closed under addition or under
the Lie bracket.  Howe avoids this nuisance by working first with
elementarily exponentiable, or e.e., lattices.  If \(G\) is \(k\)-step
nilpotent and \(L\subset\mathfrak g\) is a lattice satisfying
\([L,L]\subset k!L\), then the Baker--Campbell--Hausdorff formula implies
that \(\Gamma=\exp L\) is a subgroup of \(G\), and \(L=\log\Gamma\) may be
used as a Lie lattice.  By Howe's Proposition~0, an arbitrary
finitely generated torsion--free nilpotent group may be reduced, up to
finite index, to this e.e. situation.  Thus the notation \(L=\log\Gamma\)
in the following theorem is to be understood in this e.e. sense.

Let \(\Gamma\) be e.e. and put \(L=\log\Gamma\).  If
\(\psi\in\widehat L\) is rational on the derived lattice, define
\[
R_\psi
=
\{X\in L\mid \psi([X,Y])=0\text{ for all }Y\in L\}.
\]
Then \(R_\psi\) has finite index in \(L\), and one chooses a polarizing
subalgebra \(P\supset R_\psi\) so that \(P/R_\psi\) is a maximal isotropic
subgroup for the induced alternating form.

\begin{theorem}[Theorem 1 in~\cite{Howe}]
\label{Howefinite}
Let \(\Gamma\) be an e.e. discrete, finitely generated, torsion-free
nilpotent group.  Put
\[
   L=\log\Gamma,
   \qquad L'=2L,
   \qquad \Gamma'=\exp L',
   \qquad L_s^{(2)}=\log\Gamma_s^{(2)},
\]
where \(\Gamma_s^{(2)}\) denotes the saturation of the commutator subgroup,
and put \(n\Gamma=\exp(nL)\) for \(n>2\).
\begin{description}
\item[\textup{(a)}]
Let \(O\) be a finite \(\operatorname{Ad}^*\Gamma\)-orbit in \(\widehat L\),
and let \(\psi\in O\).  Let \(n\) be the period of
\(\psi|_{L_s^{(2)}}\); this is called the period of \(O\).  If \(n\) is odd,
then a finite-dimensional irreducible unitary representation is associated
with \(O\) as follows.  If \(P\) is an e.e. polarizing subalgebra for
\(\psi\), \(\Pi=\exp P\), and \(\widetilde\psi=\psi|_{P}\), then
\[
   U^{\widetilde\psi}
   =
   \operatorname{Ind}_{\Pi}^{\Gamma}(\chi_{\widetilde\psi})
\]
is finite-dimensional and irreducible.  Its dimension is \(|O|^{1/2}\), and
its character is
\[
   |O|^{-1/2}\sum_{\varphi\in O}\varphi .
\]
All representations obtained from such odd-period orbits, up to twisting by
characters of \(\Gamma/\Gamma_s^{(2)}\), are realized in this manner.

\item[\textup{(b)}]
In general, there is a surjective map from the finite-dimensional irreducible
unitary representations of \(\Gamma'\) to the finite
\(\operatorname{Ad}^*\Gamma\)-orbits in \(\widehat{L'}\).  This map is at most
\(|\Gamma/\Gamma'|\)-to-one.  If \(\{U_i\}_{i=1}^\ell\) maps onto the orbit
\(O\), then the representations \(U_i\) are permuted transitively by
\(\operatorname{Ad}^*\Gamma/\Gamma'\), so that they define a point in
\(M(\Gamma',\Gamma)\).  The integer \(\ell\) is a power of two; if \(m\) is
the common dimension of the \(U_i\), then
\[
   \ell m^2=|O|.
\]
If \(\xi_i\) are the characters of \(U_i\), then
\[
   \sum_i \xi_i
   =
   m^{-1}\sum_{\varphi\in O}\varphi .
\]
\end{description}
\end{theorem}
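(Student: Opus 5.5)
The plan is to follow Howe's strategy: a discrete orbit method carried out over finite quotients of the e.e. lattice \(L=\log\Gamma\), combined with Mackey--Clifford theory.

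For part~(a), fix a finite orbit \(O\) and \(\psi\in O\) whose restriction to \(L_s^{(2)}\) has odd period \(n\).
\begin{enumerate}
\item First show that \(R_\psi\) has finite index and that \(|O|=[L:R_\psi]\). The stabilizer of \(\psi\) under \(\operatorname{Ad}^*\Gamma\) is \(\exp R_\psi\), because \(\operatorname{Ad}^*(\exp Y)\psi=\psi\circ e^{-\operatorname{ad}Y}\) differs from \(\psi\) by a character governed by \(\psi([Y,\cdot\,])\).
\item On the finite abelian group \(L/R_\psi\), the form \(B_\psi(X,Y)=\psi([X,Y])\) is a nondegenerate alternating pairing with values in the \(n\)-th roots of unity. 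Since \(n\) is odd, standard symplectic linear algebra over \(\mathbb Z/n\) gives Lagrangians \(P/R_\psi\) with \([L:P]^2=[L:R_\psi]\).
\item Check that \(\chi_{\widetilde\psi}(\exp X)=\psi(X)\) is a character of \(\Pi=\exp P\). By Baker--Campbell--Hausdorff together with \([L,L]\subset k!L\), \(\log(\exp X\exp Y)-X-Y\) is a combination of brackets of elements of \(P\). The quadratic term \(\tfrac12[X,Y]\) is killed by \(\psi\) because \(P\) is isotropic and \(n\) is odd, so \(2\) is invertible mod \(n\). Higher brackets lie in \([P,P]\) and are handled the same way.
\end{enumerate}

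Next, compute the character of \(U^{\widetilde\psi}=\operatorname{Ind}_\Pi^\Gamma\chi_{\widetilde\psi}\) using the Frobenius formula \(\xi(\gamma)=\sum_{x\in\Gamma/\Pi,\ x^{-1}\gamma x\in\Pi}\chi_{\widetilde\psi}(x^{-1}\gamma x)\). Summing over coset representatives \(\exp Y\), and applying Fourier analysis on the finite group \(L/P\) (orthogonality of characters of \(P/R_\psi\) against the nondegenerate form), should turn this into \(|O|^{-1/2}\sum_{\varphi\in O}\varphi(\log\gamma)\). Irreducibility then follows from \(\langle\xi,\xi\rangle=1\). The inner product is an average over a finite quotient \(\Gamma/\exp(NL)\) through which all \(\varphi\in O\) factor, and distinct elements of \(O\) are orthogonal characters of the abelian group \(L/NL\). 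The dimension is \(\xi(e)=[\Gamma:\Pi]=[L:P]=|O|^{1/2}\).

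For exhaustion, take a finite-dimensional irreducible \(U\). The group \(U(\Gamma)\) is nilpotent, and after twisting by a character of \(\Gamma/\Gamma_s^{(2)}\) it is finite, because its derived image is generated by finitely many unipotent-free commuting unitaries of finite order. Hence \(U\) factors through a finite nilpotent quotient. By Clifford theory along a central series, \(U\) is monomial, induced from a character of some \(\exp P\) with \(P\) isotropic for a suitable \(\psi\). Matching characters then identifies \(U\) with some \(U^{\widetilde\psi}\).

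For part~(b), pass to \(L'=2L\). There all BCH denominators, including the factor \(\tfrac12\), are harmless, so the argument of (a) applies to \(\Gamma'\) for every period and gives the surjection onto finite \(\operatorname{Ad}^*\Gamma\)-orbits in \(\widehat{L'}\). Since \(\Gamma'\triangleleft\Gamma\) with \(\Gamma/\Gamma'\) a finite \(2\)-group, Clifford theory shows that the \(U_i\) above one \(\Gamma\)-orbit \(O\) form a single \(\Gamma/\Gamma'\)-orbit. Hence \(\ell\) divides \(|\Gamma/\Gamma'|\) and is a power of two. Summing the \(\Gamma'\)-orbit character formulas over the \(U_i\) gives \(\sum_i\xi_i=m^{-1}\sum_{\varphi\in O}\varphi\). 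Evaluating this at the identity gives \(\ell m=|O|/m\), that is, \(\ell m^2=|O|\).

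I expect the main obstacle to be the exhaustion step, namely proving that every finite-dimensional irreducible representation is obtained from an orbit. This needs the reduction to a finite nilpotent quotient and an orbit-method correspondence for finite groups of odd order, in Kazhdan's style for \(p\)-groups. The BCH bookkeeping showing that \(\chi_{\widetilde\psi}\) is a character is the second delicate point. I would handle it first in the two-step case and then extend it by induction on the nilpotency class.
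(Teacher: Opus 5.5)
The paper gives no proof of this statement. It quotes Howe's Theorem~1 and uses it as a black box in Step~N-5, so your sketch has to stand on its own as a reconstruction of Howe's argument. Part~(b) does not. Your claim that in $L'=2L$ the BCH factor $\tfrac12$ becomes harmless, so that the argument of (a) applies to $\Gamma'$ in every period, is false. Take $L=\mathbb Z X+\mathbb Z Y+\mathbb Z Z$ with $[X,Y]=2Z$ (this is e.e.), and $\psi'\in\widehat{L'}$ with $\psi'(2X)=\alpha$, $\psi'(2Y)=\beta$, $\psi'(2Z)=\ii$. Then $\psi'$ is trivial on $[L',L']=8\mathbb Z Z$, so $R_{\psi'}=L'$ and the $\operatorname{Ad}^*\Gamma'$-orbit of $\psi'$ is a point. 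But $\exp(2X)\exp(2Y)=\exp(2X+2Y+4Z)$ and $\psi'(4Z)=-1$, so $\psi'\circ\log$ is not a character of $\Gamma'$. What passing to $L'$ actually does is rewrite $\tfrac12[X',Y']=[X'/2,Y']$ with $X'/2\in L$. This converts the $2$-obstruction into the action of $\Gamma$, which is why (b) uses $\operatorname{Ad}^*\Gamma$-orbits and asserts only a \emph{summed} identity. Here $O$ consists of the four functionals with values $(\pm\alpha,\pm\beta,\ii)$ on $(2X,2Y,2Z)$. The fibre over $O$ is the four characters of $\Gamma'$ with $\exp(2X)\mapsto\pm\alpha$, $\exp(2Y)\mapsto\pm\beta$, $\exp(2Z)\mapsto\ii$, so $\ell=4$ and $m=1$. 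None of them is $\varphi\circ\log$ for any $\varphi\in O$: the $(+,+)$ character takes the value $-\alpha\beta$ at $\exp(2X+2Y)$, while $\psi'(2X+2Y)=\alpha\beta$. So the $U_i$ have no individual orbital character formulas to ``sum''. The definition of the map $U\mapsto O$, the transitivity of $\Gamma/\Gamma'$ on its fibres, and $\sum_i\xi_i=m^{-1}\sum_{\varphi\in O}\varphi$ all still need an argument.

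Your exhaustion step in (a) also fails as written.
\begin{itemize}
\item You start from an arbitrary finite-dimensional irreducible $U$. But twisting by characters of $\Gamma/\Gamma_s^{(2)}$ does not change $U|_{\Gamma_s^{(2)}}$, and your own character formula makes $U^{\widetilde\psi}$ trivial on $\exp(nL_s^{(2)})$ with $n$ odd. For example, the characters of the lattice above with $\exp Z\mapsto-1$ never have this form. You must restrict to odd-period $U$.
\item Even then, monomiality of a finite nilpotent quotient only gives $U\simeq\operatorname{Ind}_H^\Gamma\lambda$ for some finite-index $H$ and some character $\lambda$. Nothing forces $H=\exp P$ for a polarizing subalgebra, or $\lambda=\psi\circ\log$; the example above shows such functions need not even be characters. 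So ``matching characters'' is not an argument.
\end{itemize}
The workable route is counting. First twist an odd-period $U$ so that it factors through $\Gamma/n\Gamma$ with $n$ odd: split off the $2$-Sylow factor of its finite nilpotent image, which is a character of $\Gamma/\Gamma_s^{(2)}$. On that quotient, the $U_O$ for the $\operatorname{Ad}^*\Gamma$-orbits $O\subset\widehat{L/nL}$ are pairwise inequivalent by the orthogonality you already use. Since $\sum_O(\dim U_O)^2=\sum_O|O|=|L/nL|=|\Gamma/n\Gamma|$, they exhaust the dual of the quotient.

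Three smaller repairs.
\begin{itemize}
\item Step~2 produces only a Lagrangian \emph{subgroup}, so $\exp P$ need not be a group and higher BCH terms need not be brackets inside $P$. Take $P$ as the given subalgebra, as the statement does, and just note that maximal isotropic implies Lagrangian.
\item In Step~3, coefficients such as $\tfrac1{12}$ are not invertible mod $n$ when $3\mid n$. Use the e.e.\ divisibility of $[P,P]$ to absorb everything except a factor $\tfrac12$, and only then use that $n$ is odd.
\item The character computation rests on the unstated lemma $\Pi\cdot\psi=\psi\cdot P^{\perp}$: the $\Pi$-orbit of $\psi$ fills the annihilator coset of $P$. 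This is what makes your Fourier sum collapse to $|O|^{-1/2}\sum_{\varphi\in O}\varphi$.
\end{itemize}
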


The odd-period hypothesis in Step~N-5 is therefore only a technical
convenience.  It puts us in the simpler case described in part~(a), where a
single induced representation already gives the desired irreducible
finite-dimensional representation and the character formula has no power-of-two
ambiguity.  Part~(b) shows that the general rational case is still covered by
Howe's classification; one only has to keep track of a finite, explicitly
controlled ambiguity.  Thus the odd denominator condition is not a conceptual
restriction, but a way of keeping the finite-dimensional refinement explicit.

\subsubsection{Main decomposition theorem for rational parameters}

We may now summarize the structural part of the paper.  The statement is
split into two assertions in order to avoid the misleading impression that
the first Hilbert direct integral is already a finite-dimensional
Plancherel decomposition.

\begin{theorem}[Restriction and finite-dimensional rational refinement]
\label{sec3decompose}
Let $\Gamma$ be a torsion--free finitely generated nilpotent group,
let $G$ be its Malcev completion, and let $l\in\mathfrak{g}_{\mathbb{Q}}^*$.
Then there exist rational subordinate data and a lattice-adapted weak
Malcev system such that
\[
\pi_l|_{\Gamma}
\simeq
\int_{[0,1)^{n-m}}^\oplus
\mathrm{Ind}_{M_t\cap\Gamma}^{\Gamma}
(\chi_t)\,dt.
\]
In a fixed coordinate model this may be written as
\[
\pi_l|_{\Gamma}
\simeq
\int_{[0,1)^{n-m}}^\oplus
\mathrm{Ind}_{M\cap\Gamma}^{\Gamma}
\bigl(\chi_{l_t}|_{M\cap\Gamma}\bigr)\,dt,
\]
with the understanding that, in the non-ideal polarization case, the
inducing pair is transported through the standard polarization-replacement
intertwiner.

If in addition the corresponding lattice functional lies on the rational
odd locus, then the induced fiber admits the further refinement
\[
\mathrm{Ind}_{M_t\cap\Gamma}^{\Gamma}(\chi_t)
\simeq
\int_{\widehat{P_t/(M_t\cap\Gamma)}}^\oplus
\mathrm{Ind}_{P_t}^{\Gamma}(\chi_t^P\otimes\eta)\,d\eta,
\]
where $P_t$ is a discrete polarization, $\chi_t^P$ is an
extension of $\chi_t$ to $P_t$, and every representation in the final
integral is finite-dimensional and irreducible.
\end{theorem}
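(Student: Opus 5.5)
The proof assembles two results already established: Theorem~\ref{firstdecomposition} for the first assertion, and the computation \eqref{intermidietedecomposition} of Step~N-5 together with Howe's Theorem~\ref{Howefinite}(a) for the refinement. The work is to check that the inducing data produced by the inductive proof have the form assumed in Step~N-5.

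\emph{First assertion.} I will invoke Theorem~\ref{firstdecomposition}, proved by induction on $\dim\mathfrak g$. First quotient by rational central directions on which $l$ vanishes. Then apply Kirillov's Lemma~\ref{Kirillovlemma} over $\mathfrak g_{\mathbb Q}$ (Remark~\ref{Kirillovremark}). Case~1 uses the slicing operator $T$ to write $\pi_l|_\Gamma$ as a direct integral over $s\in[0,1)$ of representations induced from $G_0\cap\Gamma$. Case~2 first replaces $\mathfrak m$ by $\mathfrak m_1\subset\mathfrak g_0$ via the Fourier intertwiner on the Heisenberg quotient $\mathfrak k/\mathfrak k_0$.

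Two bookkeeping points need care. Each new circle parameter $s$ appears as conjugation by $\exp(sX)$, so the accumulated functional is exactly $l_t=\mathrm{Ad}^*(\exp t_{m+1}X_{m+1})\cdots\mathrm{Ad}^*(\exp t_nX_n)l$ in the weak Malcev basis through $\mathfrak m$. Induction in stages from $M\cap\Gamma$ through $G_0\cap\Gamma$ to $\Gamma$ then gives the fiber $\mathrm{Ind}_{M\cap\Gamma}^\Gamma(\chi_{l_t}|_{M\cap\Gamma})$. In the non-ideal case, I define $(M_t,\chi_t)$ as the pair transported through the Case~2 intertwiner; this gives the invariant form of the statement. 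Measurability of $t\mapsto$ fiber holds because $T$ is an explicit restriction map.

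\emph{Second assertion.} Fix $t$ on the rational odd locus. Rationality of $l$ and of $t$ makes $\chi_t$ a rational character of $M_t\cap\Gamma$. I then pass, by Howe's Proposition~0, to an e.e.\ finite-index situation where $\log$ of the lattice is a Lie lattice. There I take $\psi$ extending $\log\chi_t$ and choose a discrete polarization $P_t\supset M_t\cap\Gamma$ with $P_t/R_\psi$ maximal isotropic. I must verify three things: $M_t\cap\Gamma\triangleleft P_t$; $A_t=P_t/(M_t\cap\Gamma)$ is free abelian, which holds because $\psi$ is subordinate on $P_t$ and $M_t$ is a maximal real subordinate algebra; and $[\Gamma:P_t]<\infty$, which holds because $R_\psi$ has finite index by rationality.

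Given these, $\chi_t$ extends to $\chi_t^P$, and $\mathrm{Ind}_{M_t\cap\Gamma}^{P_t}(\mathbf 1)$ is the regular representation of $A_t$. Its Fourier decomposition over $\widehat{A_t}$, followed by induction in stages, yields \eqref{intermidietedecomposition}. Each integrand $\mathrm{Ind}_{P_t}^\Gamma(\chi_t^P\otimes\eta)$ is finite-dimensional since $[\Gamma:P_t]<\infty$. It is irreducible by Theorem~\ref{Howefinite}(a), because the twisted functional $\psi\cdot\eta$ has finite $\mathrm{Ad}^*\Gamma$-orbit with odd period.

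\emph{Main obstacle.} The hardest step is applying Howe's theorem uniformly in $\eta$. Howe works with $\log\Gamma$ in the e.e.\ sense, whereas $P_t$ was built from weak Malcev coordinates, so the two must be reconciled through the finite-index e.e.\ reduction. One must also check that the odd-period condition on $\psi|_{L^{(2)}_s}$ is unaffected by twisting with $\eta$. I expect this to hold because $\eta$ is trivial on $M_t\cap\Gamma$, which contains the saturated commutator part relevant to the period. I would verify this first in the Heisenberg case against Step~H-1, where it reproduces the $q$-dimensional fibers $\rho_{\mathrm{fin}}$.
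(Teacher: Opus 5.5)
Your route is the paper's own: Theorem~\ref{firstdecomposition} (Cases~1 and~2) for the first assertion, then induction in stages, Fourier analysis on $A_t$ and Theorem~\ref{Howefinite}(a) for the refinement. The gap is in the three structural facts about $P_t$ that you set out to verify. The paper only asserts them ``in lattice-adapted Malcev coordinates'', and your justification does not supply them. Normality of $M_t\cap\Gamma$ in $P_t$ is not argued at all. For $A_t$ abelian: because $[\Gamma:P_t]<\infty$, subordination of $\psi$ on $P_t$ is only a mod-$\mathbb Z$ condition on a character, and maximality of the real polarization $\mathfrak m_t$ says nothing about whether $[P_t,P_t]\subset M_t\cap\Gamma$. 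In fact these facts fail in general. If $P_t$ normalizes $M_t\cap\Gamma$, then each $\mathrm{Ad}(p)$, $p\in P_t$, preserves $\mathfrak m_t=\mathbb R\text{-span}\log(M_t\cap\Gamma)$. Since $\exp Y$ normalizes $\mathfrak m_t$ only if $[Y,\mathfrak m_t]\subset\mathfrak m_t$, and $\log P_t$ spans $\mathfrak g$, $\mathfrak m_t$ must be an ideal. If moreover $A_t$ is abelian, then $[P_t,P_t]\subset M_t$, and Zariski density of the lattice $P_t$ gives $[G,G]\subset M_t$, i.e.\ $[\mathfrak g,\mathfrak g]\subset\mathfrak m_t$. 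But a subordinate subalgebra containing $[\mathfrak g,\mathfrak g]$ exists only if $l$ vanishes on $[[\mathfrak g,\mathfrak g],[\mathfrak g,\mathfrak g]]$, and this condition is constant along the coadjoint orbit. In the free $4$-step nilpotent Lie algebra on generators $A,B,C$ one has $[[A,B],[A,C]]\neq0$. So for rational $l$ not vanishing there, no choice of data yields a finite-index $P_t$ normalizing $M_t\cap\Gamma$ with abelian quotient, and the integral over $\widehat{P_t/(M_t\cap\Gamma)}$ is simply not available. One would need a Mackey double-coset analysis of induction from a non-normal subgroup, not a single abelian Fourier transform.

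Conversely, suppose you choose from the outset a rational polarization containing $[\mathfrak g,\mathfrak g]$. This is possible exactly when $l([[\mathfrak g,\mathfrak g],[\mathfrak g,\mathfrak g]])=0$, e.g.\ in step at most three, which covers all the paper's examples. Then your second half goes through, modulo the e.e.\ reduction you flagged. Indeed $M_t\cap\Gamma\triangleleft\Gamma$ with free abelian quotient, and $\log(M_t\cap\Gamma)+R_\psi$ is isotropic and extends to a Howe polarizing subalgebra. Also $M_t\cap\Gamma\supset\Gamma\cap[G,G]=\Gamma^{(2)}_s$, so twisting by $\eta$ changes neither $B_\psi$ nor the period. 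Without that choice, the containment you invoke for the ``main obstacle'' is false: $\mathbb R\text{-span}\{Z,Y_1,X_1,X_2\}$ is a rational polarization for $l=\alpha l_1+\gamma_2 l_5$ ($\alpha\neq0$) in $\mathfrak n_4$ that omits $Y_2\in[\mathfrak g,\mathfrak g]$. The same ideal hypothesis is hidden in your Case~1 bookkeeping. The fiber over $s$ is $\pi_{l_0,\mathfrak m}\circ\mathrm{Ad}(\exp sX)$. Its functional $\mathrm{Ad}^*(\exp(-sX))l_0$ takes the value $s$ at $Y$, and its polarization $\mathrm{Ad}(\exp(-sX))\mathfrak m$ is rational for almost every $s$ only if $[X,\mathfrak m]\subset\mathfrak m$. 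So the inductive hypothesis, stated for rational $l$, does not apply. A hypothesis of Bekka--Driutti type (rational polarization, arbitrary functional) works only when $\mathfrak m$ is normalized by the slicing directions. As written, your argument proves the statement when the rational polarization is an ideal containing $[\mathfrak g,\mathfrak g]$, not in the non-ideal generality the statement claims.
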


\begin{remark}
The first integral in Theorem~\ref{sec3decompose} is an ordinary Hilbert
direct integral over real Bloch parameters.  The final finite-dimensional
statement is an arithmetic refinement on the rational finite-dimensional locus.  It is
this second family, not the first direct integral by itself, that enters the
Pytlik-type finitely additive inversion formula.
\end{remark}

\subsubsection{A Fourier inversion formula for nilpotent lattices (Step N-7)}\label{Step0-6}

The restriction theorem and the finite-dimensional rational refinement explain which
finite-dimensional representations occur from rational Kirillov data.  The
route from these fibers to an inversion formula is indirect.  One cannot
simply apply the ordinary Kirillov--Plancherel formula on the Malcev
completion and then restrict the result to the lattice.  The point is already
visible in the abelian model \(G=\R\), \(\Gamma=\Z\): the character
\(x\mapsto \ee^{2\pi\ii\xi x}\) of \(\R\) restricts to a character of
\(\Z\) depending only on \(\xi\) modulo \(\Z\), and a nontrivial character of
\(G\) may become trivial on \(\Gamma\).  For nilpotent lattices the same
issue is encoded by rational central characters, polarizations, and
lattice-period data.

Thus one first passes through the lattice Plancherel theory: decompose the
regular representation of \(\Gamma\) into factor representations, extend
these factors to monomial representations of \(G\), apply Fujiwara's
Plancherel formula, and only then restrict the resulting Kirillov
representations back to \(\Gamma\) and apply the finite-dimensional
refinement.  The finitely additive Plancherel measure below is constructed
on the finite-dimensional rational fibers obtained at the end of this process.

The following inversion formula is obtained by replacing the ordinary
parameter spaces above with their rational finite-dimensional subsets and by using the
corresponding finitely additive volume convention, in the same spirit as
Pytlik's formula for the discrete Heisenberg group.  This separation is
important: the measure below is finitely additive, hence it is a trace
functional and not the measure of a Hilbert-space direct integral.

\begin{theorem}[Generalized Pytlik theorem]
\label{newPytlik}
Let $\Gamma$ be a finitely generated, torsion--free nilpotent group.  There
exists a positive finitely additive probability measure $\mu$ on the set
$\widehat\Gamma_{\mathrm{fin}}$ of finite-dimensional irreducible unitary
representations of $\Gamma$ such that, for every $f\in\ell^1(\Gamma)$ and
$\sigma\in\Gamma$,
\[
f(\sigma)
=
\int_{\widehat\Gamma_{\mathrm{fin}}}
\frac{1}{\dim\pi}\,
\mathrm{Tr}\bigl(\pi(\sigma^{-1})\pi(f)\bigr)\,d\mu(\pi).
\]
The corresponding normalized Plancherel identity is obtained by applying the
same formula to $f^**f$.
\end{theorem}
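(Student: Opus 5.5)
The plan is to construct \(\mu\) directly from finite quotients of \(\Gamma\), in the spirit of Pytlik's proof for \(\Heis_3(\Z)\). This avoids having to glue the Plancherel measures of Steps N-1--N-3 into a single measure. Fix a lattice-adapted Malcev basis and, for odd \(N\), the congruence subgroups \(\Gamma_N\triangleleft\Gamma\) of finite index. For \(N=3,5,7,\ldots\) these may be taken as \(N\Gamma=\exp(NL)\) in Howe's e.e. normalization, after first passing to a finite-index e.e. subgroup via Howe's Proposition~0. Then:
\begin{itemize}
\item \(\bigcap_N\Gamma_N=\{e\}\), since \(\Gamma\) is residually finite;
\item the finite groups \(\Gamma/\Gamma_N\) form a directed system under divisibility.
\end{itemize}

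For each finite group \(F_N=\Gamma/\Gamma_N\), the Plancherel formula for finite groups gives
\[
\delta_e(\gamma)=\sum_{\pi\in\widehat{F_N}}\frac{(\dim\pi)^2}{|F_N|}\,\frac{1}{\dim\pi}\Tr\pi(\gamma)
\qquad\text{for }\gamma\in\Gamma .
\]
This holds exactly when \(\gamma\notin\Gamma_N\setminus\{e\}\). Define the probability measure \(\mu_N\) on \(\widehatGamma_{\mathrm{fin}}\) by giving mass \((\dim\pi)^2/|F_N|\) to each \(\pi\in\widehat{F_N}\), viewed as a representation of \(\Gamma\). Then, for \(f\in\ell^1(\Gamma)\) and \(\sigma\in\Gamma\),
\[
\int\frac{1}{\dim\pi}\Tr\bigl(\pi(\sigma^{-1})\pi(f)\bigr)\,d\mu_N(\pi)
=\sum_{\gamma\in\sigma\Gamma_N}f(\gamma),
\]
and the right-hand side tends to \(f(\sigma)\) as \(N\to\infty\) by dominated convergence, because \(f\in\ell^1\) and the \(\Gamma_N\) shrink to \(\{e\}\).

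The next step is to pass from the sequence \((\mu_N)\) to a single finitely additive measure. I would take the Boolean algebra \(\mathcal A\) of "cylinder" subsets of \(\widehatGamma_{\mathrm{fin}}\), i.e. sets described by the parameters of Theorem~\ref{sec3decompose} and Theorem~\ref{Howefinite}: the rational central or derived-lattice character together with the abelian twisting parameter \(\eta\). One then shows that \(\mu_N(E)\) stabilizes, or at least converges, along the directed system for every \(E\in\mathcal A\), and sets \(\mu(E)=\lim_N\mu_N(E)\). Positivity, finite additivity and \(\mu(\widehatGamma_{\mathrm{fin}})=1\) pass to the limit. The \(\mu_N\) are compatible in the required sense because a representation factoring through \(F_N\) also factors through \(F_{NN'}\). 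The integral of the bounded function \(\pi\mapsto(\dim\pi)^{-1}\Tr(\pi(\sigma^{-1})\pi(f))\) against \(\mu\) is then defined as a limit of \(\mu_N\)-integrals. Equivalently, one takes a Banach limit in the space of bounded functions and restricts it to \(\widehatGamma_{\mathrm{fin}}\). This produces a positive finitely additive probability measure directly and yields the inversion formula from the previous paragraph.

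The Plancherel identity then follows by applying the inversion formula to \(f^**f\) at \(\sigma=e\), since \(\pi(f^**f)=\pi(f)^*\pi(f)\).

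The main obstacle is making the limit measure intrinsic and compatible with the rational fibers of Theorem~\ref{sec3decompose}. Concretely, one must check that the weights \((\dim\pi)^2/|F_N|\) converge on cylinder sets to the product of the finitely additive "rational Lebesgue" density on the central and derived parameters with Haar measure in \(\eta\), as in~\eqref{measure}. I would try to prove this with Howe's dimension formula \(\dim\pi=|O|^{1/2}\), which counts the odd-period orbits in \(\widehat{L/NL}\): the mass assigned to a set of orbits equals the proportion of \(\widehat{L/NL}\) that those orbits cover, and this proportion converges to the Lebesgue volume of the corresponding region. The even-period and non-e.e. contributions form a finite-index correction, and I expect it to be handled by restricting to odd \(N\) together with the finite-to-one statement in Theorem~\ref{Howefinite}(b).
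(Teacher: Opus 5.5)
Your proof is correct and is essentially the paper's own rigorous construction, the finite-quotient ultralimit of Appendix~\ref{finiteadditiveappendix}: finite-group Plancherel weights $(\dim\pi)^2/|F_N|$ along a residual family of normal finite-index subgroups, followed by an ultrafilter or Banach limit. The one cosmetic difference is that you use the identity $\int(\dim\pi)^{-1}\Tr(\pi(\sigma^{-1})\pi(f))\,d\mu_N=\sum_{\gamma\in\sigma\Gamma_N}f(\gamma)$ together with dominated convergence, where the paper approximates $f\in\ell^1(\Gamma)$ by finitely supported functions. The Banach-limit step alone finishes the argument, so you do not need the cylinder-set convergence or the identification with the rational Lebesgue density, which the paper also explicitly declines to assert; and your claim that the $\mu_N$ are compatible is not right in any projective sense, since $\widehat{F_N}$ carries $\mu_{NN'}$-mass $|F_N|/|F_{NN'}|\to0$.
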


\begin{proof}[Construction of the finitely additive measure]
The construction is the natural nilpotent analogue of Pytlik's measure for
the discrete Heisenberg group.  At each stage of the preceding construction
there is an ambient compact abelian parameter space, or an affine
Kirillov--Bloch parameter torus, which we denote here by \(T\).  The
finite-dimensional representations of nilpotent lattices occur on a dense rational skeleton
\(D\subset T\).

The finitely additive content is defined in a cube-first form.  We do not
assign to an arbitrary subset of \(D\) the Lebesgue measure of its closure.
Instead, we first take a regular set \(B\subset T\), for instance a finite
union of half-open cubes, or more generally a Jordan measurable set with
Lebesgue-null boundary, and then consider the subset \(D\cap B\) of the
rational finite-dimensional skeleton.  We set
\[
   \mu_D(D\cap B)=\operatorname{Leb}_T(B).
\]
This gives a finitely additive content on the Boolean algebra generated by
such regular rational sets.  In the Heisenberg case this is precisely the
geometric Pytlik convention: one keeps only the rational finite-dimensional
fibres, but measures regular families of them by the Lebesgue content of the
corresponding region in the ambient Bloch torus.

Combining this finitely additive parameter content with the normalized
finite-dimensional traces of the finite-dimensional rational fibers gives the Pytlik-type trace
functional.  For group-algebra elements the Fourier inversion identity
follows from the ordinary Fourier inversion formulas on the abelian
parameter spaces and from the finite-dimensional character formula in Howe's
theorem.  The normalized trace factor \(\dim(\pi)^{-1}\) is the same one that
appears in Pytlik's formula for the discrete Heisenberg group.

For finitely supported functions this gives the geometric realization of
the displayed coefficient functional on the regular rational algebra
described above.  The existence of a positive finitely additive probability
measure and the extension to \(\ell^1(\Gamma)\) are supplied in
Appendix~\ref{finiteadditiveappendix} by a finite-quotient ultralimit.  On the
coefficient functions occurring in the inversion formula, the two
realizations give the same inversion values and normalized-trace convention.
We do not assert literal equality of the resulting finitely additive set
functions.  In particular, the finite-quotient realization may depend on the
chosen residual chain and free ultrafilter.
\end{proof}

This formula generalizes Pytlik's Fourier inversion for the discrete
Heisenberg group.

\begin{remark}[Large denominators in the nilpotent case]
The qualitative large-denominator interpretation of
Remark~\ref{remark34} is not special to the central parameter of
\(\Heis_3\).  In the nilpotent case, the relevant polarization characters
form compact abelian parameter spaces, and their rational points with
arbitrarily large denominators are dense.  The induced representations at
those rational points are the exact finite-dimensional lattice fibers used
above.  The present paper does not infer from density alone a quantitative
operator estimate after induction.  Any comparison with the Kirillov model
on the Malcev completion must specify the observables and topology and is
left to the companion analytic work.

The last qualification is only to avoid a possible misreading of the integral
notation.  Since the Pytlik-type functional in Theorem~\ref{newPytlik} is
finitely additive, it is already clear that it is not a countably additive
Borel Plancherel measure producing a Hilbert direct integral over the full
unitary dual.  We use it here as a finite-dimensional Fourier inversion and
normalized-trace identity.  For the Heisenberg lattice this recovers the
large-denominator rational central characters of Pytlik's formula; for a
general nilpotent lattice, the analogous role is played by rational
polarization characters and the induced finite-dimensional rational fibers.
\end{remark}

\section{Analytic and Geometric Structures Associated with Nilpotent Groups}

In this section, we introduce the geometric and analytic structures
that form the analytic counterpart
of the representation--theoretic framework developed earlier.
These constructions do not constitute additional steps
of the reduction procedure,
but rather provide a fixed setting
in which the results of the preceding sections
can be interpreted and applied.

We first recall filtrations and stratifications
of nilpotent Lie algebras,
leading to canonical stratified (graded) approximations
of the Lie group $G$.
This structure will be used throughout the subsequent analysis.
Based on this framework,
we associate to irreducible unitary representations of $G$
certain canonical hypo--elliptic differential operators,
which serve as analytic avatars
of the representation--theoretic objects appearing earlier.
Finally, these constructions allow us to relate
analytic problems on the lattice $\Gamma$
to corresponding questions on the Lie group $G$
in a precise and systematic manner.

\subsection{Filtrations of Lie Algebras and Stratified Lie Groups}
\label{stratification}

In this subsection, we recall the notions of filtrations and stratifications
of nilpotent Lie algebras, following \cite{Alexopoulos3}.
These structures play a central role in the construction
of associated graded Lie algebras
and their corresponding stratified Lie groups,
which serve as tangent cones
in the analysis of hypoelliptic operators.

\medskip

Let $\mathfrak{g}$ be the Lie algebra of a nilpotent Lie group $G$,
which we identify with the left-invariant vector fields on $G$.
We define the descending central series by
\[
\mathfrak{g}_1=\mathfrak{g}, \qquad
\mathfrak{g}_{i+1}=[\mathfrak{g}_1,\mathfrak{g}_i], \quad i\ge 1.
\]
Since $\mathfrak{g}$ is nilpotent, there exists an integer $n$ such that
\[
\mathfrak{g}
=
\mathfrak{g}_1
\supset \mathfrak{g}_2
\supset \cdots
\supset \mathfrak{g}_n
\supset \mathfrak{g}_{n+1}=\{0\},
\qquad
\mathfrak{g}_n\neq\{0\}.
\]

We choose linear subspaces
$\mathfrak{g}^{(1)},\ldots,\mathfrak{g}^{(n)}$
of $\mathfrak{g}$ such that
\begin{equation}
\mathfrak{g}_i
=
\mathfrak{g}^{(i)}\oplus \cdots \oplus \mathfrak{g}^{(n)},
\qquad 1\le i\le n.
\label{liealgdecomposition}
\end{equation}

Set
\begin{align*}
g_i
&:= \dim(\mathfrak{g}/\mathfrak{g}_{i+1})
   = \dim(\mathfrak{g}^{(1)}\oplus \cdots \oplus \mathfrak{g}^{(i)}),
   \qquad 1\le i\le n,\\
\sigma(j)
&:= i \quad \text{for } g_{i-1}<j\le g_i,\\
q &:= g_n = \dim \mathfrak{g}.
\end{align*}

The \emph{homogeneous dimension} of $G$ is then defined by
\[
D = \sigma(1)+\cdots+\sigma(q).
\]

\begin{definition}
The Lie algebra $\mathfrak{g}$ is called \emph{stratified}
if the following conditions are satisfied:
\begin{align}
\mathfrak{g}
&= \mathfrak{g}^{(1)}\oplus \cdots \oplus \mathfrak{g}^{(n)}, 
\label{liestratified1}\\
[\mathfrak{g}^{(i)},\mathfrak{g}^{(j)}]
&\subset \mathfrak{g}^{(i+j)},
\label{liestratified2}\\
\mathfrak{g}^{(1)}
&\text{generates }\mathfrak{g}\text{ as a Lie algebra}.
\label{liestratified3}
\end{align}
\end{definition}

Choose a basis $\{X_1,\ldots,X_q\}$ of $\mathfrak{g}$ such that
$\{X_{g_{i-1}+1},\ldots,X_{g_i}\}$
forms a basis of $\mathfrak{g}^{(i)}$ for $1\le i\le n$.

We define a new Lie bracket $[\cdot,\cdot]_0$ on $\mathfrak{g}$ by
\begin{equation}
[X_i,X_j]_0
=
\operatorname{pr}_{\mathfrak{g}^{\sigma(i)+\sigma(j)}}[X_i,X_j],
\label{zerobracket}
\end{equation}
where $\operatorname{pr}_{\mathfrak{g}^{\sigma(i)+\sigma(j)}}$
denotes the projection onto $\mathfrak{g}^{\sigma(i)+\sigma(j)}$.
We denote by $\mathfrak{g}_0=(\mathfrak{g},[\cdot,\cdot]_0)$
the resulting nilpotent Lie algebra.

Using exponential coordinates of the second kind (Malcev coordinates),
we identify $G$ with $\mathbb{R}^q$ via
\[
\phi:\mathbb{R}^q\to G,
\qquad
(x_q,\ldots,x_1)\mapsto
\exp(x_qX_q)\cdots\exp(x_1X_1).
\]

For $\varepsilon>0$, define dilations on $G$ by
\[
\delta_\varepsilon(x_q,\ldots,x_1)
=
(\varepsilon^{\sigma(q)}x_q,\ldots,\varepsilon^{\sigma(1)}x_1).
\]
Let $\ast_\varepsilon$ be the rescaled group law
\[
x\ast_\varepsilon y
=
\delta_\varepsilon\bigl(
(\delta_{\varepsilon^{-1}}x)(\delta_{\varepsilon^{-1}}y)
\bigr),
\]
and define
\[
x\ast_0 y
=
\lim_{\varepsilon\to 0} x\ast_\varepsilon y.
\]
Then $G_0=(G,\ast_0)$ is a stratified nilpotent Lie group
whose Lie algebra is canonically isomorphic to $\mathfrak{g}_0$.
We refer to $G_0$ as the \emph{nilpotent (or stratified) approximation} of $G$.

We identify $G$ and $G_0$ as smooth manifolds.
In canonical $\ast_0$-coordinates
\[
\phi^\ast:\mathbb{R}^q\to G_0,
\qquad
(x_{q,\ast},\ldots,x_{1,\ast})
\mapsto
\exp(x_{q,\ast}X_q)\ast\cdots\ast\exp(x_{1,\ast}X_1),
\]
the relations between the original coordinates and the stratified ones
are given inductively by
\begin{align}
x_{i,\ast} &= x_i,
&&1\le i\le \sigma(1)+\sigma(2),\notag\\
x_{i,\ast}
&= x_i + \sum_{0<|K|\le k} C^K P_K(x),
&&\sum_{j=1}^k\sigma(j)+1 \le i\le \sum_{j=1}^{k+1}\sigma(j),
\label{stratifiedcanonical}
\end{align}
where $K=(i_1,\ldots,i_\ell)$ is a multi-index,
$|K|=\sigma(i_1)+\cdots+\sigma(i_\ell)$,
and $P_K(x)=x_{i_1}\cdots x_{i_\ell}$.

\medskip

In the sequel, we freely use this stratified structure.
It is implicit in the proof of the restriction theorem
in Step N-4 (Subsection~\ref{Step0-4})
and will be used explicitly in the construction of the canonical
hypo-elliptic operator on $G$
in Subsection~\ref{constracthypoelliptic}.

\subsection[Concrete construction of the hypo-elliptic operator HG]{Concrete construction of the hypo-elliptic operator $\mathcal H^G$}
\label{constracthypoelliptic}

We record a concrete reference procedure for constructing the canonical
hypo-elliptic operator associated with the generic Kirillov representations of
a nilpotent Lie group.  The material in this subsection is standard in the
orbit method for nilpotent Lie groups; its purpose here is only to fix the
notations used later and to make clear how the operator \(\mathcal H^G\) is
obtained from the Lie-algebraic data of \(\Gamma\).

\begin{description}[leftmargin=2.5em]
\item[\rm (1)]
Let \(\Gamma\) be a torsion-free finitely generated nilpotent group and let
\(G\) be its Malcev completion.  The passage from \(\Gamma\) to \(G\) is
natural because, after choosing Malcev coordinates, the multiplication law in
\(\Gamma\) is given by polynomial functions.  Thus \(\Gamma\) is a lattice in
the connected simply connected nilpotent Lie group \(G\).

\item[\rm (2)]
Using the filtration and contraction procedure of
Subsection~\ref{stratification}, we replace \(G\), when needed, by its
stratified nilpotent approximation.  In the remainder of this construction we
assume that the model Lie group is stratified,
\[
  \mathfrak g = \mathfrak g^{(1)}\oplus \mathfrak g^{(2)}\oplus\cdots .
\]
For a general nilpotent group this should be read as the associated graded
model determined by the chosen filtration.

\item[\rm (3)]
Choose a rational strong Malcev basis
\(\{X_1,\ldots,X_n\}\) of \(\mathfrak g\), compatible with the stratification.
In particular, we choose \(X_1,\ldots,X_k\) to span the first layer
\(\mathfrak g^{(1)}\).  Such a rational choice is possible because the
existence of the lattice gives a \(\mathbb Q\)-structure on \(\mathfrak g\)
(see, for example, \cite[Theorem 5.1.8]{CorwinGreenleaf}).

\item[\rm (4)]
Compute the coadjoint action of the basis elements on
\(\mathfrak g^*_{\mathbb Q}\).  Choose a generic element
\(l\in\mathfrak g^*_{\mathbb Q}\), meaning that its stabilizer
\[
  \mathfrak g_l=\{X\in\mathfrak g\mid \operatorname{ad}^*(X)l=0\}
\]
has minimal dimension.  Equivalently, the coadjoint orbit \(\mathcal O_l\)
has maximal dimension.  The set of such generic elements is Zariski open.

\item[\rm (5)]
For such an \(l\), let \(\mathfrak r_l\) be the radical of the alternating
form \((X,Y)\mapsto l([X,Y])\).  A polarization is a maximal subordinate
subalgebra \(\mathfrak m\subset\mathfrak g\), and it must contain
\(\mathfrak r_l\).  In general there may be many polarizations.  The following
standard construction of Vergne provides a canonical choice once a strong
Malcev basis, or equivalently a chain of ideals, has been fixed; see
\cite[Theorem 1.3.6]{CorwinGreenleaf}.

\begin{theorem}[Vergne's construction]
\label{methodVergne}
Let
\[
  (0)\subset \mathfrak g_1\subset\mathfrak g_2\subset\cdots\subset
  \mathfrak g_n=\mathfrak g
\]
be a chain of ideals with \(\dim \mathfrak g_j=j\).  For
\(l\in\mathfrak g^*\), put \(l_j=l|_{\mathfrak g_j}\), and let
\(\mathfrak r_{l_j}\) be the radical of \(l_j([\cdot,\cdot])\) on
\(\mathfrak g_j\).  Then
\[
  \mathfrak m_l=\sum_{j=1}^n \mathfrak r_{l_j}
\]
is a polarizing subalgebra for \(l\).
\end{theorem}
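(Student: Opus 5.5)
We prove Vergne's theorem by induction on $n=\dim\mathfrak g$, exploiting the fact that $\mathfrak g_{n-1}$ is an ideal of codimension one. We first check that $\mathfrak m_l$ is a subalgebra subordinate to $l$, and then that it has the maximal dimension $\tfrac12(\dim\mathfrak g+\dim\mathfrak r_l)$.

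\emph{Subordination.} Write $B_j(X,Y)=l([X,Y])$ on $\mathfrak g_j$, so that $\mathfrak r_{l_j}$ is the radical of $B_j$. We show that the spaces $\mathfrak r_{l_j}$ are mutually $l$-isotropic. Take $i\le j$, $X\in\mathfrak r_{l_i}\subset\mathfrak g_i\subset\mathfrak g_j$ and $Y\in\mathfrak r_{l_j}$. Then $l([X,Y])=0$, because $Y$ is in the radical of $B_j$ and $X\in\mathfrak g_j$. Hence $l([\mathfrak m_l,\mathfrak m_l])=0$ by bilinearity.

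\emph{Subalgebra property.} For $X,Y\in\mathfrak r_{l_j}$, the bracket $[X,Y]$ lies in $\mathfrak g_j$. For any $W\in\mathfrak g_j$ the Jacobi identity gives
\[
l([[X,Y],W])=l([X,[Y,W]])+l([[X,W],Y]).
\]
Both terms vanish because $[Y,W],[X,W]\in\mathfrak g_j$ and $X,Y$ lie in the radical of $B_j$. So each $\mathfrak r_{l_j}$ is a subalgebra.

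For mixed brackets with $i<j$, $X\in\mathfrak r_{l_i}$ and $Y\in\mathfrak r_{l_j}$, we have $[X,Y]\in\mathfrak g_i$ since $\mathfrak g_i$ is an ideal. For $W\in\mathfrak g_i$,
\[
l([[X,Y],W])=l([X,[Y,W]])-l([Y,[X,W]]).
\]
The first term vanishes since $[Y,W]\in\mathfrak g_i$ and $X\in\mathfrak r_{l_i}$. The second vanishes since $[X,W]\in\mathfrak g_j$ and $Y\in\mathfrak r_{l_j}$. Thus $[X,Y]\in\mathfrak r_{l_i}\subset\mathfrak m_l$, and $\mathfrak m_l$ is a subalgebra.

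\emph{Dimension count (the main obstacle).} The standard fact we need is that a subordinate subalgebra is maximal if and only if it is maximal isotropic for $B_l$, i.e.\ $\dim\mathfrak m=\tfrac12(\dim\mathfrak g+\dim\mathfrak r_l)$. We argue inductively. By the induction hypothesis applied to $\mathfrak g_{n-1}$ and $l_{n-1}$, the space $\mathfrak m'=\sum_{j<n}\mathfrak r_{l_j}$ is a polarization for $l_{n-1}$, and $\mathfrak m_l=\mathfrak m'+\mathfrak r_l$.

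Since $\mathfrak g_{n-1}$ has codimension one, it is the $B_l$-orthogonal complement of a single vector. A linear-algebra comparison of the radicals $\mathfrak r_{l_{n-1}}$ and $\mathfrak r_l$ then gives two cases:
\begin{itemize}
\item \emph{Case (a):} $\mathfrak r_l\subset\mathfrak g_{n-1}$. Then $\dim\mathfrak r_{l_{n-1}}=\dim\mathfrak r_l+1$. In this case $\mathfrak r_{l_{n-1}}=\mathfrak r_l\oplus\mathbb R Y$, where $Y$ pairs nontrivially with a complement $X\notin\mathfrak g_{n-1}$, and $\mathfrak m_l=\mathfrak m'$.
\item \emph{Case (b):} $\mathfrak r_l\not\subset\mathfrak g_{n-1}$. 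Then $\dim\mathfrak r_{l_{n-1}}=\dim\mathfrak r_l-1$, and $\mathfrak m_l=\mathfrak m'\oplus\mathbb R X$ for some $X\in\mathfrak r_l\setminus\mathfrak g_{n-1}$.
\end{itemize}
In both cases, inserting $\dim\mathfrak m'=\tfrac12\bigl((n-1)+\dim\mathfrak r_{l_{n-1}}\bigr)$ yields $\dim\mathfrak m_l=\tfrac12(n+\dim\mathfrak r_l)$.

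The delicate point is verifying the radical-dimension dichotomy $\dim\mathfrak r_{l_{n-1}}=\dim\mathfrak r_l\pm1$. This follows from the standard parity and rank argument for restricting an alternating form to a hyperplane: the rank drops by $0$ or $2$, and the parity of $\dim\mathfrak g_j-\dim\mathfrak r_{l_j}$ is even. Combined with the isotropy and subalgebra checks above, this shows that $\mathfrak m_l$ is a polarization for $l$.
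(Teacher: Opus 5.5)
Your argument is correct, and it is the standard proof found in Corwin--Greenleaf, which the paper cites for this theorem instead of proving it: isotropy of the $\mathfrak r_{l_j}$, closure under brackets via Jacobi and the ideal property, then induction through the codimension-one ideal $\mathfrak g_{n-1}$ with a two-case comparison of $\mathfrak r_{l_{n-1}}$ and $\mathfrak r_l$. One small slip: $\mathfrak g_{n-1}$ is the $B_l$-orthogonal of a single vector only in case (a), and the rank argument alone gives only $\pm1$; the sign is fixed by $\mathfrak r_l\subset\mathfrak r_{l_{n-1}}$ in case (a), while in case (b) the equality $\mathfrak g=\mathfrak g_{n-1}+\mathfrak r_l$ forces $\mathfrak r_{l_{n-1}}=\mathfrak r_l\cap\mathfrak g_{n-1}$.
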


For the strong Malcev basis above, the chain may be taken as
\[
\mathfrak g_j=\mathbb R\operatorname{-span}\{X_1,\ldots,X_j\}.
\]

\item[\rm (6)]
To describe the generic orbits, one may use the standard rational cross-section
in the sense of Corwin--Greenleaf.  With respect to the chosen Jordan--H\"older
basis, let \(d_j(l)\) denote the dimension of the orbit of the restriction of
\(l\) to \(\mathfrak g_j\), and let \(d(l)=(d_1(l),\ldots,d_n(l))\).  On a
Zariski open layer this multi-index is constant.  The jump indices determine
coordinates along the orbit and the non-jump indices give a transversal
\(V_T\) parametrizing the generic orbits.  This is the cross-section appearing
in the usual Kirillov--Fujiwara Plancherel formula; see also
Appendix Theorem~\ref{corwin316} for the non-jump index convention.

\item[\rm (7)]
Choose \(l\) in a generic orbit and a polarization \(\mathfrak m=\mathfrak
m_l\), for instance the Vergne polarization above.  With \(M=\exp\mathfrak m\),
define the character
\[
  \chi_l(\exp X)=e^{2\pi i l(X)},\qquad X\in\mathfrak m,
\]
and realize the associated irreducible unitary representation by
\[
  \pi_l=\operatorname{Ind}_{M}^{G}(\chi_l).
\]

\item[\rm (8)]
Let \(\omega_1,\ldots,\omega_k\) be the harmonic one-forms or Abel--Jacobi data
attached to the first-layer generators \(X_1,\ldots,X_k\).  The model
hypo-elliptic operator associated with the generic representation \(\pi_l\) is
then
\begin{equation}
\mathcal H^G
=
-\sum_{j=1}^{k}
\|\omega_j\|_{L^2(M)}^2\,d\pi_l(X_j)^2 .
\label{detailedgeneralhypo1}
\end{equation}
Equivalently, since each \(d\pi_l(X_j)\) is skew-adjoint on smooth vectors, this
is the positive sum of squares determined by the first layer.  If one uses the
opposite sign convention for the Laplacian, the displayed sign is absorbed into
that convention.  This is the operator denoted \(\mathcal H^G\) in the analytic
applications.
\end{description}

\begin{remark}
The construction above is not meant as a new contribution to the orbit method.
It is included only as a compact recipe for readers who want to pass from a
nilpotent lattice \(\Gamma\) to the concrete polynomial Schr\"odinger operator
which appears after choosing a generic Kirillov model.  In explicit examples,
such as the Heisenberg and Engel groups, formula~\eqref{detailedgeneralhypo1}
reduces to the familiar harmonic or higher polynomial oscillator.
\end{remark}

\section
[The Harper operator in the foundational theory]
{The Harper Operator in the Foundational Theory:
Exact Fibers and No Semiclassical Duplication}
\label{Harperfoundation}\label{sec:harper}

The Harper operator is included here as a model showing where the
finite-dimensional representations constructed above appear concretely.  The
detailed semiclassical expansion, the motion of rational bands, and the
comparison with Landau levels are part of the analytic companion.  To avoid
duplication, we record only the exact representation-theoretic input needed
from the foundations.

Let
\[
H_\theta:\ell^2(\mathbb Z^2)\longrightarrow \ell^2(\mathbb Z^2)
\]
be the magnetic Harper operator
\[
(H_\theta u)(m,n)
=
u(m+1,n)+u(m-1,n)
+e^{im\theta}u(m,n+1)+e^{-im\theta}u(m,n-1),
\]
where the four summands are the basic magnetic translations.  Equivalently,
after choosing generators $u,v$ of the discrete Heisenberg group
$\Gamma=\mathrm{Heis}_3(\mathbb Z)$ with central commutator $w=[u,v]$, the
operator is the image of
\[
A_{\mathrm{Har}}=u+u^{-1}+v+v^{-1}
\]
in a representation with central character $w\mapsto e^{i\theta}$.

If $\theta=2\pi p/q$ with $(p,q)=1$, the relevant Heisenberg
representations are $q$--dimensional.  The exact finite-dimensional Bloch
fibers are
\[
H_x
=
\rho_{{\rm fin},x}(u)+\rho_{{\rm fin},x}(u)^*
+\rho_{{\rm fin},x}(v)+\rho_{{\rm fin},x}(v)^*,
\qquad x=(p/q,x_2,x_3),
\]
and the rational magnetic band decomposition is
\[
\sigma(H_\theta)
=
\bigcup_{x_2,x_3\in[0,1]}
\sigma(H_x).
\]
This is the usual rational Harper decomposition, but in the present paper it
is interpreted as the Heisenberg instance of the finite-dimensional rational
refinement obtained from the restriction formula and the finite-dimensional
classification.  No limiting or perturbative argument is involved in this
identity.  Only these rational finite-dimensional fibers enter the exact
generalized Pytlik inversion on the lattice side.  Irrational
Schr\"odinger representations are not part of its support; below they are
used only as smooth ambient models for organizing coefficient calculations.

The same central parameter also has a Schr\"odinger realization on
$L^2(\mathbb R)$.  The classical Wilkinson expansion was first obtained by
Wilkinson~\cite{Wilkinson} by WKB arguments, and related derivations were
given by Rammal and Bellissard~\cite{RammalBellissard}.  In the usual semiclassical normalization, the Harper operator is
unitarily equivalent near the bottom of the spectrum to
\begin{equation}
   h_\theta
   =
   -2\cos\left(\sqrt{\theta}\,\frac{1}{\mathrm i}\frac{d}{ds}\right)
   -2\cos(\sqrt{\theta}\,s),
\label{foundation-harper-schrodinger}
\end{equation}
up to harmless conventions for the flux and the coordinate $s$.  This
expression is closely related to the Schr\"odinger representation of
$\mathrm{Heis}_3(\mathbb R)$.  The formal Taylor expansion gives
\begin{equation}
   h_\theta
   =
   -4+\theta\left(-\frac{d^2}{ds^2}+s^2\right)+O(\theta^2),
\label{foundation-harper-formal}
\end{equation}
so the coefficient of $\theta$ is the harmonic oscillator, whose eigenvalues
are $2n+1$.

Formula~\eqref{foundation-harper-formal} still requires justification: a
bounded operator is being compared with an unbounded harmonic oscillator.
Helffer and Sj\"ostrand~\cite{HelfferSjostrandI,HelfferSjostrandII}
gave such a justification by semiclassical localization.  Very roughly, if the common domain $L^2(\mathbb R)$ of
$h_\theta$ and the harmonic oscillator were exhausted by a sequence of common
invariant finite-dimensional spaces $V_k$, then one could regard
\eqref{foundation-harper-formal} as the limit of the Taylor expansions of
$h_\theta|_{V_k}$.  No such sequence exists.  Instead, Helffer and
Sj\"ostrand use the spaces spanned by low-energy harmonic-oscillator
eigenfunctions.  These spaces are not invariant under $h_\theta$, but the
resulting errors are $O(h^\infty)$, with $h=\theta/(2\pi)$, and therefore do
not affect the power-series expansion.

The comparison with the Helffer--Sj\"ostrand argument should be understood
in the following sense.  Their semiclassical analysis justifies the
Wilkinson expansion after the Harper operator has been regarded as a single
semiclassical operator whose localized model is the harmonic oscillator;
the differential and multiplication variables are therefore treated
together in that model.

The representation-theoretic finite-dimensionalization used here is
organized differently.  The exact Heisenberg restriction formula identifies
the rational clock and shift matrices as genuine Bloch fibers before any
limiting argument is applied.  Thus the two basic unitaries corresponding to
translation and modulation can be studied separately before they are
combined into the Harper operator.  At each rational flux $p/q$ the resulting
Rayleigh--Schr\"odinger coefficient calculation is an ordinary finite-matrix
perturbation calculation.  This is the coefficient-calculation procedure
familiar from quantum mechanics textbooks such as Sakurai--Napolitano
\cite{Sakurai} and used in the Harper analysis of Rammal--Bellissard
\cite{RammalBellissard}.

In this limited sense, the exact rational fibers provide a mathematical and
representation-theoretic setting for the Rayleigh--Schr\"odinger coefficient
calculation in the Harper problem.  This statement does not replace the
Helffer--Sj\"ostrand localization estimates needed to identify those
coefficients with the actual band-edge asymptotics uniformly in the
large-denominator limit.  Rather, the two viewpoints justify different parts
of the argument: the present construction gives the exact generator-wise
finite-dimensional framework at rational flux, while the semiclassical
analysis controls the final uniform spectral asymptotics.

\appendix

\section*{Appendices}
\addcontentsline{toc}{section}{Appendices}
The appendices collect standard background material used in the main text.  Their purpose is expository: they recall induced representations and basic orbit-method facts for nilpotent Lie groups, mainly as a reference for the notation and examples used above.  No originality is claimed for this general representation-theoretic background, which may be found in more detail in \cite{CorwinGreenleaf}.  The brief finite-quotient discussion at the end is included only to record the normalization of the finitely additive trace functional already used in Step N-7; it is not another finite-dimensional decomposition theorem.

\section{Induced representations}\label{Inducedrep}

We briefly recall the definition of induced representations.
Although this notion is fundamental in representation theory,
it is often regarded as conceptually subtle
when first encountered.
Induced representations play a central role
in the orbit method in the sense of Kirillov
\cite{Kirillov1962,Kirillov2004}.
A personal account of Kirillov's perspective on this subject
can be found in his contribution
to the memorial volume for George Mackey \cite{Kirillov3}.

At an informal level,
an induced representation of a group $G$
from a subgroup $H$
may be viewed as a representation
constructed from a given representation of $H$,
which extends it to the ambient group
in a maximal and systematic manner.
This heuristic description,
while not a definition,
often provides a useful first intuition.

We now recall the precise definition.
Let $G$ be a unimodular locally compact group
and $H \subset G$ a closed subgroup.
Given a unitary representation $(\pi, W)$ of $H$,
the induced representation
$\rho = \mbox{Ind}_H^G \pi$
is defined on a suitable space
of $W$--valued functions on $G$,
as described below.

When $H$ is the trivial subgroup $\{e\}$,
the representation of $G$ induced from the trivial representation
reduces to the (right) regular representation $R$ of $G$,
defined by
\[
(R(g)f)(x) := f(xg),
\qquad g,x \in G,\ f \in L^2(G).
\]

More generally,
the representation space $V$ of $\rho = \mbox{Ind}_H^G \pi$
consists of measurable functions
$\phi \colon G \to W$
satisfying the equivariance condition
\[
\phi(h^{-1}g) = \pi(h)\phi(g),
\qquad h \in H,\ g \in G,
\]
and such that
\[
\int_{H\backslash G} \|\phi(g)\|_W^2 \, d\mu(g) < \infty,
\]
where $d\mu$ denotes a $G$--invariant measure on the homogeneous space
$H\backslash G$.
The action of $\rho$ is given by
\[
(\rho(g)\phi)(x) = \phi(xg),
\qquad g,x \in G.
\]

From a geometric point of view,
let $\Pi \colon G \to H\backslash G$
be the principal $H$--bundle.
The space $V$ can be identified
with the space $\Gamma(E)$ of sections
of the associated vector bundle
\begin{equation}
E := W {}_\pi\times G
\;=\;
(W \times G)/\!\sim,
\qquad
(w,x) \sim (\pi(h)w, h^{-1}x),
\quad h \in H,
\label{inducedrepresentationbundle}
\end{equation}
and the representation $\rho(g)$ acts on sections by
\[
(\rho(g)\psi)(\Pi(x)) = \psi(\Pi(x)g),
\qquad g \in G.
\]

This construction is analogous to the flat vector bundle
$E_R$ over $M$
associated with the regular representation $R$,
which has already appeared in our discussion of periodic structures.

\section{Unitary representation of nilpotent Lie groups}

\subsection{Construction of irreducible unitary representations}\label{B.1}
This section is also provided for the convenience of readers unfamiliar with the terminology of representation theories and can be viewed as an ideal model for the orbit method.

Let $G$ be a nilpotent Lie group. It is well known that irreducible unitary representations are described by the orbit method,  \`a la Kirillov \cite{Kirillov1962}, \cite{Kirillov2004}, \cite{CorwinGreenleaf}.  
All the materials in this section are quoted from \cite{CorwinGreenleaf} essentially with some minor modifications.

Let $\mathfrak{g}^{\ast}$ be the vector space dual of the Lie algebra $\mathfrak{g}$ of $G$. $G$ acts on $\mathfrak{g}^\ast$ by the coadjoint action $\mbox{Ad}^\ast(G)$. For $l \in \mathfrak{g}^\ast$ and $x \in G$, we sometimes write $l\cdot x$ for $\mbox{Ad}^\ast(x^{-1})l$; note that $l\cdot (xy)=(l\cdot x)\cdot y$.
Given $l \in \mathfrak{g}^\ast$, let $B_l$ be the bilinear form $B_l(X,Y)= l([X, Y])$ and let $\mathfrak{r}_l$ be its radical defined by 
\[\mathfrak{r}_l = \{X \in \mathfrak{g} | B_l(X,Y) = 0,\; \mbox{for all} \; Y \in \mathfrak{g}\}.
\]

Choose a maximal subordinate subalgebra (a.k.a. polarized algebra or polarization) $\mathfrak{m}$ for $l$, which is maximal in the set of subalgebras $\mathfrak{h}$ satisfying $B_l(X,Y)= 0$ for all $X,Y \in \mathfrak{h}$,  and let $M =\exp \mathfrak{m}$. Then the map of $M \to S^1 \cong U(1)$ defined by 
\[ \chi_{l,M}(\exp Y) = e^{2\pi\sqrt{-1}l(Y)}, \quad Y \in \mathfrak{m},\]
is a one-dimensional representation (i.e. unitary character) of $M$, since $B_l(\mathfrak{m}, \mathfrak{m})= 0$. 
We may therefore form the induced representation $\pi_{l,M}  = \mbox{Ind}_M^G\chi_{l,M}$.
The following results describe the unitary dual $\widehat{G}$, the set of (unitary) equivalence classes of irreducible unitary representations of $G$, in terms of these induced representations. 

\begin{theorem}\label{nilpLieirredrep} Let $l \in \mathfrak{g}^\ast$. Then we have:
\begin{description}
\item[{\rm (1)}]{\rm (2.2.1 Theorem in \cite{CorwinGreenleaf})} There exists a maximal subordinate subalgebra $\mathfrak{m}$ for $l$ such that $\pi_{l,M}$ is irreducible. 
\item[{\rm (2)}]{\rm (2.2.2 Theorem in \cite{CorwinGreenleaf})} Let $\mathfrak{m}, \mathfrak{m}'$  be two maximal subordinate subalgebras for $l$. Then $\pi_{l,M} \cong \pi_{l,M'}$. $($unitarily equivalent$)$. Thus, $\pi_{l,M}$ will be denoted by $\pi_l$ for simplicity when only the equivalence class is relevant.
\item[{\rm (3)}]{\rm (2.2.3 Theorem in \cite{CorwinGreenleaf})} Let $\pi$ be any irreducible unitary representation of $G$. Then there is an $l \in \mathfrak{g}^\ast$ such that $\pi_l \cong \pi$.
\item[{\rm (4)}]{\rm (2.2.4 Theorem in \cite{CorwinGreenleaf})} Let $l, l' \in \mathfrak{g}^\ast$ . Then $\pi_l \cong \pi_{l'}$ (unitarily equivalent) if and only if $l$ and $l'$ are in the same $\mbox{Ad}^\ast(G)$-orbit in $\mathfrak{g}^\ast$.
\end{description}
\end{theorem}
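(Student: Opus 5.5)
The plan is the classical Kirillov induction on $\dim\mathfrak g$, following Corwin--Greenleaf. All four assertions are proved simultaneously, using Lemma~\ref{Kirillovlemma} as the engine of the induction.

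\textbf{Reduction to a one-dimensional center.} If $\dim\mathfrak g=1$, or more generally if $\mathfrak g$ is abelian, everything is trivial. Polarizations are then all of $\mathfrak g$, $\pi_l$ is the character $\chi_l$, and characters correspond bijectively to points of $\mathfrak g^*$. In general, suppose $\ker l$ contains a nonzero central ideal $\mathfrak a$; this always happens when $\dim\mathfrak z(\mathfrak g)\ge 2$. Then every polarization for $l$ contains $\mathfrak a$, $\exp\mathfrak a$ acts trivially, and $\pi_{l,M}$ is the pull-back of the corresponding representation of $G/\exp\mathfrak a$. So (1), (2) and (4) follow from the induction hypothesis for the quotient. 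For (3), the same reduction applies to an irreducible $\pi$ whose kernel contains a connected central subgroup. Otherwise, by Schur's lemma $\pi$ has a faithful central character, which forces $\dim\mathfrak z=1$. We may therefore assume $\mathfrak z(\mathfrak g)=\mathbb RZ$ and $l(Z)\ne0$. We then fix $X,Y,\mathfrak g_0$ as in Lemma~\ref{Kirillovlemma}, and put $G_0=\exp\mathfrak g_0$ and $l_0=l|_{\mathfrak g_0}$.

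\textbf{Parts (1) and (2).} Take a polarization $\mathfrak m\subset\mathfrak g_0$ for $l_0$, for instance the Vergne polarization of Theorem~\ref{methodVergne} for a chain through $\mathfrak g_0$. By dimension count, it is also a polarization for $l$. Induction in stages gives $\pi_{l,M}=\operatorname{Ind}_{G_0}^G\pi_{l_0,M}$, and $\pi_{l_0,M}$ is irreducible by induction. Conjugation by $\exp(tX)$ moves $l_0$ to a functional with value $l(Y)+t\,l(Z)$ on $Y$, so the conjugates of $\pi_{l_0}$ are pairwise inequivalent by (4) for $G_0$. Mackey's irreducibility criterion then makes the induced representation irreducible, which gives (1). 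For (2), a polarization not contained in $\mathfrak g_0$ is replaced by $\mathfrak m_1=\mathfrak m_0\oplus\mathbb RY\subset\mathfrak g_0$ exactly as in the proof of Case~2 of Theorem~\ref{firstdecomposition}. There the Heisenberg quotient $\mathfrak k/\mathfrak k_0$ and the Euclidean Fourier transform intertwine the two induced representations. Two polarizations both lying in $\mathfrak g_0$ give equivalent representations of $G_0$ by the induction hypothesis, and induction in stages transfers this equivalence to $G$.

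\textbf{Part (3), the main obstacle.} Here one must produce an induced realization of an arbitrary irreducible $\pi$ with $\pi(\exp zZ)=e^{2\pi i\lambda z}$, $\lambda\ne0$. I would use the one-parameter group $\pi(\exp yY)$, which is central in $G_0$. Its spectral measure on $\mathbb R$ is transformed by $\pi(\exp tX)$ via translation by $\lambda t$, because $[X,Y]=Z$. This gives a transitive system of imprimitivity based on $G/G_0\cong\mathbb R$. Mackey's imprimitivity theorem then yields $\pi\cong\operatorname{Ind}_{G_0}^G\sigma$ with $\sigma$ irreducible on $G_0$. By induction $\sigma\cong\pi_{l_0}$ for some $l_0\in\mathfrak g_0^*$. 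Extending $l_0$ to any $l\in\mathfrak g^*$ and choosing a polarization inside $\mathfrak g_0$, as in (1), gives $\pi\cong\pi_l$. Verifying the imprimitivity hypotheses and the measurability details carefully is the step I expect to require the most care.

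\textbf{Part (4).} The implication "same orbit $\Rightarrow$ equivalent" is immediate: $\pi_{l\cdot x}$ is realized from the conjugate polarization, and conjugating the induced representation by $x$ gives an equivalence. Conversely, suppose $\pi_l\cong\pi_{l'}$. Then the central characters agree, so $l(Z)=l'(Z)$. Restricting to $G_0$, we have
\[
\pi_l|_{G_0}\simeq\int^\oplus_{\mathbb R}\pi_{l_0\cdot\exp tX}\,dt,
\]
and similarly for $l'$. Uniqueness of the decomposition over the $Y$-spectrum shows that some conjugate of $l_0'$ is $G_0$-conjugate to $l_0$. Hence, after an $\operatorname{Ad}^*(G)$ move, $l$ and $l'$ agree on $\mathfrak g_0$. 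Finally, the ray $\operatorname{Ad}^*(\exp(\mathbb RY))$ shifts the value on $X$ by arbitrary amounts while fixing $\mathfrak g_0$, so $l'$ lies in the orbit of $l$.
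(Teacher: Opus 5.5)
Your proposal is correct and is essentially the Corwin--Greenleaf induction on $\dim\mathfrak{g}$ that the paper quotes for this theorem. It follows the same sequence: reduction to a one-dimensional center, Kirillov's lemma, Mackey's irreducibility and imprimitivity arguments for $G_0\subset G$, and the polarization-replacement step that the paper reproduces in Case~2 of Theorem~\ref{firstdecomposition}. One point should be stated explicitly: your ``dimension count'' steps take ``polarization'' to mean a subordinate subalgebra of the maximal dimension $\frac12(\dim\mathfrak{g}+\dim\mathfrak{r}_l)$, as in Corwin--Greenleaf, and only under that reading does (2) hold. With the inclusion-maximal wording used just before the statement, $\mathbb{R}W\oplus\mathbb{R}Z$ in the Engel algebra $\mathfrak{e}_4$ is a maximal subordinate subalgebra for $l=Z^*$ of dimension $2<3$, and it induces a reducible representation.
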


We may summarize these results as follows: the map $l \mapsto \pi_{l,M}$ is independent of $\mathfrak{m}$ and gives a bijection between the orbit space $\mathfrak{g}^\ast /\mbox{Ad}^\ast(G)$ and the unitary dual $\widehat{G}$.

\subsection{Coadjoint orbits}

Concerning the Fourier inversion formula, Theorem \ref{FourierinversionLie}, we need only the information of the generic orbits. The following are the first instances describing them. 

\begin{theorem}[{\rm 3.1.6 Theorem in \cite{CorwinGreenleaf}}]\label{corwin316} Let $G$ act unipotently on $V$ and let $\{e_1,\ldots,e_m\}$ be a Jordan-H\"older basis for this action, namely, $V_j := \mathbb{R}$\mbox{-span}$\{e_{j+1},\ldots,e_m\}$ is $G$-stable for all $j$. Then there are disjoint sets of indices $S$, $T$ with $S \cup T=\{1,2,\ldots,m\}$, a Zariski open set $U \subset V$, and rational functions $Q_1(x, t),\ldots, Q_m(x, t)$ of the variables $(x, t) = (x_1 ,\ldots, x_m, t_1 ,\ldots, t_k)$ where $k = \mbox{card}(S)$, with the following properties: if $S = \{j_1 < \cdots <j_k\}$ and if we identify $x \in \mathbb{R}^m$ with $v = \sum_{i= 1}^m x_ie_i$, then 

\begin{description}
\item[{\rm (i)}]	The functions $Q_i(x, t)$ are rational nonsingular on $U \times \mathbb{R}^k$. For fixed $x$, they are polynomials in $t$.
\item[{\rm (ii)}] For each $v = \sum_{i= 1}^m x_ie_i$ in $U$, $Q(x, t) = \sum_{i= 1}^m Q_i(x, t)e_i$ maps $\mathbb{R}^k$ diffeomorphically onto the orbit $G \cdot v$.
\item[{\rm (iii)}] For fixed $x$, the function $Q_j(x, t)$ depends only on those $t_i$ such that $j_i \leq j$.
\item[{\rm (iv)}] If $j \not\in S$ then $Q_j(x, t) =x_j+ R(x_1,\ldots, x_{j-1}, t_1,\ldots, t_i)$ where $i$ is the largest index such that $j_i <j$ and $R$ is rational. Moreover, $Q_1(x, t) = x_1$.
\item[{\rm (v)}] $Q_{j_i}(x,t)= t_i+x_{j_i}+R(x_1 ,\ldots,,x_{j_i-1}, t_1,\ldots,t_{i-1} )$ where $R$ is rational.
\end{description}
\end{theorem}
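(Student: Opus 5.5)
The statement to be proved is Theorem~\ref{corwin316}, the Corwin--Greenleaf cross-section theorem for unipotent actions. The plan is induction on $m=\dim V$ along the Jordan--H\"older flag $V=V_0\supset V_1\supset\cdots\supset V_m=0$. I would pass to the quotient $\bar V=V/V_{m-1}$, where $V_{m-1}=\mathbb{R}e_m$ is $G$-stable, so that $\bar V$ carries the induced unipotent action with the basis $\bar e_1,\ldots,\bar e_{m-1}$. The induction hypothesis then supplies, for $\bar V$:
\begin{itemize}
\item index sets $\bar S$ and $\bar T$;
\item a Zariski open set $\bar U$;
\item rational maps $\bar Q_1,\ldots,\bar Q_{m-1}$ with properties (i)--(v).
\end{itemize}
Since $e_m$ spans a one-dimensional $G$-stable subspace and $G$ acts unipotently, $g\cdot e_m=e_m$. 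Hence $g\cdot(\bar v+x_me_m)$ has last coordinate $x_m+\phi(g,\bar v)$ for a polynomial cocycle $\phi$. This is the standard ``one new coordinate'' step, and it guarantees (iii)--(v) automatically once the last coordinate is handled.

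For the inductive step I would split into two cases according to the behaviour on the stabilizer. Fix $\bar v\in\bar U$ and let $G_{\bar v}$ be its stabilizer. The map $g\mapsto \phi(g,\bar v)$ restricted to $G_{\bar v}$ is a homomorphism into $\mathbb{R}$, given by the linear functional $X\mapsto$ ($e_m$-component of $X\cdot v$) on $\mathfrak g_{\bar v}$.

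\emph{Case A.} For generic $\bar v$ this functional is nonzero. Then the fibre of $G\cdot v\to G\cdot\bar v$ is the whole line $\bar v+\mathbb{R}e_m$. I would put $m\in S$, add a new parameter $t_k$, and set $Q_m=t_k+x_m+R$, where $R$ is the rational correction coming from a rational section of $G\to G\cdot\bar v$ built from $\bar Q$. The new Zariski open set is $U=\pi^{-1}(\bar U)\cap\{\text{functional}\neq0\}$.

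\emph{Case B.} The functional vanishes identically on a Zariski open set. Then the fibre is a single point. I would put $m\in T$ and set $Q_m(x,t)=x_m+R(x_1,\ldots,x_{m-1},t)$, where $R$ is obtained by transporting $v$ along a rational section realizing $\bar Q(\bar x,t)$.

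Both cases are Zariski-open conditions, and we take whichever holds generically, since the vanishing locus is algebraic. Shrinking $\bar U$ keeps everything rational and nonsingular.

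The main obstacle is producing a \emph{rational} (indeed polynomial in $t$) section $t\mapsto g(\bar x,t)\in G$ with $g(\bar x,t)\cdot\bar v=\bar Q(\bar x,t)$. Without it, $R$ would only be smooth, not rational, and (ii) would fail to be a diffeomorphism. To build it I would use that the orbit map in Malcev coordinates of the second kind is polynomial and unipotent-triangular. At each jump index $j_i$ one solves a linear equation in one group coordinate, whose leading coefficient is a nonvanishing rational function on $U$; this gives the triangular structure (v). Finally, the diffeomorphism claim in (ii) follows from the triangular form: the map $t\mapsto Q(x,t)$ is injective because the coordinates $Q_{j_i}$ recover $t_i$ successively. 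Its image is the orbit because orbits of unipotent actions are closed, so the fibrewise description of Cases A and B exhausts $G\cdot v$.
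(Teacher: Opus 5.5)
\paragraph{Review.} The paper gives no proof of this statement; it is quoted from Corwin--Greenleaf as background, so your proposal has to stand on its own.

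\paragraph{What is sound.} Your skeleton is fine:
\begin{itemize}
\item $G$ fixes $e_m$, and $c_{\bar v}\colon G_{\bar v}\to\mathbb R$, $g\mapsto e_m^*(g\cdot v)-x_m$, is a homomorphism.
\item The orbit is a cylinder over $G\cdot\bar v$ when $c_{\bar v}\neq0$ and a graph over it when $c_{\bar v}=0$.
\item Which case is generic is decided by comparing the generic ranks of $X\mapsto X\cdot v$ and $X\mapsto X\cdot\bar v$.
\end{itemize}
You should say why $c_{\bar v}$ is determined by its differential. For a unipotent action, $\exp(Y)\cdot\bar v=\bar v$ if and only if $Y\cdot\bar v=0$, so the relevant stabilizer is connected. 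In the cylinder case no correction is needed at all: $Q_m=t_k+x_m$ already works.

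\paragraph{The gap.} The gap is in the graph case, which carries all of the rationality content of (i) and (iv). You correctly name the rational section as the main obstacle, but the mechanism you offer does not supply it.

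First, your induction hypothesis provides only $\bar Q$, not a section. So there is nothing from which to build $g(\bar x,t)$ realizing $\bar Q$.

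Second, for a fixed basis of $\mathfrak g$, the orbit map in second-kind coordinates is not triangular with respect to the flag $V_j$. The subalgebra $\{Y\in\mathfrak g: Y\cdot v\in V_{j-1}\}$ that controls the jump at $j$ depends on $v$, and in general contains no fixed basis vector. For example, let $\mathfrak g=\mathbb RA\oplus\mathbb RB$ act on $\mathbb R^4$ by $Ae_1=e_3$, $Ae_2=e_4$, $Be_2=e_3$, with all other $Ae_i$, $Be_i$ equal to zero. Then
\begin{itemize}
\item $\exp(sA+rB)\cdot v=(x_1,x_2,x_3+sx_1+rx_2,\,x_4+sx_2)$;
\item $S=\{3,4\}$, and the generic set is $x_2\neq0$;
\item the direction responsible for the jump at $4$ is $x_2A-x_1B$.
\end{itemize}
Assigning $s$ to the jump at $3$ gives a leading coefficient $x_1$, which vanishes inside the generic set, and it leaves $Q_4$ independent of $r$. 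Assigning $r$ to the jump at $3$ instead makes $Q_3$ depend on the later parameter $s$, violating (iii). In higher step, the coefficient of a fixed coordinate can moreover depend on the orbit parameters already used. So ``nonvanishing on $U$'' is not the relevant condition.

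\paragraph{The missing idea.} Let the directions depend rationally on $v$. On a Zariski open set, use Cramer's rule to choose
\[
Y_i=Y_i(x_1,\dots,x_{j_i-1})\in\mathfrak g
\]
with $Y_i\cdot v\in V_{j_i-1}$ and $e_{j_i}^*(Y_i\cdot v)=1$. This linear system is consistent precisely because $j_i$ is a jump. Then set
\[
Q(x,t)=\exp(t_1Y_1)\cdots\exp(t_kY_k)\cdot v .
\]
The properties follow as below.
\begin{itemize}
\item Since $\mathfrak g\cdot V_j\subset V_{j+1}$, the factors with index $l>i$ move $v$ only inside $V_{j_{i+1}-1}\subset V_{j_i}$. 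This gives (iii)--(v), with coefficient of $t_i$ exactly $1$.
\item (i) holds because each $\exp(t_iY_i)$ acts by a finite polynomial in $t_iY_i$.
\item (ii) then follows as you argue.
\end{itemize}
This proves the theorem directly. If you prefer to keep your induction, strengthen the hypothesis so that it carries the rational section along. At a jump, extend the section by right multiplication with $\exp(t_kY)$, where $Y\cdot\bar v=0$ and $e_m^*(Y\cdot v)=1$.
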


Next, by changing the variables $(x, t) = (x_1,\ldots, x_m, t_1,\ldots, t_k)$ to $(x, u) = (x_1,\ldots, x_m, u_1,\ldots,u_k)$, we can simplify the item (v) in this theorem to the following item (v) in following theorem.

\begin{theorem}[{\rm 3.1.8 Corollary in \cite{CorwinGreenleaf}}]\label{corwin318}
Let $G$ act unipotently on $V$, let $\{e_1,\ldots,e_m\}$ be a Jordan--H\"older basis, and let $d_j$ be the generic dimension of the $G$-orbits in $V/V_j$, where $V_j=\mathbb{R}\mbox{-span}\{e_{j+1},\ldots,e_m\}$.  Let $k=d_m$ be the generic dimension of the orbits in $V$, let $U$ be the set of $v\in V$ for which the orbit dimension in every quotient $V/V_j$ is $d_j$, let $S=\{j_1<\cdots<j_k\}$ be the set of indices for which $d_j\neq d_{j-1}$, and put $T=\{1,2,\ldots,m\}\setminus S$.  If we identify $v=\sum_{i=1}^m x_i e_i$ with $x\in\mathbb{R}^m$, there are rational functions $P_1,\ldots,P_m$ in the $m+k$ variables $(x,u)=(x_1,\ldots,x_m,u_1,\ldots,u_k)$ such that the following hold.
\begin{description}
\item[{\rm (i)}] The $P_1,\ldots,P_m$ are rational and nonsingular on $U\times\mathbb{R}^k$.  For fixed $x$, they are polynomials in $u$.
\item[{\rm (ii)}] If $v=\sum_{i=1}^m x_i e_i$, the map $P(x,u)=\sum_{j=1}^mP_j(x,u)e_j$ is a diffeomorphism of $\mathbb{R}^k$ onto the orbit $G\cdot v$.
\item[{\rm (iii)}] For fixed $x$, the function $P_j(x,u)$ depends only on those $u_i$ for which $j_i\leq j$.
\item[{\rm (iv)}] If $j\notin S$, then $P_j(x,u)=x_j+R(x_1,\ldots,x_{j-1},u_1,\ldots,u_i)$, where $i$ is the largest index such that $j_i<j$ and $R$ is rational.  Moreover, $P_1(x,u)=x_1$.
\item[{\rm (v)}] $P_{j_i}(x,u)=u_i$, $1\leq i\leq k$.
\end{description}
Finally, $U$ is $G$-invariant and, for fixed $u\in\mathbb{R}^k$, each $P_j(x,u)$ is a rational nonsingular function on $U$ which is constant on $G$-orbits.
\end{theorem}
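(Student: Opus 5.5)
The plan is to obtain the statement from Theorem~\ref{corwin316} by a triangular change of the orbit parameters. Property (v) of Theorem~\ref{corwin316} says that $Q_{j_i}(x,t)=t_i+x_{j_i}+R_i(x_1,\ldots,x_{j_i-1},t_1,\ldots,t_{i-1})$. I will set $u_i:=Q_{j_i}(x,t)$ and solve recursively:
\[
t_1=u_1-x_{j_1}-R_1(x),\qquad
t_i=u_i-x_{j_i}-R_i\bigl(x,t_1(x,u),\ldots,t_{i-1}(x,u)\bigr).
\]
By induction on $i$, each $t_i(x,u)$ is rational and nonsingular on $U\times\mathbb R^k$. For fixed $x$ it is a polynomial in $u_1,\ldots,u_i$, since the $R_i$ are polynomial in $t$ for fixed $x$. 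The map $t\mapsto u$ is therefore a polynomial triangular diffeomorphism of $\mathbb R^k$ with a polynomial inverse. I then define $P_j(x,u):=Q_j(x,t(x,u))$. Properties (i) and (ii) follow from (i) and (ii) of Theorem~\ref{corwin316} by composition. Property (v), $P_{j_i}=u_i$, holds by construction. Property (iii) holds because $t_i$ depends only on $u_1,\ldots,u_i$ and $Q_j$ depends only on the $t_i$ with $j_i\le j$. For (iv), I substitute into (iv) of the earlier theorem, and the same triangularity keeps only $u_1,\ldots,u_i$ with $j_i<j$.

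Next I must identify the index set $S$ and the open set $U$ of Theorem~\ref{corwin316} with the jump indices and the generic set described in the statement. The subspaces $V_j$ are $G$-stable, so projection $V\to V/V_j$ is $G$-equivariant. By (iii), the truncation $(Q_1,\ldots,Q_j)$ depends only on the $t_i$ with $j_i\le j$, and it parametrizes the projected orbit in $V/V_j$. I will check that this parametrization is still an immersion, using (v) and the triangular form. Then the orbit of the image of $v$ in $V/V_j$ has dimension $\#\{i: j_i\le j\}$, which jumps exactly at $j\in S$. Hence this count equals the generic value $d_j$, and $S=\{j: d_j\neq d_{j-1}\}$. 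Points of $U$ realize all the generic dimensions $d_j$ simultaneously. Conversely, the set of points whose orbit dimension in every quotient equals $d_j$ is Zariski open, since it is defined by rank conditions on polynomial matrices. I will shrink or enlarge as needed so that this set coincides with $U$. This identification is the step I expect to be the main obstacle. It relies on the internal construction of $S$ and $U$ in the proof of Theorem~\ref{corwin316}, so in practice I would re-read that proof (induction on $m$ via the last basis vector) and verify that it produces exactly the jump indices.

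Finally come $G$-invariance and constancy on orbits. Invariance of $U$ is immediate: orbit dimensions in each equivariant quotient $V/V_j$ are constant along $G$-orbits. For the constancy, fix $u$ and let $v'=\sum x'_ie_i$ lie in the orbit of $v\in U$. By (ii) and (v), the map sending a point of $G\cdot v$ to its jump coordinates $(x_{j_1},\ldots,x_{j_k})$ is a bijection $G\cdot v\to\mathbb R^k$, with inverse $P(x,\cdot)$. The same bijection arises from $P(x',\cdot)$, because $G\cdot v'=G\cdot v$. Hence $P(x,u)$ and $P(x',u)$ are both the unique orbit point with jump coordinates $u$, so they are equal. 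Rationality and nonsingularity in $x$ on $U$ for fixed $u$ were already established above.
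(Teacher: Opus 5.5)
Your core step is the paper's route. The paper's entire argument is the remark that changing variables from $(x,t)$ to $(x,u)$ turns item (v) of Theorem~\ref{corwin316} into $P_{j_i}=u_i$. Your triangular inversion of $u_i=Q_{j_i}(x,t)$ and the substitution $P_j(x,u)=Q_j(x,t(x,u))$ carry this out correctly for (i)--(v). Your recovery of $S=\{j: d_j\neq d_{j-1}\}$ from the truncated parametrizations of the projected orbits is also sound, since the nonempty Zariski-open $U$ meets the generic locus of each $V/V_j$.

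The genuine gap is the identification of the open set.

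\begin{itemize}
\item With Theorem~\ref{corwin316} used as a black box, your argument shows only that its set $U$ is contained in the generic set $U'=\{v:\dim G\cdot(v+V_j)=d_j\ \text{for all } j\}$.
\item Nothing in that statement forces $U=U'$: deleting a hypersurface from $U$ leaves (i)--(v) true.
\item ``Shrinking'' is not available, because the statement asserts (i)--(v) on all of $U'$.
\item ``Enlarging'' requires proving that $t(x,u)$ and the $P_j(x,u)$ have no poles on $(U'\setminus U)\times\mathbb R^k$ and still parametrize $G\cdot v$ there.
\end{itemize}

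That does not follow from density. It also does not follow from the fact that, for $v\in U'$, the orbit is set-theoretically a graph over the jump coordinates. A real rational function can extend continuously across a zero of its denominator (e.g.\ $x^3/(x^2+y^2)$), so you need denominators that are explicitly nonvanishing on all of $U'$.

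That information has to come from inside the inductive proof of Theorem~\ref{corwin316}. The paper sidesteps the issue by taking the sets $S$, $U$ of the two statements to coincide, as in Corwin--Greenleaf. Your proposal explicitly defers this step, so as written it is incomplete.

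Your final orbit-constancy argument depends on the same point. It needs $v'\in G\cdot v$ to lie in the set where (ii) and (v) hold, i.e.\ it uses $G$-invariance. You proved $G$-invariance for $U'$, not for the black-box $U$.
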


The above two theorems are also expressed in different forms: 

\begin{theorem}[{\rm 3.1.9 Theorem in \cite{CorwinGreenleaf}}]\label{corwin319}
Given a Jordan-H\"older basis $\{e_1,\ldots, e_m \}$ for a unipotent action of $G$ on vector space $V$, define the generic dimensions $d_i$ $(1\leq i\leq m)$, a partition $S\cup T = \{1, 2,\ldots, m \}$, and the $G$-invariant set $U$ of generic orbits as in Theorems \ref{corwin316} and \ref{corwin318}.
Let $V_S =\mathbb{R}\mbox{-span}\{e_i: i \in S\}$, $V_T = \mathbb{R}\mbox{-span}\{e_i: i \in T\}$ and let $p_S, p_T$ be the projections of $V$ to $V_S, V_T$. Then
\begin{description}
\item[{\rm (i)}] Every $G$-orbit in $U$ meets $V_T$ in a unique point. In particular, $U\cap V_T$ is nonempty and Zariski open in $V_T$.
There is a map $\psi: (U\cap V_T) \times V_S \to U$ such that

\item[{\rm (ii)}] $\psi$ is a rational, nonsingular bijection with rational, nonsingular inverse,
\item[{\rm (iii)}] For each $v \in V_T\cap U$, the map $P_v(\cdot)= p_T(\psi(v,\cdot ))$ from $V_S$ into $V_T$ is a polynomial, and the orbit $G \cdot v$ is its graph,
\item[{\rm (iv)}] The Jacobian determinant of $\psi$ is identically $1$.
\end{description}
\end{theorem}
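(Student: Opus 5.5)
The statement is Theorem~\ref{corwin319} (Corwin--Greenleaf 3.1.9), and I would derive it directly from Theorems~\ref{corwin316} and~\ref{corwin318}.

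Throughout, fix the Jordan--H\"older basis and the invariant set $U$ of generic orbits. Let $P(x,u)$ be the rational orbit parametrization of Theorem~\ref{corwin318}, with $P_{j_i}(x,u)=u_i$ for $j_i\in S$.

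\emph{Cross-section.} For (i), take $v\in U$ with coordinates $x$. By (ii) of Theorem~\ref{corwin318}, every point of $G\cdot v$ is $P(x,u)$ for a unique $u$, and its $S$-coordinates are exactly $u$ by (v). The point $P(x,0)$ therefore lies in $V_T$, and it is the only point of the orbit in $V_T$. Thus every orbit in $U$ meets $V_T$ exactly once. $U\cap V_T$ is nonempty because it contains $P(x,0)$, and it is Zariski open in $V_T$ as the intersection of a Zariski open set with a linear subspace.

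\emph{The map $\psi$.} For $v\in U\cap V_T$ and $w\in V_S$ with coordinates $(w_{j_1},\dots,w_{j_k})$, set $\psi(v,w)=P(x(v),(w_{j_1},\dots,w_{j_k}))$. For (ii), rationality and nonsingularity of $\psi$ follow from (i) of Theorem~\ref{corwin318}. Bijectivity follows from the cross-section statement together with the diffeomorphism in (ii) of Theorem~\ref{corwin318}.

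\emph{The inverse.} $\psi^{-1}(y)=(\text{the }V_T\text{-point of }G\cdot y,\ p_S(y))$. The second component is linear. For the first, the last sentence of Theorem~\ref{corwin318} says each $P_j(\cdot,u)$ is rational, nonsingular on $U$ and constant on orbits. Hence $v=P(y,0)$ for $y\in U$, which is rational and nonsingular in $y$.

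\emph{Graph and Jacobian.} For (iii), $p_T\psi(v,w)$ has coordinates $P_j(x,u)$, $j\in T$. These are polynomial in $u=w$ by (i) of Theorem~\ref{corwin318}, and the orbit is the graph $\{w+P_v(w)\}$ by (v). For (iv), order coordinates by index and write $\psi=(\psi_1,\dots,\psi_m)$ in the variables $(x_T,w)$.
\begin{itemize}
\item[] For $j\in S$, $\psi_j=w_j$.
\item[] For $j\in T$, (iv) of Theorem~\ref{corwin318} gives $\psi_j=x_j+R(x_1,\dots,x_{j-1},u_1,\dots,u_i)$ with $j_i<j$.
\end{itemize}
Here $R$ depends on earlier $x$'s, and on $v\in V_T$ those are just the $T$-coordinates below $j$. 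The Jacobian matrix is therefore triangular in the index order with ones on the diagonal, so its determinant is $1$.

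\emph{Main obstacle.} The delicate point is this triangularity. One has to check that the dependence of $R$ on $x_1,\dots,x_{j-1}$ is harmless after restricting to $v\in V_T$, where the $S$-coordinates of $x$ vanish. The subtlety is that earlier $x$'s at $S$-indices are zero, not independent variables, so they contribute no off-diagonal terms of the wrong kind. I would handle this by an explicit induction on $j$.
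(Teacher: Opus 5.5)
Your derivation is correct and follows essentially the same route as the paper, which gives no proof of its own: it quotes the result from Corwin--Greenleaf as a reformulation of Theorems~\ref{corwin316} and~\ref{corwin318}, and you carry out exactly that reformulation via $v=P(x,0)$, $\psi(v,w)=P(x(v),w)$, $\psi^{-1}(y)=(P(y,0),p_S(y))$ and the unit-triangular Jacobian. The ``main obstacle'' you flag is not a real one, so no induction is needed: on $V_T$ the $S$-coordinates of $x$ are identically $0$, so for $j\in T$ each $\psi_j$ is $x_j$ plus a function of variables of index $<j$.
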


\subsection{Decomposition of the right regular representation}\label{B.3}
We recall the Fourier inversion formula and the Kirillov character formula.
The former describes the content of a decomposition of the right regular representation into a direct integral of irreducible unitary representations of the nilpotent Lie group $G$. To define this formula, we need to compute the trace of the Fourier transform $\pi_l(\phi)$ defined by 
\[  \pi_l(\phi) := \int_G\pi_l(\sigma )\phi(\sigma)d\sigma \quad \phi \in \mathcal{S}(G)\]
where $l \in \mathfrak{g}^\ast$ and $\mathcal{S}(G)$ is the space of rapidly decreasing functions on $G$ (a.k.a. Schwartz class).

The Kirillov character formula is a tool for the computation of the trace $\mbox{Tr}(\pi_l(\phi))$ of $\pi_l(\phi)$. We first recall several results to compute them.  

First, we recall notions of weak or strong Malcev basis of a nilpotent Lie group $G$ and its Lie algebra $\mathfrak{g}$.

\begin{definitiontheorem}[1.1.13 Theorem and Note in \cite{CorwinGreenleaf}]\label{malcevbasis}
Let $\mathfrak{g}$ be a nilpotent Lie algebra, and let $\mathfrak{g}_1 \supset \mathfrak{g}_2 \supset \cdots \supset \mathfrak{g}_k$ be subalgebras, with ${\rm dim}\; \mathfrak{g}_i = m_i$ and ${\rm dim}\; \mathfrak{g} = n$.
\begin{description}
\item[{\rm (a)}] $\mathfrak{g}$ has a basis $\{X_1, \ldots, X_n\}$ such that
\begin{description}
\item[{\rm (i)}] for each $m$, $\mathfrak{h}_m = \mathbb{R}\mbox{-span}\{ X_1, \ldots, X_m\}$ is a subalgebra of $\mathfrak{g}$,
\item[{\rm (ii)}] for $1 \leq j \leq k$, $\mathfrak{h}_{m_j} = \mathfrak{g}_j$.
\end{description}
This basis is called a weak Malcev basis for $\mathfrak{g}$ (through $\mathfrak{g}_1,\ldots, \mathfrak{g}_k$).

\item[{\rm (b)}] If the $\mathfrak{g}_i$ are ideals of $\mathfrak{g}$, then one can pick the $X_i$ so that (i) is replaced by
\begin{description}
\item[{\rm (iii)}] for each $m$, $\mathfrak{h}_m = \mathbb{R}\mbox{-span}\{ X_1, \ldots, X_m\}$ is an ideal of $\mathfrak{g}$.
\end{description}
This basis is called a strong Malcev basis for $\mathfrak{g}$ (through $\mathfrak{g}_1,\ldots, \mathfrak{g}_k$).
\end{description}
\end{definitiontheorem}

\begin{theorem}[4.2.1 Theorem in \cite{CorwinGreenleaf}]\label{nilpotenttrace} Let $\pi =\pi_l$ be an irreducible representation of a nilpotent Lie group $G$, let $\mathfrak{m}$ be a polarization for $l$, and model $\pi$ in $L^2(\mathbb{R}^k)$ {\rm (}i.e. representation space of $\pi${\rm )} using any weak
Malcev basis through $\mathfrak{m}$. If $\phi \in \mathcal{S}(G)$, then $\pi_l(\phi)$ is trace class and
\begin{equation*}
\pi_l(\phi) f(s) = \int_{\mathbb{R}^k}K_\phi (s, t)f(t) dt, \quad \mbox{for all} \quad f \in L^2(\mathbb{R}^k )
\end{equation*}
where $K_\phi \in \mathcal{S}(\mathbb{R}^k \times \mathbb{R}^k)$. Furthermore, $\mbox{Tr}\;\pi_l(\phi)$ is given by
\begin{equation*}
\mbox{Tr}\;\pi_l(\phi) = \int_{\mathbb{R}^k}K_\phi (s, s)ds. \quad \mbox{(absolutely convergent)} 
\end{equation*}
\end{theorem}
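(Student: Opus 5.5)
The plan is to follow the standard Corwin--Greenleaf route: write down an explicit model of \(\pi_l\) in Malcev coordinates, compute the kernel of \(\pi_l(\phi)\), show that this kernel is Schwartz, and then deduce the trace-class property and the diagonal trace formula from general facts about Schwartz kernels on \(\mathbb R^k\).

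\emph{The model.} Choose a weak Malcev basis \(\{X_1,\ldots,X_n\}\) through \(\mathfrak m\), as in Definition-Theorem~\ref{malcevbasis}, with \(\mathfrak m=\mathbb R\text{-span}\{X_1,\ldots,X_m\}\), and put \(k=n-m\). Define
\[
\gamma(t)=\exp(t_1X_{m+1})\cdots\exp(t_kX_n),
\]
which gives a polynomial cross-section of \(M\backslash G\simeq\mathbb R^k\). Every \(g\in G\) factors uniquely as \(g=\mu(g)\gamma(\tau(g))\) with \(\mu(g)\in M\). In Malcev coordinates the maps \(g\mapsto(\mu(g),\tau(g))\) and their inverses are polynomial, and Haar measure on \(G\) (resp.\ \(M\)) is Lebesgue measure in these coordinates. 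The model of \(\pi_l=\operatorname{Ind}_M^G\chi_l\) on \(L^2(\mathbb R^k)\) is then
\[
(\pi_l(g)f)(s)=\chi_l\bigl(\mu(\gamma(s)g)\bigr)\,f\bigl(\tau(\gamma(s)g)\bigr).
\]

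\emph{The kernel.} Insert this model into \(\pi_l(\phi)f(s)=\int_G\phi(g)(\pi_l(g)f)(s)\,dg\) and make two changes of variables. First substitute \(g\mapsto\gamma(s)^{-1}g\), using left invariance of Haar measure. Then write \(g=m\gamma(t)\), noting that the Jacobian of \((m,t)\mapsto m\gamma(t)\) is \(1\) because it is unipotent in Malcev coordinates. This gives
\[
K_\phi(s,t)=\int_M\phi\bigl(\gamma(s)^{-1}m\,\gamma(t)\bigr)\,\chi_l(m)\,dm .
\]
The map \((s,m,t)\mapsto\gamma(s)^{-1}m\gamma(t)\) is a polynomial map in the coordinates, though not a diffeomorphism of \(\mathbb R^{k+m+k}\) onto \(G\). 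The substitute I will use is the following.

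\emph{Schwartz property (main obstacle).} I will show that \(\Phi(s,m,t)=\phi(\gamma(s)^{-1}m\gamma(t))\) is Schwartz on \(\mathbb R^{k+m+k}\). After the substitution \(m=m'\gamma(s)\gamma(t)^{-1}\)-type reparametrization (polynomial with polynomial inverse), \(\Phi\) becomes \(\phi(\gamma(s)^{-1}m'\gamma(s))\) composed with a product structure. The key point to verify is that
\[
(s,g)\mapsto\gamma(s)^{-1}g
\]
has polynomial inverse \((s,h)\mapsto\gamma(s)h\). I would then estimate
\[
|s|+|t|+|m|\le C\bigl(1+|s|+|\gamma(s)^{-1}m\gamma(t)|\bigr)^N,
\]
using the triangular, unipotent structure of the multiplication polynomials. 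Combined with rapid decay of \(\phi\), this should give rapid decay of \(\Phi\) jointly in all variables, at least along the diagonal region. In fact I expect that only this weaker, two-variable decay is needed: decay in \(s-t\)-type variables and in \(m\), with polynomially controlled growth elsewhere. This step is where I expect the real work. My first attempt will be to prove the polynomial coercivity estimate directly from the factorization \(g=\mu(g)\gamma(\tau(g))\), namely \(\tau(\gamma(s)^{-1}m\gamma(t))\) versus \(t\) and \(s\), and to differentiate under the integral.

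\emph{Decay of the kernel.} Since \(m\mapsto l(\log m)\) is a homomorphism \(M\to\mathbb R\), the character \(\chi_l\) is \(e^{2\pi i\lambda(m)}\) for a polynomial \(\lambda\) with polynomially bounded derivatives. The \(m\)-integral of a Schwartz function against \(\chi_l\) therefore yields \(K_\phi\in\mathcal S(\mathbb R^k\times\mathbb R^k)\).

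\emph{Trace class and the trace formula.} An integral operator \(A\) with kernel \(K\in\mathcal S(\mathbb R^{2k})\) factors as
\[
A=(1+|s|^2-\Delta)^{-N}\circ B,
\]
where \(B\) has kernel \((1+|s|^2-\Delta_s)^NK\in\mathcal S\). For \(N>k/2\) the first factor is Hilbert--Schmidt (diagonalize in the Hermite basis), and \(B\) is Hilbert--Schmidt, so \(A\) is trace class. Writing \(A=B_1B_2\) with both \(B_i\) having Schwartz kernels (equivalently, factoring through the Hermite expansion), we get
\[
\operatorname{Tr}A=\iint B_1(s,u)B_2(u,s)\,du\,ds=\int K(s,s)\,ds .
\]
The integrals converge absolutely by Fubini, which gives the formula in Theorem~\ref{nilpotenttrace}.
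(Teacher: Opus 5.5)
Your model, the two changes of variables, and the kernel $K_\phi(s,t)=\int_M\chi_l(m)\,\phi(\gamma(s)^{-1}m\gamma(t))\,dm$ are correct. This is exactly Proposition~\ref{nilpLietrace}, which the paper quotes from Corwin--Greenleaf rather than proving. Your last step is also fine: a Schwartz kernel gives a trace-class operator with $\operatorname{Tr}=\int K(s,s)\,ds$. The gap is the Schwartz property itself, and that is the real content of the theorem.

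The function $\Phi(s,m,t)=\phi(\gamma(s)^{-1}m\gamma(t))$ is \emph{not} Schwartz on $\mathbb R^{k+\dim\mathfrak m+k}$. Indeed $\Phi(s,e,s)=\phi(e)$ for every $s$, and $\Phi$ fails to decay on the whole unbounded set where $\gamma(s)^{-1}m\gamma(t)$ stays bounded. Your coercivity estimate has $|s|$ on the right-hand side, so it gives decay in $(m,t)$ only at the price of polynomial growth in $s$. The diagonal region is precisely where decay fails, not where it holds.

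Your next step then treats $\chi_l$ as a harmless bounded multiplier with tame derivatives. So the argument never uses that $\mathfrak m$ is a \emph{maximal} subordinate subalgebra, and it would apply verbatim to $\operatorname{Ind}_M^G\chi$ for any subordinate $\mathfrak m$. Take $\mathfrak m=0$ and $G=\mathbb R$: your argument would assert that $\phi(t-s)\in\mathcal S(\mathbb R^2)$ and that convolution by $\phi$ is trace class. Both are false.

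The decay of $K_\phi$ along the diagonal comes entirely from the oscillation of $\chi_l$, and it is the polarization property that makes this oscillation effective. Here is the Heisenberg case. In $\mathrm{Heis}_3(\mathbb R)$ take $M=\{[z,y,0]\}$, $\gamma(s)=[0,0,s]$ and $\chi_l([z,y,0])=e^{2\pi i hz}$. Then $\gamma(s)^{-1}[z,y,0]\gamma(t)=[z-sy,y,t-s]$, so
\[
K_\phi(s,t)=\iint e^{2\pi i h(z'+sy)}\,\phi([z',y,t-s])\,dz'\,dy .
\]
Its decay in $s$ comes only from the factor $e^{2\pi i hsy}$ with $h\neq0$. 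For $h=0$ the subalgebra $\mathfrak m$ is subordinate but no longer maximal, and the decay disappears.

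The missing ingredient is Kirillov's theorem: in the $L^2(\mathbb R^k)$ model through a polarization, $d\pi_l(\mathfrak U(\mathfrak g))$ is the full algebra of polynomial-coefficient differential operators on $\mathbb R^k$. This is where maximality of $\mathfrak m$ enters, and it is the standard route to the cited Corwin--Greenleaf theorem. With it, for all such operators $A,B$ one has $A\,\pi_l(\phi)\,B=\pi_l(\phi_{A,B})$ with $\phi_{A,B}\in\mathcal S(G)$.

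Your crude estimate then becomes useful. It does show that the kernel of $\pi_l(\psi)$, for $\psi\in\mathcal S(G)$, is continuous and bounded by $C_\psi(1+|s|)^{N_0}$, with $N_0$ independent of $\psi$. Apply this to the kernels $s^\alpha\partial_s^\beta\,t^\gamma\partial_t^\delta K_\phi$ (up to transposition in the $t$-variables) for all multi-indices. This forces $K_\phi\in\mathcal S(\mathbb R^{2k})$, after which your trace argument goes through.

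For trace class alone there is a shorter route. Write $(1+|s|^2-\Delta)^N=d\pi_l(D)$ for some $D\in\mathfrak U(\mathfrak g)$. Then $\pi_l(\phi)=(1+|s|^2-\Delta)^{-N}\pi_l(D\ast\phi)$, and the first factor is trace class for $N>k$.
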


A detailed expression of the kernel function $K_\phi$ is given as follows:
Let $\{X_1,\ldots,X_n\}$ be the weak Malcev basis taken in Theorem \ref{nilpotenttrace} and put $p = n-k = \mbox{dim}\;\mathfrak{m}$. Define polynomial maps $\gamma: \mathbb{R}^n \to G$, $\alpha:\mathbb{R}^p\to M =\exp(\mathfrak{m})$, $\beta:\mathbb{R}^k \to G$ by
\begin{align*}
&{} \gamma(s,t) = \exp s_1X_1\cdot\cdots\cdot\exp s_pX_p\cdot\exp t_1X_{p+1}\cdot\cdots\cdot\exp t_kX_n \\  
&{} \alpha(s) = \gamma(s,0),\quad \beta(t) = \gamma(0,t),
\end{align*}
and let $dg$, $dm$, $d\dot{g}$ be the invariant measures on $G$, $M$, $M\backslash G$ determined by Lebesgue measures $dsdt$, $ds$, $dt$ through maps. $\gamma$, $\alpha$, $\beta$ in the above. 

Then, we have the following:
\begin{proposition}[4.2.2 Proposition in \cite{CorwinGreenleaf}]\label{nilpLietrace}
If we take the standard basis realization of $\pi =\pi_l$ in $L^2(\mathbb{R}^k 
)$ 
relative to the given Malcev basis, the kernel $K_\phi$ has the form 
\begin{equation}
K_\phi(t',t) = \int_M\chi_l(m)\phi(\beta(t')^{-1}m\beta(t))dm \quad \mbox{{\rm (} absolutely convergent {\rm )}}  
\end{equation}
where $\chi_l(\exp Y) = e^{2\pi\sqrt{-1}l(Y)}$ and $\beta$ is the map defined above. 
\end{proposition}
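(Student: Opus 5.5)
The plan is to compute $\pi_l(\phi)$ directly in the standard realization of $\mathrm{Ind}_M^G(\chi_l)$ and to read off the kernel by a single change of variables on $G$. First I would fix the realization as in Appendix~\ref{Inducedrep}, adapted to the character: $\pi_l$ acts by right translation on functions $\widetilde F:G\to\mathbb C$ with $\widetilde F(mg)=\chi_l(m)\widetilde F(g)$ for $m\in M$. For the model on $L^2(\mathbb R^k)$ I would use the fact that, for a weak Malcev basis through $\mathfrak m$, the map $(s,t)\mapsto \alpha(s)\beta(t)=\gamma(s,t)$ is a polynomial diffeomorphism $\mathbb R^p\times\mathbb R^k\to G$. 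Consequently $\beta$ is a cross-section for $M\backslash G$, and $\widetilde F\mapsto F:=\widetilde F\circ\beta$ is the unitary identification with $L^2(\mathbb R^k,dt)$. I would also record that Lebesgue measure $ds\,dt$ pulled back via $\gamma$ is a Haar measure on $G$, that $ds$ is a Haar measure on $M$, and that these are compatible with $dg=dm\,d\dot g$. This uses the standard fact (Corwin--Greenleaf, Ch.~1) that Lebesgue measure in Malcev coordinates of the second kind is bi-invariant on the unimodular groups $G$ and $M$.

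Next I would compute, for $F$ continuous with compact support,
\[
(\pi_l(\phi)F)(t')=\int_G\phi(g)\,\widetilde F(\beta(t')g)\,dg .
\]
The key step is the substitution $g=\beta(t')^{-1}\,m\,\beta(t)$ with $(m,t)\in M\times\mathbb R^k$. For fixed $t'$ this is the composition of left translation by $\beta(t')^{-1}$, which preserves Haar measure, with the coordinate map $(m,t)\mapsto m\beta(t)$. The latter carries $dm\,dt$ to $dg$ by the normalization fixed above. After the substitution, $\beta(t')g=m\beta(t)$, so $\widetilde F(\beta(t')g)=\chi_l(m)F(t)$. Fubini then gives
\[
(\pi_l(\phi)F)(t')=\int_{\mathbb R^k}\Bigl(\int_M\chi_l(m)\,\phi\bigl(\beta(t')^{-1}m\beta(t)\bigr)\,dm\Bigr)F(t)\,dt ,
\]
which is the asserted formula for $K_\phi$. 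Density of such $F$ extends the identity to all of $L^2(\mathbb R^k)$.

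Finally I would justify absolute convergence and the use of Fubini. In the coordinates $(s,t)$, the map $(s,t,t')\mapsto\gamma^{-1}\bigl(\beta(t')^{-1}\alpha(s)\beta(t)\bigr)$ is polynomial. For fixed $t'$ it is a polynomial bijection with polynomial inverse. Since $\phi\circ\gamma$ is Schwartz, $|\phi(\beta(t')^{-1}\alpha(s)\beta(t))|$ therefore decays faster than any power of $|(s,t)|$, locally uniformly in $t'$. This gives absolute convergence of the $M$-integral and of the iterated integral.

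I expect the main obstacle to be the measure bookkeeping rather than the algebra: checking that $(m,t)\mapsto m\beta(t)$ transports $dm\,dt$ exactly to $dg$ with constant $1$. I would handle this by invoking the Jacobian-one property of polynomial Malcev coordinates; the Schwartz-decay estimate for the polynomial reparametrization is a routine secondary point. A related point to keep straight is that the equivariance convention for $\widetilde F$ must match the sign in $\chi_l$, so I would fix that convention at the outset.
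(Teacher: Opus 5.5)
Your proposal is correct and is essentially the standard Corwin--Greenleaf argument that the paper quotes without giving a proof. You write $(\pi_l(\phi)F)(t')=\int_G\phi(g)\widetilde F(\beta(t')g)\,dg$, translate by $\beta(t')^{-1}$ using left invariance of $dg$, and split $g=m\beta(t)$ with $dg=dm\,dt$; this last identity holds by the very definition of $dg,dm,d\dot g$ through $\gamma,\alpha,\beta$, and the real input is that these are invariant measures. You then pull out $\chi_l(m)F(t)$ by equivariance, and absolute convergence comes from the Schwartz decay of $\phi\circ\gamma$ under polynomial changes of coordinates.

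One small caution concerns conventions. Your equivariance $\widetilde F(mg)=\chi_l(m)\widetilde F(g)$ is the right one for the stated kernel, and it is the convention used in the Heisenberg computation of Subsubsection~\ref{2n+1dimensionalHeisenberg}. It is, however, opposite to the convention $\phi(h^{-1}g)=\pi(h)\phi(g)$ written in Appendix~\ref{Inducedrep}, which would put $\overline{\chi_l(m)}$ in the kernel, so you should not describe it as taken ``as in'' that appendix.
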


\begin{remark} If we directly relate representations $G$ to $\Gamma$, then we need to identify a function $\phi:\Gamma \to \mathbb{C}$ with that $\widetilde{\phi}:G \to \mathbb{C}$. 
Although there seem to be several methods to perform it, if we take an easy way to define $\widetilde{\phi}(g) = \phi(\sigma)$ if $x \in \sigma\mathcal{D}$ with a fundamental domain $\mathcal{D}$ of the canonical projection $G \to G/\Gamma$, then $\widetilde{\phi} \in L^1(G)$ but $\not\in \mathcal{S}(G)$ and thus, $\pi(\widetilde{\phi})$ is a compact operator but not in trace class in general. (see Appendix~\ref{AppendixD}).
\end{remark}

Once we proceed with the computation further, we arrive at the following formula.
Given any Euclidean measure $dX$ on $\mathfrak{g}$, define the Euclidean Fourier transforms $\hat{f}$ of functions $f$ on $G$ to be
\[ \hat{f}(l) = \int_\mathfrak{g}e^{2\pi\sqrt{-1}l(X)}f(\exp X)dX \quad \mbox{for all}\quad l \in \mathfrak{g}^\ast.
\]

Each coadjoint orbit $\mathcal{O}_l=\mbox{Ad}^\ast G(l)$ of $l$, being a closed set, 
carries an invariant measure $\mu$ that is unique up to a scalar multiple because $\mathcal{O}_l \simeq R_l\backslash G$, 
where $R_l = \mbox{Stab}_G(l) = \{x \in G|(\mbox{Ad}^\ast x)l = l\}$. 

\begin{theorem}[4.2.4 Theorem in \cite{CorwinGreenleaf}, Kirillov character formula]\label{nilpLiecharcter} If $\pi$ is an irreducible unitary representation of a nilpotent Lie group $G$, corresponding to the coadjoint orbit $\mathcal{O}_l=\mbox{Ad}^\ast G(l) \subset \mathfrak{g}^\ast$, there is a unique 
choice of the invariant measure $\mu$ on $\mathcal{O}_l$ such that
\[ \mbox{Tr}\;\pi(\phi) = \int_{\mathcal{O}_l}\hat{\phi}(\xi)\,\mu(d\xi) \quad \mbox{for all}\quad \phi \in \mathcal{S}(G),
\]
The integral is absolutely convergent. 
\end{theorem}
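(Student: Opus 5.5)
The plan is to start from the trace formula already recorded above. By Theorem~\ref{nilpotenttrace} and Proposition~\ref{nilpLietrace}, for $\phi\in\mathcal S(G)$ we have
\[
\mathrm{Tr}\,\pi_l(\phi)=\int_{\mathbb R^k}\int_M \chi_l(m)\,\phi\bigl(\beta(t)^{-1}m\beta(t)\bigr)\,dm\,dt ,
\]
and this integral is absolutely convergent. The strategy is to rewrite the inner integral as an integral of $\hat\phi$ over an affine subspace of $\mathfrak g^*$. Then the outer integral sweeps that affine subspace out into the whole orbit $\mathcal O_l$.

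First, for fixed $t$, I will pull the inner integral back to $\mathfrak m$ through $m=\exp Y$. For a nilpotent group the Haar measure of $M$ corresponds to Lebesgue measure on $\mathfrak m$, with Jacobian $1$. Writing $g_t=\beta(t)$, we have $g_t^{-1}\exp(Y)g_t=\exp(\Ad(g_t^{-1})Y)$. So the inner integral equals $\int_{\mathfrak m}e^{2\pi\sqrt{-1}\,l(Y)}\phi_t(\exp Y)\,dY$, where $\phi_t=\phi(g_t^{-1}\cdot g_t)$; after the linear change of variables, $\Ad(g_t)$ has determinant $1$.

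Next I apply the partial Euclidean Fourier transform. If $F$ is Schwartz on $\mathfrak g$, then $\int_{\mathfrak m}e^{2\pi\sqrt{-1}\,l(Y)}F(Y)\,dY=\int_{\mathfrak m^\perp}\hat F(l+\xi)\,d\xi$. This is Fourier inversion in the $\mathfrak g/\mathfrak m$ directions, with the measures on $\mathfrak m$, $\mathfrak g/\mathfrak m$ and $\mathfrak m^\perp$ normalized compatibly. Since $\widehat{\phi_t}(\lambda)=\hat\phi(\Ad^*(g_t)\lambda)$, the trace becomes
\[
\mathrm{Tr}\,\pi_l(\phi)=\int_{\mathbb R^k}\int_{l+\mathfrak m^\perp}\hat\phi\bigl(\Ad^*(\beta(t))\lambda\bigr)\,d\lambda\,dt .
\]

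The geometric step, which I expect to be the main obstacle, is to show that
\[
\Phi:\mathbb R^k\times(l+\mathfrak m^\perp)\to\mathcal O_l,\qquad (t,\lambda)\mapsto\Ad^*(\beta(t))\lambda
\]
is a diffeomorphism, and that $d\lambda\,dt$ pushes forward to a $G$-invariant measure. For this I would first invoke Pukanszky's lemma for polarizations: $\Ad^*(M)l=l+\mathfrak m^\perp$, with $R_l\subset M$. I would prove it by induction using the orbit cross-section theorems~\ref{corwin316}--\ref{corwin319} applied to the unipotent action of $M$ on $l+\mathfrak m^\perp$. The dimension count supports this, since $\dim\mathfrak m^\perp=k$ and $\dim\mathcal O_l=2k$. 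Then $G=M\cdot\beta(\mathbb R^k)$ with unique factorization, so $\mathcal O_l\cong R_l\backslash G\cong (R_l\backslash M)\times\mathbb R^k$, which is exactly the parametrization by $\Phi$ (with the convention $l\cdot x=\Ad^*(x^{-1})l$ adjusted to $\beta(t)^{-1}$). For invariance, Lebesgue $d\lambda$ is an $M$-invariant measure on $l+\mathfrak m^\perp\cong R_l\backslash M$, and $dt$ is the invariant measure on $M\backslash G$. The quotient-measure formula $\int_{R_l\backslash G}=\int_{M\backslash G}\int_{R_l\backslash M}$, valid because all the groups are unimodular, shows that $\Phi_*(d\lambda\,dt)$ is the $G$-invariant measure $\mu$ on $\mathcal O_l\cong R_l\backslash G$.

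Finally, Fubini is justified by the absolute convergence in Proposition~\ref{nilpLietrace}. The orbit integral converges absolutely because $\mathcal O_l$ is closed, $\hat\phi$ is Schwartz, and in the coordinates of Theorem~\ref{corwin319} the orbit is a polynomial graph carrying Lebesgue measure. Uniqueness of $\mu$ follows because the functions $\hat\phi$ with $\phi\in\mathcal S(G)$ run through all of $\mathcal S(\mathfrak g^*)$, and this class separates Radon measures on $\mathcal O_l$.
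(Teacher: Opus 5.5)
Your proposal is correct. It is essentially the standard Corwin--Greenleaf proof, which the paper quotes without reproducing; the paper only signals that one continues the computation from the kernel formula of Proposition~\ref{nilpLietrace}, via partial Fourier inversion along $\mathfrak m$, Pukanszky's lemma $\Ad^*(M)l=l+\mathfrak m^\perp$, and the quotient-measure decomposition for $R_l\subset M\subset G$.

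One repair is needed in your last paragraph. The interchange of the $(t,\lambda)$-integrals must be justified by $\int_{\mathcal O_l}|\hat\phi|\,d\mu<\infty$ via Tonelli through $\Phi$, not by Proposition~\ref{nilpLietrace}, which gives only fiberwise convergence in $m$ (indeed $\iint|\phi(\beta(t)^{-1}m\beta(t))|\,dm\,dt=\infty$ already for $\Heis_3(\R)$). Also, for non-generic orbits that integrability needs the layered version of the polynomial-graph description, since Theorem~\ref{corwin319} is stated only for the generic set $U$.
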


Under the above preparation, we can state the following formula. 

\begin{theorem}[4.3.9 Theorem in \cite{CorwinGreenleaf}, Fourier Inversion formula]\label{FourierinversionLie}
Let $\{X_1,\ldots,X_n\}$ be a strong Malcev basis for a nilpotent Lie algebra $\mathfrak{g}$ and 
let $\{l_1,\ldots, l_n\}$ be the dual basis for $\mathfrak{g}^\ast$. 
Take a set $U$ of generic coadjoint orbits, index sets 
\[ S = \{i_1 < \cdots < i_{2k}\} \] 
and $T$ which will appear in Theorem \ref{corwin319}, and (absolute value of) Pfaffian $|\mbox{Pf}(l)|$ satisfies
\[ |\mbox{Pf}(l)|^2 = \det \;B, \quad B= (B_{jl}) = (B_l(X_{i_j},X_{i_l})) := (l([X_{i_j},X_{i_l}])) \quad j,l =1,\ldots, 2k \] 
Then for $\phi \in \mathcal{S}( G)$, $\phi(e)$ is given by an absolutely convergent integral 
\begin{equation}
\phi(e) = \int_{U\cap V_T}|\mbox{Pf}(l)|\mbox{Tr}\;\pi_l(\phi) dl \label{B2}
\end{equation}
where $dl$ is the Euclidean measure on $V_T = \mathbb{R}\mbox{-span}\{l_i| i \in T\}$ such that the cube determined by $\{l_i| i \in T\}$ has mass $1$. 
\end{theorem}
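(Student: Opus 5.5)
The statement is the Fourier inversion formula for Schwartz functions. I would deduce it from Kirillov's character formula (Theorem~\ref{nilpLiecharcter}) together with the orbital parametrization of the generic layer (Theorems~\ref{corwin318} and~\ref{corwin319}). The Euclidean Fourier transform on $\mathfrak g$ serves as the bridge.

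\emph{Step 1: reduction to Euclidean inversion.} Put $F=\phi\circ\exp\in\mathcal S(\mathfrak g)$. Euclidean Fourier inversion gives
\[
\phi(e)=F(0)=\int_{\mathfrak g^*}\hat\phi(\xi)\,d\xi,
\]
where $d\xi$ is the measure dual to $dX$. Since the complement of the Zariski open $G$-invariant set $U$ is Lebesgue null, the integral may be restricted to $U$.

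\emph{Step 2: disintegration along orbits.} By Theorem~\ref{corwin319}, applied to the coadjoint action with the dual Jordan--H\"older basis $\{l_n,\dots,l_1\}$ coming from the strong Malcev basis, the map
\[
\psi:(U\cap V_T)\times V_S\to U
\]
is a rational bijection with Jacobian identically $1$. Each slice $\psi(\{l\}\times V_S)$ is the orbit $\mathcal O_l$, realised as a graph over $V_S$. Fubini therefore gives
\[
\int_U\hat\phi\,d\xi=\int_{U\cap V_T}\Bigl(\int_{V_S}\hat\phi(\psi(l,s))\,ds\Bigr)dl .
\]

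\emph{Step 3: comparison of measures.} The inner integral is $\int_{\mathcal O_l}\hat\phi\,d\nu_l$, where $\nu_l$ is the pushforward of Lebesgue measure on $V_S$. I must show that $\nu_l=|\mathrm{Pf}(l)|\,\mu_l$, with $\mu_l$ the canonical Kirillov measure, which is the Liouville measure $\omega^k/k!$ normalised with the $2\pi$ conventions of $\chi_l$. Then Theorem~\ref{nilpLiecharcter} turns the inner integral into $|\mathrm{Pf}(l)|\operatorname{Tr}\pi_l(\phi)$, which is \eqref{B2}.

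I would do this comparison at the point $l\in V_T$. The tangent map of $X\mapsto \mathrm{Ad}^*(\exp X)l$ sends $X_{i_j}$ to $\mathrm{ad}^*(X_{i_j})l$. Projecting onto $V_S$ via the coordinates of Theorem~\ref{corwin318}(v), which say $P_{j_i}=u_i$, gives the matrix $\bigl(l([X_{i_j},X_{i_m}])\bigr)=B$, up to triangular terms. Its determinant is $\det B=\mathrm{Pf}(l)^2$. The Liouville form evaluated on the same vectors gives $|\mathrm{Pf}(l)|$. The ratio is therefore $|\mathrm{Pf}(l)|$ at $l$, and invariance of both measures under $G$ propagates this along the orbit, since $\nu_l$ is $G$-invariant by uniqueness of invariant measures up to scalar. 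Hence the two measures differ by exactly the constant $|\mathrm{Pf}(l)|$.

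\emph{Expected obstacle.} The main difficulty is Step~3: fixing the normalisations. These include the $2\pi$ factors, the unit-cube normalisation of $dl$ relative to the dual measure $d\xi$, and the triangularity that makes the Jacobian of the projection of $\mathcal O_l$ onto $V_S$ agree with the Pfaffian rather than with some other minor. I would check this first on the Heisenberg case, where the answer must reproduce the weight $|h|\,dh$ of Theorem~\ref{Fourierinversionheisenberglie}, and then argue the general case by the triangular structure in Theorem~\ref{corwin316}(iii)--(v).

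Absolute convergence of the final integral follows from Fubini, because $\hat\phi\in\mathcal S(\mathfrak g^*)$.
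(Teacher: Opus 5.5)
The paper gives no proof of its own here; it quotes Corwin--Greenleaf. Your route is the standard one: Euclidean inversion on $\mathfrak g^*$, the Jacobian-one fibration $\psi$ of $U$ over $U\cap V_T$, Kirillov's formula, and a Pfaffian density on each orbit. Steps~1--2 are fine, but Step~3, which carries all the content, has a gap as written.

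\textbf{Invariance of $\nu_l$.} You claim $\nu_l$ is $G$-invariant ``by uniqueness of invariant measures up to scalar''. This is circular, since uniqueness only compares measures already known to be invariant. If you repeat your pointwise computation at an arbitrary $\xi\in\mathcal O_l$, you get $d\nu_l=|\mathrm{Pf}(\xi)|\,d\mu_l$ at $\xi$. So what you actually need is that $|\mathrm{Pf}|$ is constant on orbits, i.e.\ that $\nu_l$ is invariant, and this must be proved. It does follow from the triangular structure you mention at the end:
\begin{itemize}
\item Use the Jordan--H\"older basis $\{l_1,\dots,l_n\}$, not $\{l_n,\dots,l_1\}$; the stable subspaces are $\mathfrak g_j^\perp=\mathrm{span}\{l_{j+1},\dots,l_n\}$. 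Then each $\operatorname{ad}^*(X)$ is strictly lower triangular.
\item By Theorem~\ref{corwin318}(iii)--(v), the coordinates of index $<j_r$ of the graph point $s+P_l(s)$ depend only on $s_{j_1},\dots,s_{j_{r-1}}$. Here $P_l$ is the graph map of Theorem~\ref{corwin319}(iii).
\item Hence the vector field $s\mapsto p_S\bigl(\operatorname{ad}^*(X)(s+P_l(s))\bigr)$, which generates the flow of $\operatorname{Ad}^*(\exp tX)$ transported to $V_S$, has strictly triangular Jacobian. Its divergence is zero, so Lebesgue measure on $V_S$ is preserved.
\end{itemize}
Alternatively, disintegrate the $\operatorname{Ad}^*(G)$-invariant Lebesgue measure on $U$ through $\psi$ and use continuity in $l$. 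You should also record why $\det B\neq0$ on $U$. Since $p_S$ maps $\mathcal O_l$ diffeomorphically onto $V_S$, the functionals $l([\cdot,X_i])$, $i\in S$, are independent. So $\mathrm{span}\{X_i\}_{i\in S}$ is a complement of $\mathfrak r_l$, and $B_l$ is nondegenerate on it.

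\textbf{Normalization of $\mu_l$.} Theorem~\ref{nilpLiecharcter} as quoted only asserts that \emph{some} invariant measure works. From it alone your argument gives the formula only up to an undetermined factor $c(l)$, and a Heisenberg check cannot fix $c(l)$ in general. You need to cite the stronger form of Kirillov's theorem from Corwin--Greenleaf, under these conventions:
\begin{itemize}
\item Haar measure is the image under $\exp$ of the unit-cube measure $dX$;
\item $\hat\phi$ and $\chi_l$ both use $e^{2\pi i l(\cdot)}$.
\end{itemize}
Then the character measure is exactly $|\omega^k/k!|$, where $\omega_l(\operatorname{ad}^*(X)l,\operatorname{ad}^*(Y)l)=l([X,Y])$, with no $2\pi$ factors. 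For $\mathrm{Heis}_3$ this is $|\lambda|^{-1}$ times Lebesgue measure on $\lambda Z^*+\mathfrak z^\perp$, consistent with the weight $|h|\,dh$. Granting this, your computation is exact and needs no triangular corrections. The $l_{i_m}$-coordinate of $p_S(\operatorname{ad}^*(X_{i_j})l)$ is $-l([X_{i_j},X_{i_m}])$, so the projected matrix is $-B$ on the nose. The Liouville form on the same vectors gives $|\mathrm{Pf}(B)|$, hence $\nu_l=|\mathrm{Pf}(l)|\,\mu_l$. Together with Steps~1--2 (the unit-cube measure on $\mathfrak g^*$ factoring as $dl\,ds$), this closes the proof.
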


\begin{remark}
In the above theorem,
the existence of a strong Malcev basis
is assumed for simplicity.
When such a basis is not available,
there are two standard ways to proceed.

First, one may work with a suitably chosen weak Malcev basis
and adjust the polarization at each step,
as in the inductive procedure described
in Subsection~\ref{choicepolarization}.
This allows one to carry through the construction
at the cost of additional technical complexity.

Alternatively, one may fix a more rigid structure
from the outset by passing to a stratified model
of the Lie algebra
and choosing a canonical polarization,
for instance via Vergne's construction
based on a chain of ideals~\cite{Vergne}.
This approach avoids the step--by--step modification
of polarizations and provides a uniform framework
for subsequent analytic constructions,
such as the definition of canonical hypo--elliptic operators
(see Subsection~\ref{constracthypoelliptic}).

Both approaches ultimately lead to the same representation--theoretic objects,
but the latter has the advantage of being globally well adapted
to the analytic structures developed in the paper.
\end{remark}

\subsection{Three examples} 


For nilpotent Lie group $G$, the exponential map $\exp: \mathfrak{g} \to G$ is a diffeomorphism, therefore we can define $X \ast Y \in \mathfrak{g}$ for all $X,Y \in \mathfrak{g}$ by the relation
\[\exp(X\ast Y) = \exp X\cdot\exp Y
\]
If $\mathfrak{g}$ is equipped with coordinates associated with a linear basis, the corresponding coordinates in $G$ will be called exponential coordinates, or canonical coordinates of the first kind.

\subsubsection[(2n+1)-dimensional real Heisenberg group Hn]{$(2n+1)$-dimensional real Heisenberg group $H_n$}\label{2n+1dimensionalHeisenberg}
We summarize Examples 1.1.2, 1.2.4, 1.3.9, 2.2.6, 4.3.11 in \cite{CorwinGreenleaf}. Note that $H_n$ in \cite{CorwinGreenleaf} is the same as the Heisenberg-Lie group $\mbox{Heis}_n(\mathbb{R})$ in the notation in the body of this paper.
 
\paragraph{Definition}
We define $\mathfrak{h}_n$, the $(2n+ 1)$-dimensional Heisenberg algebra,
to be the Lie algebra with basis $\{Z, Y_1,\ldots, Y_n, X_1,\ldots, X_n\}$, whose pairwise Lie brackets are equal to zero except for
$[X_i,Y_j]=\delta_{ij}Z$, $1\leq i,j\leq n$. It is a two-step nilpotent Lie algebra. One way to realize it as matrix algebra is to let $zZ + \sum_{i=1}^n (x_iX_i +y_iY_i)$ correspond to the $(n + 2) \times (n + 2)$ matrix
\[ \left(\begin{array}{ccccc} 0 & x_1 & \ldots & x_n & z \\ 
0 &  0   &  \ldots   &  0   & y_1 \\				
{}  &  {}   &  {}     &  {}   & \vdots \\
0 & {}	&   {}     &  0  & y_n \\
0 & 0	&   \ldots  &  0  & 0
\end{array}\right)
\]				
Note that $\{Z, Y, X\} = \{Z, Y_1,\ldots, Y_n, X_1,\ldots, X_n\}$ is a strong Malcev basis for $\mathfrak{h}_n$ through $\mathfrak{g}_1, \mathfrak{g}_2, \mathfrak{g}_3$ 
where $\mathfrak{g}_1$ is the center of $\mathfrak{h}_n$, which is generated by $Z$, $\mathfrak{g}_2$ is generated by $Z,Y_1,\ldots, Y_n$ and $\mathfrak{g}_3 = \mathfrak{h}_n$. These are ideals of $\mathfrak{h}_n$.

We denote a typical element of $\mathfrak{h}_n$ by $zZ + \sum_{i=1}^n (x_iX_i+y_iY_i) = (z,y,x)$, with $z\in \mathbb{R}$ and $x, y \in \mathbb{R}^n$. Using exponential coordinates for $H_n$, we get
\[
(z,y,x)\ast(z',y',x')
=
\left(z+z'+\frac12(x\cdot y'-y\cdot x'),\ y+y',\ x+x'\right).
\]
where $x\cdot y$ is the usual inner product on $\mathbb{R}^n$. Similarly,
\[
({\rm Ad}(\exp(z, y, x)))(z', y', x')=(z' + x\cdot y' -y\cdot x',y',x').
\]
If we can use the matrix representation of $\mathfrak{h}_n$, then
\begin{equation}
\exp(z, y, x) = \left(\begin{array}{ccccc}
1 & x_1 & \ldots & x_n & z^\ast \\ 
0  & 1  & \ldots &  0   & y_1 \\				
  &     &.       &     & \vdots \\
0 &	&        &  1   & y_n \\
0 & 0	&  \ldots &  0  & 1
\end{array}\right) =: [z^\ast,y,x] \quad {\rm where}\quad
 z^\ast = z + \frac12 x\cdot y. \label{expnentiltomatrix}
\end{equation}

\paragraph{Coadjoint orbits} 
Next, we consider coadjoint orbits in the dual space $\mathfrak{h}_n^{\ast}$ and associated irreducible unitary representations of $H_n$. We denote by $\{Z^\ast, Y_1^\ast,\ldots, Y_n^\ast, X_1^\ast,\ldots, X_n^\ast\}$ the dual basis of $\{Z, Y_1,\ldots, Y_n, X_1,\ldots, X_n\}$. An element $l \in \mathfrak{g}^\ast$ is written as 
\[ l = l_{\alpha,\beta,\gamma} := \gamma Z^\ast + \sum_{i=1}^n (\beta_i Y_i^\ast +\alpha_i X_i^\ast)
\]
For $w = (z,y,x) \in H_n$ and $W = cZ + \sum_{i=1}^n b_iY_i + a_iX_i \in \mathfrak{h}_n$, we can compute as 
\[ {\rm Ad}^\ast(w)(l_{\alpha,\beta,\gamma})(W) = {\rm Ad}^\ast(w)(l)(W) =  l({\rm Ad}(w^{-1})W) = l_{\alpha+\gamma y,\beta-\gamma x, \gamma}  
\]
Thus, coadjoint orbits are classified into the following two categories:
\begin{description}
\item[{\rm (i)}]($2n$)-dimensional orbits with $\gamma \neq 0$)
\[ \gamma Z^\ast + \mathfrak{z}^{\perp} =\{l_{\alpha+\gamma y,\beta-\gamma x, \gamma}| x,y \in \mathbb{R}^n\}
\]
where $\mathfrak{z}^{\perp} = \{l \in \mathfrak{g}^\ast | l(Z) = 0\} = \mathbb{R}Y^\ast + \mathbb{R}X^\ast$. 

\item[{\rm (ii)}]($0$-dimensional orbits with $\gamma = 0$) Each $0$-dimensional orbit is a point in $\mathfrak{z}^{\perp}$.
\end{description}

\paragraph{Irreducible representations associated with coadjoint orbits}
For the point orbits, $\mathfrak{g} = \mathfrak{h}_n$ is the radical $\mathfrak{r}_l$ and the only polarizing subalgebra $\mathfrak{m}$. If $l(Z) = \gamma \neq 0$, then $\mathfrak{r}_l = \mathbb{R}Z$ and there are many polarizing subalgebras $\mathfrak{m}$; for example 
\[ \mathfrak{m} = \mathbb{R}Z\oplus\mathbb{R}\mbox{-span}\{Y_1,\ldots, Y_n\} \quad {\rm or}\quad \mathfrak{m} = \mathbb{R}Z\oplus\mathbb{R}\mbox{-span}\{X_1,\ldots, X_n\} 
\]
Since the unitary equivalence class of $\pi_{l,M}$ is independent of the choice of orbit representative $l$ and polarizing subalgebra $\mathfrak{m}$, it is convenient to take an element $l \in \gamma Z^\ast+ \mathfrak{z}^{\perp}$ as $l=\lambda Z^\ast (\lambda = \gamma \neq 0)$ and $\mathfrak{m}$ as above. 
Then
\[ \chi_l\left(\exp\left(zZ+\sum_{j=1}^n y_jY_j\right)\right) =e^{2\pi\sqrt{-1}\lambda z}
\]
defines a character on the normal subgroup $M = \exp \mathfrak{m}$ of $G$, which induces to a representation $\pi_l = \mbox{Ind}_M^G(\chi_l)$ on $G$. A description of $\pi_l$ as an action on $L^2(M\backslash G, \mathbb{C})$ is more enlightening than the standard model as an action on functions on $G$ varying like $\chi_l$ along $M$-cosets. The calculation of this new model is typical of induced representations and will often arise. Let
\[
A= \Big\{\exp\left(\sum_{j=1}^n t_jX_j\right)\Big| t_j \in \mathbb{R}\Big\}.
\]
Then $A$ is a transversal for $M\backslash G$ since $M$ is normal, and Lebesgue measure $dt = dt_1\ldots dt_n$ transfers to  a  right-invariant measure 
$d\dot{g}$ on $M\backslash G$. Using $dt$ and $d\dot{g}$ to define $L^2$-norms, and identifying $\mathbb{R}^n$ with $A$ in the obvious way, 
the restriction map $f \to f|A$ becomes  an  isometry  from  the representation space $\mathcal{H}_{\pi_l}$ of $\pi_l$ to $L^2(\mathbb{R}^n)$. To compute the action of $\pi_l$ in this $L^2(\mathbb{R}^n)$ model,
the fundamental problem in computing this action is to split products $(0, 0, t)(z, y, x)$ in the form $h\cdot (0, 0, t')$ where $h \in M$. From the multiplication law
\[ (z,y,x)\cdot (z',y',x') = \left(z+z'-\frac{y\cdot x'}{2}+\frac{x\cdot y'}{2}, y+y',x+x'\right),
\]
we see that
\begin{equation}
(0,0,t)\cdot (z,y,x) = \left(z+\frac{t\cdot y}{2}, y,x+t\right) =  \left(z+ t\cdot y+\frac{x\cdot y}{2}, y,0\right)(0,0,t+x).  \label{exponetialmultiplication}
\end{equation}
Hence if $f\in\mathcal{H}_{\pi_l}$, then
\begin{align*}
\pi_l(z,y,x)f(0,0,t)
&=f\left(\left(z+t\cdot y+\frac{x\cdot y}{2},y,0\right)
          \cdot(0,0,t+x)\right)\\
&=e^{2\pi\sqrt{-1}\lambda(z+t\cdot y+(1/2)x\cdot y)}
  f(0,0,t+x).
\end{align*}
Thus the action on $\tilde{f} \in L^2(\mathbb{R}^n)$ is given by
\begin{equation}
\pi_l(z, y, x)\tilde{f}(t)= e^{2\pi\sqrt{-1}\lambda (z+t\cdot y+(1/2)x\cdot y)}\tilde{f}(t+x) \quad (\lambda \neq 0).\label{2n+1heisenbergirred}
\end{equation}
Note again that this is different from the formula (\ref{schrep}) in the case when $n=1$ for the reason explained there. In fact, in the matrix expression (\ref{expnentiltomatrix}), equality (\ref{exponetialmultiplication}) is changed to 
\begin{equation}
[0,0,t]\cdot [z^\ast,y,x] = \left[z^\ast+t\cdot y, y,x+t\right] =  \left[z^\ast+ t\cdot y, y,0\right][0,0,t+x]. \label{matrixmultiplication}
\end{equation}
Kirillov's theory says that every irreducible representation of $H_n$ is obtained in this way. 

\paragraph{Fourier inversion formula}
Finally, we recall the Fourier inversion formula. 
Recall that Malcev basis $\{Z, Y_1,\ldots, Y_n, X_1,\ldots,X_n\}$ satisfies
\[  [X_i,Y_j] = \delta_{ij}Z,\quad 1 \leq i,j \leq n.
\] 
If $l_1,\ldots,l_{2n+1}$ is the basis of $\mathfrak{g}^\ast$, the index set partitions as
\[
S=\{2,3,\ldots,2n+1\},\qquad T=\{1\}.
\]
Hence $V_T=\mathbb{R}l_1$ is a cross-section for the generic orbits
$U=\{l:l(Z)\neq0\}$, and the Pfaffian satisfies
\[
|\operatorname{Pf}(l)|=|l(Z)|^n.
\]
Identifying $V_T=\{\alpha l_1:\alpha\in\mathbb{R}\}$ and using the normalized Lebesgue measure $d\alpha$, Fourier inversion becomes
\begin{equation}
\phi(\sigma)
=
\int_{\mathbb{R}}
\operatorname{Tr}\!\left(
\pi_{\alpha l_1}(\sigma^{-1})\pi_{\alpha l_1}(\phi)
\right)
|\alpha|^n\,d\alpha,
\qquad \phi\in\mathcal{S}(H_n).
\label{2n+1heisenbergplancherel}
\end{equation}

\paragraph{Associated hypo--elliptic operator}

Let $\Gamma=\mathrm{Heis}_{2n+1}(\mathbb Z)$ and
$G=\mathrm{Heis}_{2n+1}(\mathbb R)$.  For a generic coadjoint orbit, the
associated irreducible unitary representation is realized on
$L^2(\mathbb R^n)$, and the canonical hypo--elliptic operator is
\[
\mathcal H
=
\sum_{j=1}^n
\left(
-\frac{\partial^2}{\partial t_j^2}+4\pi^2t_j^2
\right),
\]
which is the standard $n$--dimensional harmonic oscillator.

\subsubsection[Engel group E4]{Engel group $E_4$}
We summarize Examples 1.1.3, 1.2.5, 1.3.10, 2.2.7, 4.3.12  in \cite{CorwinGreenleaf}.
\paragraph{Definition}
We define the Engel Lie algebra $\mathfrak{e}_4$ to be the $4$-dimensional Lie algebra spanned by $W,X,Y,Z$ with
\[
[W,X]=Y,\qquad [W,Y]=Z,\qquad
[X,Y]=[W,Z]=[X,Z]=[Y,Z]=0.
\]
This one is a $3$ step nilpotent Lie algebra. One realization of matrix algebra is obtained by letting $wW+xX+yY+zZ$ correspond to the $4 \times 4$ matrix.

\[ 
\left(\begin{array}{cccc}
0 & w & 0 & z  \\ 
0 & 0 & w & y  \\				
0 & 0 & 0 & x  \\
0 & 0 & 0 & 0  \end{array}\right)
\]

\paragraph{Coadjoint orbits}
The basis $\{Z,Y,X,W\}$ is a strong Malcev, but we shall use the exponential coordinates on $E_4:= \exp \mathfrak{e}_4$,
\[
(z,y,x,w) = \exp(zZ + xX+ yY+ wW)
\]
below. The Campbell-Baker-Hausdorff formula yields the multiplication law in these coordinates:
\begin{align}
&{}(z,y,x,w)\ast(z',y',x',w') \notag\\
&=\Bigl(
 z+z'+\frac12(wy'-yw')
   +\frac1{12}(wx'-xw')(w-w'),\notag\\
&\hspace{31mm}
 y+y'+\frac12(wx'-xw'),\ x+x',\ w+w'
 \Bigr).
\label{engelmultiplication}
\end{align}
The adjoint action is given by
\[
\operatorname{Ad}_{(z,y,x,w)}(z',y',x',w')
=
\Bigl(
 z'+wy'-yw'+\frac12 w(wx'-xw'),
 y'+wx'-xw',\ x',\ w'
\Bigr).
\]

Relative to the given basis in $\mathfrak{e}_4$, let $\{Z^\ast=l_1,Y^\ast=l_2,X^\ast=l_3,W^\ast=l_4\}$ be the dual basis of $\mathfrak{e}_4^\ast$.  If $l= \delta l_1 + \gamma l_2 + \beta l_3 + \alpha l_4$, the coadjoint orbits and convenient representatives are
\begin{description}
\item[{\rm (i)}]  $\mathcal{O}_{\delta,\beta} = {\rm Ad}^\ast (G)(\delta l_1 +  \beta l_3) = \{ \delta l_1 + tl_2 + ( \beta + t^2/2\delta )l_3+ sl_4 | s, t \in \mathbb{R} \}$, if $\delta \neq 0$,
\item[{\rm (ii)}] $\mathcal{O}_\gamma = {\rm Ad}^\ast(G)(\gamma l_2)= \gamma l_2+ \mathbb{R}l_3+ \mathbb{R}l_4$, where $\gamma \neq 0$,
\item[{\rm (iii)}] $\mathcal{O}_{\alpha,\beta} = {\rm Ad}^\ast (G)(\beta l_3 + \alpha l_4)= \{\beta l_3+ \alpha l_4\}$, where $\alpha, \beta \in \mathbb{R}$ (one point orbit)
\end{description}

\paragraph{Irreducible representations associated with generic orbits}
The generic orbits are parabolic cylinders $\mathcal{O}_{\delta,\beta}$ with $\delta \neq 0$. For the representatives $l_{\delta,\beta} = \delta l_1 + \beta l_3$, the radical is $\mathfrak{r}_l=\mathbb{R}Z+\mathbb{R}X$; hence every polarizing subalgebra $\mathfrak{m}$ is three-dimensional. 
Since $\mathfrak{m} = \mathbb{R}Z +\mathbb{R}Y+ \mathbb{R}X$ is an abelian ideal in $\mathfrak{e}_4$ of the correct dimension, it is a polarizing subalgebra for each $l_{\delta,\beta}$.

Write $(z,y,x,w) = \exp(zZ+yY+xX+wW)$. Obviously $M = \exp(\mathfrak{m}) = \{(z,y,x,0)| z, y, x \in \mathbb{R}\}$ and the character $\chi_{\delta,\beta} := \chi_{l_{\delta,\beta}}$ on $M$ is
\[
\chi_{\delta,\beta}(z,y,x,0)= e^{2\pi\sqrt{-1}(\beta x +\delta z)}
\]
Also $M\backslash G$ with $G=E_4$ has $\Sigma = \exp{\mathbb{R}W}$ as a cross section, which we identify with $\mathbb{R}$; 
Lebesgue measure $dt$ on $\mathbb{R}$ gives a right-invariant measure $d\dot{g}$ on $M\backslash G$.
Using these measures to define $L^2$-norms we have an isometry between the representation space $\mathcal{H}_{\delta,\beta}$ of $\pi_{\delta,\beta} = {\rm Ind}_M^G(\chi_{\delta,\beta})$ and $L^2(\mathbb{R})$. 
We compute the action of $\pi_{\delta,\beta}$ modeled in $L^2(\mathbb{R})$ from the multiplication law given in (\ref{engelmultiplication}); first note that
\begin{align*}
&{}(0, 0, 0, t)\cdot(z, y, x, w) \\
&{} \quad  =(z +\frac12 ty+ \frac{1}{12}[t^2 x - twx ], y+ \frac12 tx, x, t + w) \\
&{} \quad =(z + ty + \frac12 t^2x +\frac12 twx + \frac12 wy + \frac16 w^2x, y+ tx +\frac12 wx, x, 0)\cdot(0, 0, 0, t + w).
\end{align*}
Thus if $f \in \mathcal{H}_{\delta,\beta}$
\begin{align*}
&{}[\pi_{\delta,\beta}(z,y,x,w)f](0, 0, 0, t) = f((0, 0, 0, t)\cdot (z, y, x, w)) \\
&{}= e^{2\pi\sqrt{-1}\beta x}e^{2\pi\sqrt{-1}\delta(z+ty+(1/2)t^2x +(1/2)twx +(1/2)wy + (1/6)w^2x)}f(0,0,0,t+w)
\end{align*}
and the action  modeled  in $L^2(\mathbb{R})$ is
\begin{align}
&{}[\pi_{\delta,\beta}(z,y,x,w)\tilde{f}](t) \notag \\
&{}= e^{2\pi\sqrt{-1}\beta x}e^{2\pi\sqrt{-1}\delta(z+ty+(1/2)t^2x +(1/2)twx +(1/2)wy + (1/6)w^2x)}\tilde{f}(t+ w). \label{engelirred}	
\end{align}
This action is easier to visualize if one examines the action of individual one-parameter subgroups. For instance,
\begin{align*}
&{}[\pi_{\delta,\beta}(\exp xX)\tilde{f}](t) = e^{2\pi\sqrt{-1}\beta x}e^{2\pi\sqrt{-1}\delta(1/2)t^2x}\tilde{f}(t),\\
&{}  [\pi_{\delta,\beta}(\exp wW)\tilde{f}](t) = \tilde{f}(t+w).
\end{align*}

We omit the description of the other orbits and irreducible representations since they do not appear in the Fourier inversion formula below (cf. 2.2.7 Example in \cite{CorwinGreenleaf}).

\paragraph{Fourier inversion formula}
Recall that $\{l_1,l_2,l_3,l_4\}$ is the dual basis of the strong Malcev basis $\{Z,Y,X,W\}$.  Then
\[
S=\{2,4\},\qquad T=\{1,3\},
\]
and
\begin{align*}
\mbox{Generic orbits}\quad U
  &=\left\{\sum_{i=1}^4\alpha_i l_i:\alpha_1\neq0\right\},\\
\mbox{Cross section}\quad V_T\cap U
  &=\{\delta l_1+\beta l_3:\delta\neq0,\ \beta\in\mathbb R\}.
\end{align*}
The Pfaffian is a polynomial on $\mathfrak{g}^\ast$ satisfying
$\operatorname{Pf}(l)^2=l(Z)^2$; hence one may choose
$\operatorname{Pf}(l)=l(Z)$.  Identifying $V_T$ with $\mathbb R^2$ and using
normalized Lebesgue measure $d\delta\,d\beta$, the Fourier inversion formula is
\begin{equation}
\phi(\sigma)
=
\int_{\mathbb R^2}
\operatorname{Tr}\!\left(
\pi_{\delta,\beta}(\sigma^{-1})\pi_{\delta,\beta}(\phi)
\right)
|\delta|\,d\delta\,d\beta,
\qquad \phi\in\mathcal S(E_4).
\label{engelplancherel}
\end{equation}

\paragraph{Associated hypo--elliptic operator}

Let $E_4$ denote the Engel group.
For a generic representation $\pi_{\delta,\beta}$,
the corresponding hypo--elliptic operator
acting on $L^2(\mathbb R)$
is given by
\[
\mathcal H
=
-\frac{d^2}{dt^2} + t^4,
\]
a (modified) quartic oscillator.
While the spectrum is discrete,
closed formulas for the eigenvalues are not known.

\subsubsection[The group of 4 x 4 upper triangular matrices N4]{The group of $4\times4$ upper triangular matrices $N_4$}
We summarize Examples 1.1.4, 1.2.6, 1.3.11, 2.2.8, and 4.3.13 in \cite{CorwinGreenleaf}.

\paragraph{Definition}
Let $\mathfrak{g} = \mathfrak{n}_4$ be the Lie algebra of strictly upper triangular $4 \times 4$  matrices; it is a $3$ step nilpotent algebra, of dimension $6$, and its center is one-dimensional. A typical element $X$ can be written as
\begin{align} X &= (z,y_1,y_2,x_1,x_2,x_3)= zZ +y_1Y_1 +y_2Y_2+x_1X_1+x_2X_2+x_3X_3 \notag	\\
&= \left( \begin{array}{cccc}
0 & x_1 & y_1 & z \\
0 & 0   & x_2 & y_2 \\
0 & 0 & 0 & x_3 \\
0 & 0 & 0 & 0
\end{array}\right) \label{liealgebran4}
\end{align}
The calculation of $W_1 \ast W_2$ ($W_1, W_2 \in \mathfrak{g}$) will not be useful for us, and we omit it. We shall, however, compute ${\rm Ad}$:
\begin{align*}
&{}({\rm Ad}( \exp(z, y_1, y_2, x_1, x_2, x_3)))(z', y'_1, y'_2, x'_1, x'_2, x'_3) \\
&= (z'+x_1y'_2-x'_1y_2+y_1x'_3 -  y'_1x_3 + \frac12(x_1x_2x'_3-2x_1x'_2x_3 + x'_1x_2x_3),\\
&{}  y'_1+x_1x'_2 -x_2x'_1, y'_2 + x_2x'_3-x_3x'_2,x'_1,x'_2,x'_3 )
\end{align*}

In the above, $Z,\ldots, X_3$ is a basis of $\mathfrak{n}_4$ corresponding to the matrix entries. The nontrivial commutators are easily computed:
\[
[X_1,X_2] = Y_1 ,\; [X_3,X_2] = -Y_2,\;	[X_1,Y_2] = Z,\; [X_3,Y_1] = - Z.
\]
Since $\mathfrak{a}=\mathbb{R}\mbox{-span}\{Z, Y_1, Y_2, X_2\}$ is the largest abelian ideal in $\mathfrak{n}_4$, $\{Z,Y_1,Y_2,X_2,X_1,X_3\}$ is a strong Malcev basis for $\mathfrak{n}_4$.  

Similarly as (\ref{liealgebran4}), elements $l$ of $\mathfrak{n}_4^\ast$
are written as 
\begin{align} l &= (\alpha,\beta_1,\beta_2,\gamma_1,\gamma_2,\gamma_3) \notag \\ &= \alpha l_1+\beta_1l_2+\beta_2l_3+\gamma_1l_4+\gamma_2l_5+\gamma_3l_6 \notag \\
&= \left( \begin{array}{cccc}
0 & 0 & 0 & 0 \\
\gamma_1 & 0 & 0 & 0 \\
\beta_1 & \gamma_2 & 0 & 0 \\
\alpha & \beta_2 & \gamma_3 & 0
\end{array}\right) \label{duallien4} 
\end{align}
where $\{l_1,\ldots, l_6\}$ is the dual basis of $\{Z,\ldots, X_3\}$.

\paragraph{Coadjoint orbits}
In matrix form, the duality is given by 
\[l(X) = \alpha z + \cdots + \gamma_3x_3.\]
 By the computation of the coadjoint action of $N_4$ on $\mathfrak{n}_4$ in Example 1.3.11 in \cite{CorwinGreenleaf}, we see that the orbits have representatives and parametrizations as follows.
\begin{align*} {\rm (i)}\quad \mathcal{O}_{\alpha',\gamma'_2} &= {\rm Ad}^\ast G(\alpha', 0, 0, 0, \gamma'_2, 0) \\
&= \left\{\left.\left(\alpha', t_1, t_2, s_1, \gamma'_2 + \frac{t_1t_2}{\alpha'},s_2\right)\right| t_1,t_2,s_1,s_2 \in \mathbb{R}\right\} \\
&= \{l| \alpha = \alpha',\alpha\gamma'_2 - \beta_1\beta_2 = \alpha'\gamma_2\}
\end{align*}
where $\alpha' \neq 0$, $\gamma'_2 \in \mathbb{R}$. (These are the orbits of $l$ such that $\alpha \neq 0$).

\begin{align*}{\rm (ii)}\quad \mathcal{O}_{\beta'_1,\beta'_2,\gamma'_3} &=
\mbox{Ad}^\ast G(0, \beta'_1, \beta'_2,0, 0,\gamma'_3) \\
&= \left\{\left.\left(0,\beta'_1,\beta'_2,t_1,t_2, \frac{\beta'_1\gamma'_3-t_1\beta'_2}{\beta'_1}\right) \right| t_1,t_2 \in \mathbb{R}\right\} \\
&= \{l| \alpha=0, \beta_1= \beta'_1, \beta_2 = \beta'_2,\gamma_1\beta_2 +\gamma_3\beta_1 = \beta'_1\gamma'_3 \}
\end{align*}
where $\beta'_1 \neq 0$, $\beta'_2, \gamma'_3 \in \mathbb{R}$. (These are the orbits of $l$ such that $\alpha=0, \beta_1 \neq 0$.) 

\begin{align*}{\rm (iii)}\quad \mathcal{O}_{\beta'_2,\gamma'_1} &=
\mbox{Ad}^\ast G(0, 0, \beta'_2,\gamma'_1, 0,0) \\
&= \{(0,0,\beta'_2,\gamma'_1,t_1,t_2) | t_1,t_2 \in \mathbb{R}\} \\
&= \{l|\alpha= \beta_1 = 0, \beta_2 = \beta'_2, \gamma_1 = \gamma'_1\}
\end{align*}
where $\beta'_2 \neq 0$, $\gamma'_1 \in \mathbb{R}$. (These are the orbits of $l$ such that $\alpha= \beta_1 = 0, \beta_2 \neq 0$.)

\[{\rm (iv)}\quad \mathcal{O}_{\gamma'_1,\gamma'_2,\gamma'_3} =
\mbox{Ad}^\ast G(0, 0, 0,\gamma'_1, \gamma'_2, \gamma'_3) = \{(0, 0, 0,\gamma'_1, \gamma'_2, \gamma'_3)\} 
\]
where $\gamma'_1,\gamma'_2,\gamma'_3 \in \mathbb{R}$. (These are the orbits of $l$ such that $\alpha= \beta_1 = \beta_2 = 0$.)
\paragraph{Irreducible representations associated with generic orbits}
Generic orbits in (i) are $4$-dimensional hypersurfaces; those in (ii) and (iii) are
$2$-dimensional planes, and those in (iv) are points. 
Taking the indicated orbit representatives $l=l_{\alpha',\gamma'_2}$, in Case (i), $\mathfrak{r}_l = \{(z, 0, 0, 0, x_2 , 0)| z, x_2 \in \mathbb{R}\}$ and the abelian ideal $\mathfrak{m} =\{(z, y_1, y_2, 0, x_2, 0)| z, y_1,y_2, x_2 \in \mathbb{R}\}$ is a polarizing subalgebra for each such $l$. 
Since $\mathfrak{m}$ is an ideal, we may use the exponential coordinates
$(z,\ldots, x_3) = \exp X$ in $G = N_4$ and obtain a cross-section for $M\backslash G $,
$\Sigma = \{(0,0,0,t_1,0,t_3)| t_1,t_3 \in \mathbb{R}\}$. The action of $\pi_{\alpha',\gamma'_2}$ can then  be modeled in $L^2(\mathbb{R}^2)$. 
The necessary splitting of the product, computed  using the Campbell-Hausdorff formula, is
\[
(0,0,0,t_1,0,t_3)\cdot (z,\ldots, x_3)=(z',y'_1, y'_2,0,x_2,0)\cdot (0,0,0,t_1+x_1,0,t_3 +x_3),
\]
where
\begin{align*}
z' &= z + \frac{x_1y_2}{2}-\frac{x_3y_1}{2} -t_1x_2t_3-\frac12x_1x_2t_3 -\frac12t_1x_2x_3+\frac13x_1x_2x_3 \\
y'_1 &= y_1 + t_1x_2+\frac{x_1x_2}{2} \\
y'_2 &= y_2 - t_3x_2+\frac{x_2x_3}{2}.
\end{align*}
Thus, the action on $\tilde{f} \in L^2(\mathbb{R}^2)$ is
\begin{align}
&{}[\pi_{\alpha',\gamma'_2}\tilde{f}](t_1,t_3) \notag \\
&= e^{2\pi\sqrt{-1}\gamma'_2x_2}e^{2\pi\sqrt{-1}\alpha'[z + (1/2)x_1y_2 -x_3y_1 -t_1x_2t_3-(1/2)x_1x_2t_3 -(1/2)t_1x_2x_3+(1/3)x_1x_2x_3]} \notag \\
&{} \cdot\tilde{f}(t_1+x_1,t_3 +x_3). \label{4upperirred}
\end{align}
The above computation is enough for our later arguments, and thus, we proceed to the next step. 
(The calculations of the nongeneric representations are left to the reader in Example 2.2.8. in \cite{CorwinGreenleaf}).

If we choose the dual basis in (\ref{duallien4}), we have
\[ 
S=\{2,3,5,6\},\quad T=\{1,4\}
\]
and
\begin{align*}
\mbox{Generic orbits}\quad  U &= \{\alpha l_1+\beta_1l_2+\beta_2l_3+\gamma_1l_4+\gamma_2l_5+\gamma_3l_6 | \alpha \neq 0 \},\\
\mbox{Cross section} V_T \cap U &= \{\alpha l_1 + \gamma_1l_4 | \alpha \neq 0 \}.
\end{align*}

The Pfaffian $\mbox{Pf}(l)$ is expressed as 
\[
\mbox{Pf}(l)^2 = \mbox{det} B_l(X_{i_j}, X_{i_k}) = \mbox{det}\left(\begin{array}{cccc}0 & 0 & 0 & l(Z) \\
0 & 0 & l(Z) & 0 \\
0 & l(Z) & 0 & 0 \\
l(Z) & 0 & 0 & 0
\end{array}\right)
\]

\paragraph{Fourier inversion formula}
Identifying $V_T\cong\mathbb R^2$ and using normalized Lebesgue measure
$d\alpha\,d\gamma_1$, the Fourier inversion and Plancherel formulas are
\begin{align}
\phi(\sigma)
&=
\int_{\mathbb R^2}
\operatorname{Tr}\!\left(
\pi_{\alpha,\gamma_1}(\sigma^{-1})
\pi_{\alpha,\gamma_1}(\phi)
\right)
|\alpha|^2\,d\alpha\,d\gamma_1,
\qquad \phi\in\mathcal S(N_4),
\notag\\
\|f\|_2^2
&=
\int_{\mathbb R^2}
\|\pi_{\alpha,\gamma_1}(f)\|_{\rm HS}^2
|\alpha|^2\,d\alpha\,d\gamma_1,
\qquad f\in\mathcal S(N_4).
\label{4upperplancherel}
\end{align}
Here $\|\cdot\|_{\rm HS}$ denotes the Hilbert--Schmidt norm.

\paragraph{Associated hypo--elliptic operator}

For the nilpotent group of $4\times4$ upper triangular matrices,
the representation space is $L^2(\mathbb R^2)$,
and the canonical operator becomes
\[
\mathcal H
=
-\left(
\frac{\partial^2}{\partial t_1^2}
+
\frac{\partial^2}{\partial t_3^2}
\right)
+
4\pi^2 (t_1 t_3)^2.
\]
This operator provides an example of a
polynomial Schr\"odinger operator
arising from a step--three nilpotent group.

\section{Remarks on trace class issues and formal manipulations}
\label{AppendixD}

In the passage from discrete groups to their Malcev completions one
frequently encounters operators that are formally defined on
infinite-dimensional representation spaces but are not trace class.  This
phenomenon already appears for the Heisenberg group.  If a function on
$\Gamma$ is extended to a function on $G$, the operator $\pi_l(f)$ in a
Kirillov representation may be compact without being trace class.

For this reason the trace formulas in the lattice theory should not be read
as ordinary traces of infinite-dimensional Lie-group operators unless the
required trace-class hypotheses are verified separately.  The role of the
finite-dimensional refinement is to replace such formal traces, at the
lattice level, by genuine matrix traces.  The Pytlik-type formula in
Theorem~\ref{newPytlik} is therefore a finite-dimensional normalized-trace
identity supported by a finitely additive measure.  It is not a Hilbert-space
Plancherel decomposition of the full non--type~I dual.

This distinction is also useful analytically.  Lie-group representations
provide smooth differential models, such as the harmonic oscillator in the
Heisenberg case and higher polynomial Schr\"odinger operators for more
general nilpotent groups.  The lattice theorem explains how these models are
related to exact finite-dimensional fibers.  Approximation or asymptotic
analysis enters only after this exact representation-theoretic comparison
has been made.

\section{Finite quotients and the finitely additive inversion functional}
\label{finiteadditiveappendix}

We give the finite-quotient construction behind Theorem~\ref{newPytlik} in
some detail.  The construction is elementary, but it is the cleanest way to
separate three notions: finite-group Fourier inversion, the ultralimit which
produces a finitely additive measure, and the finite-dimensional representation-theoretic description of
particular finite-dimensional fibers.

\begin{remark}[Why the finite-quotient model is recorded]
The main text introduces the Pytlik-type functional geometrically on the
rational finite-dimensional skeleton, in the spirit of Pytlik's original construction.  The
geometric content is defined by starting with ordinary regular sets in the
ambient parameter torus: if \(D\subset T\) is the rational finite-dimensional skeleton and
\(B\subset T\) is a finite union of half-open cubes, or more generally a
Jordan set with Lebesgue-null boundary, then the content of \(D\cap B\) is
\(\operatorname{Leb}_T(B)\).  This cube-first convention is finitely additive
on the corresponding algebra of regular rational sets and is the natural
generalization of the Heisenberg Pytlik measure.

The finite-quotient construction below is recorded for a different reason.
Starting from ordinary Fourier inversion on finite quotients automatically
supplies positivity, total mass one, the noncommutative Plancherel weights
\((\dim\rho)^2/|Q|\), and the normalized trace identity.  It also extends the
coefficient functional to all bounded observables needed for the inversion
argument.

Thus the finite-quotient ultralimit is not meant to replace the geometric
intuition of the rational finite-dimensional skeleton.  It gives another
exact realization of the coefficient functional on the algebra used in the
inversion formula.  No literal equality with the geometric cube content as a
finitely additive set function is asserted, and no canonicity with respect to
the residual chain or free ultrafilter is claimed.  What is common to the two
models is the inversion value on the relevant coefficient functions and the
normalized-trace convention.
\end{remark}

\subsection{Residual finiteness}

A finitely generated nilpotent group is residually finite.  Hence one can
choose a decreasing sequence of normal finite-index subgroups
\[
\Gamma=N_0\supset N_1\supset N_2\supset\cdots,
\qquad
\bigcap_{j=0}^{\infty}N_j=\{e\}.
\]
Put $Q_j=\Gamma/N_j$ and let $q_j:\Gamma\to Q_j$ be the quotient map.  If
$f\in\mathbb C[\Gamma]$ has finite support and $\sigma\in\Gamma$ is fixed,
then for all sufficiently large $j$ the quotient map separates all elements
needed to compute the coefficient $f(\sigma)$.  Consequently that coefficient
can be recovered from the image of $f$ in the finite group algebra
$\mathbb C[Q_j]$.

\subsection{Finite-group Fourier inversion}

For a finite group $Q$ and $F\in\mathbb C[Q]$, standard matrix-coefficient
orthogonality gives
\[
F(y)
=
\sum_{\rho\in\widehat Q}
\frac{\dim\rho}{|Q|}
\mathrm{Tr}\bigl(\rho(y^{-1})\rho(F)\bigr).
\]
Equivalently, with the probability weights
\[
m_Q(\rho)=\frac{(\dim\rho)^2}{|Q|},
\]
this becomes
\[
F(y)
=
\sum_{\rho\in\widehat Q}
\frac1{\dim\rho}
\mathrm{Tr}\bigl(\rho(y^{-1})\rho(F)\bigr)m_Q(\rho).
\]
All sums here are finite and all traces are ordinary matrix traces.

Pull the representations of $Q_j$ back to $\Gamma$.  For a bounded function
$F$ on $\widehat\Gamma_{\mathrm{fin}}$, set
\[
\Lambda_j(F)
=
\sum_{\rho\in\widehat{Q_j}}F(\rho)\frac{(\dim\rho)^2}{|Q_j|}.
\]
The functionals $\Lambda_j$ are positive and have norm one.  If
$\mathcal U$ is a free ultrafilter on $\mathbb N$, define
\[
\Lambda(F)=\lim_{j\to\mathcal U}\Lambda_j(F).
\]
Then $\Lambda$ is again a positive norm-one functional.  Equivalently, it is
integration against a positive finitely additive probability measure on the
Boolean algebra of subsets of $\widehat\Gamma_{\mathrm{fin}}$.

\subsection{Exactness of the ultralimit identity}

Let $f\in\mathbb C[\Gamma]$ and $\sigma\in\Gamma$.  For all sufficiently
large $j$, finite-group inversion on $Q_j$ gives exactly
\[
f(\sigma)
=
\sum_{\rho\in\widehat{Q_j}}
\frac1{\dim\rho}
\mathrm{Tr}\bigl(\rho(\sigma^{-1})\rho(f)\bigr)
\frac{(\dim\rho)^2}{|Q_j|}.
\]
Taking the ultralimit gives
\[
f(\sigma)
=
\Lambda\left(
\rho\mapsto
\frac1{\dim\rho}
\mathrm{Tr}\bigl(\rho(\sigma^{-1})\rho(f)\bigr)
\right).
\]
If $f\in\ell^1(\Gamma)$, approximate it by finitely supported functions.  The
estimate
\[
\left|\frac1{\dim\rho}
\mathrm{Tr}\bigl(\rho(\sigma^{-1})\rho(g)\bigr)\right|
\leq \|g\|_1
\]
passes the identity to $\ell^1(\Gamma)$.

\subsection{Large height and finite-dimensional parameters}

Define the height of a finite-dimensional representation pulled back from
this tower to be the least $j$ through which it factors.  For fixed $J$, the
representations of height at most $J$ come from the fixed finite group
$Q_J$.  Their total Plancherel weight inside $Q_j$ is bounded by
$|Q_J|/|Q_j|$, which tends to zero.  Thus every bounded-height subset has
finitely additive measure zero.

For the Heisenberg lattice this recovers the familiar large-denominator
feature of Pytlik's measure: rational central characters with bounded
denominator have measure zero.  For general nilpotent lattices, height is the
corresponding invariant.  The classification theorem of Howe identifies the finite-dimensional fibers
arising from rational coadjoint data; the finite-quotient construction gives
the finitely additive averaging functional for their normalized traces.  This
appendix records only the trace-functional convention.  Quantitative analytic
comparison at large denominator is not proved here; its required topology and
class of observables must be specified separately, as explained in
Remarks~\ref{remark34} and in the large-denominator remark following
Theorem~\ref{newPytlik}.


\address{ 
Research and Education Center for Natural Science \\
Hiyoshi Campus, Keio University \\
4-1-1, Hiyoshi, Kohoku-ku \\
 Yokohama 223-8521, Japan \\
\smallskip \\
Osaka Central Advanced Mathematical Institute (OCAMI) \\
MEXT Joint Usage/Research Center on Mathematics and Theoretical Physics, \\
Osaka Metropolitan University \\
3-3-138 Sugimoto, Sumiyoshi-ku \\
Osaka 558-8585, Japan \\
\smallskip \\
Faculty of Mathematics \\
Kyushu University \\
744 Motooka, Nishi-ku,\\
Fukuoka 819-0395, Japan
}
{katsuda@math.kyushu-u.ac.jp}


\addcontentsline{toc}{section}{References}
\begin{thebibliography}{99}

%


\bibitem{Alexopoulos3}
\textsc{G. Alexopoulos},
Sub-Laplacians with drift on Lie groups of polynomial volume growth,
Mem. Amer. Math. Soc. 155 (2002), no. 739.

\bibitem{AvilaJitomirskaya}
A. Avila and S. Jitomirskaya, \emph{The Ten Martini Problem}, Ann. of Math. (2) \textbf{170} (2009), no. 1, 303--342.

\bibitem{BekkaDriutti}
B. Bekka and P. Driutti, \emph{Restrictions of irreducible unitary representations of nilpotent Lie groups to lattices}, J. Funct. Anal. \textbf{168} (1999), no. 2, 514--528.

\bibitem{BekkaPlancherel}
B. Bekka, \emph{The Plancherel formula for countable groups}, Indag. Math. (N.S.) \textbf{32} (2021), no. 3, 619--638.

\bibitem{ChoiElliottYui}
M. D. Choi, G. A. Elliott, and N. Yui, \emph{Gauss polynomials and the rotation algebra}, Invent. Math. \textbf{99} (1990), no. 2, 225--246.

\bibitem{Corwin2}
\textsc{L.~Corwin, F.~P.~Greenleaf and G.~Gr\'elaud},
Direct integral decompositions and multiplicities for induced representations
of nilpotent Lie groups,
\emph{Trans. Amer. Math. Soc.} \textbf{304} (1987), 549--583.

\bibitem{CorwinGreenleaf}
L. J. Corwin and F. P. Greenleaf, \emph{Representations of Nilpotent Lie Groups and Their Applications, Part I}, Cambridge Studies in Advanced Mathematics, vol. 18, Cambridge University Press, Cambridge, 1990.

\bibitem{Davidson}
\textsc{K. R. Davidson},
$C^\ast$-algebras by example, Fields Institute Monographs, 6. Amer. Math. Soc., 1996.

\bibitem{Fujiwara}
\textsc{H. Fujiwara and J. Ludwig},
Harmonic analysis on exponential solvable Lie groups, Springer Monographs in Mathematics. Springer, Tokyo, 2015. xii+465 pp.

\bibitem{FujiwaraNote}
H. Fujiwara, unpublished note communicated to the author, 2018.

\bibitem{FujiwaraPlancherel}
H. Fujiwara, \emph{La formule de Plancherel pour les repr\'esentations monomiales des groupes de Lie nilpotents}, in \emph{Representation Theory of Lie Groups and Lie Algebras} (Fuji--Kawaguchiko, 1990), World Scientific, Singapore, 1992, pp. 140--150.

\bibitem{Glimm}
\textsc{J.~Glimm},
Type~I $C^*$-algebras,
\emph{Ann. of Math.} (2) \textbf{73} (1961), 572--612.

\bibitem{Gruber}
\textsc{M.~J.~Gruber},
Noncommutative Bloch theory,
\emph{J. Math. Phys.} \textbf{42} (2001), no.~6, 2438--2465.

\bibitem{HelfferSjostrandI}
B. Helffer and J. Sj\"ostrand, \emph{Analyse semi-classique pour l'\'equation de Harper (avec application \`a l'\'equation de Schr\"odinger avec champ magn\'etique)}, M\'em. Soc. Math. France (N.S.) No. 34 (1988), 1--113.

\bibitem{HelfferSjostrandII}
B. Helffer and J. Sj\"ostrand, \emph{Analyse semi-classique pour l'\'equation de Harper. II: Comportement semi-classique pr\`es d'un rationnel}, M\'em. Soc. Math. France (N.S.) No. 40 (1990).

\bibitem{Howe}
\textsc{R.~E.~Howe},
On representations of discrete, finitely generated,
torsion-free nilpotent groups,
\emph{Pacific J. Math.} \textbf{73} (1977), no.~2, 281--305.


\bibitem{Johnston}
\textsc{C.~P.~Johnston},
On a Plancherel formula for certain discrete, finitely generated,
torsion-free nilpotent groups,
\emph{Pacific J. Math.} \textbf{167} (1995), no.~2, 313--326.

\bibitem{Kirillov1962}
A. A. Kirillov, \emph{Unitary representations of nilpotent Lie groups}, Uspekhi Mat. Nauk \textbf{17} (1962), no. 4, 57--110.

\bibitem{Kirillov2004}
A. A. Kirillov, \emph{Lectures on the Orbit Method}, Graduate Studies in Mathematics, vol. 64, American Mathematical Society, Providence, RI, 2004.

\bibitem{Kirillov3}
\textsc{A. A. Kirillov},
Thoughts about George Mackey and his imprimitivity theorem,
Contemp. Math., 449,
American Mathematical Society, Providence, RI, 2008, 247--262.

\bibitem{Lubotzky}
\textsc{A. Lubotzky and Y. Shalom},
Finite representations in the unitary dual and Ramanujan groups,
in \textit{Discrete geometric analysis}, Contemp. Math. 347,
Amer. Math. Soc., Providence, RI, 2004, 173--189.

\bibitem{Mackey}
G. W. Mackey, \emph{Induced Representations of Groups and Quantum Mechanics}, W. A. Benjamin, New York, 1968.

\bibitem{MathaiMarcolli}
\textsc{V.~Mathai and M.~Marcolli},
Twisted index theory on good orbifolds, I: Noncommutative Bloch theory,
\emph{Commun. Contemp. Math.} \textbf{1} (1999), no.~4, 553--587.

\bibitem{Mautner}
F. I. Mautner, \emph{The structure of the regular representation of certain discrete groups}, Duke Math. J. \textbf{17} (1950), 437--441.

\bibitem{Pytlik}
\textsc{T.~Pytlik},
A Plancherel measure for the discrete Heisenberg group,
\emph{Colloq. Math.} \textbf{42} (1979), 355--359.

\bibitem{RammalBellissard}
R. Rammal and J. Bellissard, \emph{An algebraic semiclassical approach to Bloch electrons in a magnetic field}, J. Phys. France \textbf{51} (1990), 1803--1830.

\bibitem{Sakurai}
J. J. Sakurai and J. Napolitano, \emph{Modern Quantum Mechanics}, 2nd ed., Addison--Wesley, San Francisco, 2011.

\bibitem{Segal}
I. E. Segal, \emph{An extension of Plancherel's formula to separable unimodular groups}, Ann. of Math. (2) \textbf{52} (1950), 272--292.

\bibitem{ThomaRegulareMass}
E. Thoma, \emph{\"Uber das regul\"are Ma\ss{} im dualen Raum diskreter Gruppen}, Math. Z. \textbf{100} (1967), 257--271.

\bibitem{ThomaUnitary}
E. Thoma, \emph{\"Uber unit\"are Darstellungen abz\"ahlbarer, diskreter Gruppen}, Math. Ann. \textbf{153} (1964), 111--138.

\bibitem{Vergne}
\textsc{M.~Vergne},
Construction de sous-alg\`ebres subordonn\'ees
\`a un \'element du dual d'une alg\`ebre de Lie r\'esoluble,
\emph{C. R. Acad. Sci. Paris} S\'er.~A--B \textbf{270} (1970),
A173--175.

\bibitem{Wilkinson}
\textsc{M.~Wilkinson},
An example of phase holonomy in WKB theory,
\emph{J. Phys. A} \textbf{17} (1984), 3459--3476.


\end{thebibliography}
\end{document}